\documentclass[12pt,reqno] {amsart}
\usepackage{amsfonts,amssymb,amsmath,amsthm}
\usepackage{mathrsfs}
\usepackage{url}
\usepackage{enumerate}
\usepackage{xcolor}
\usepackage{todonotes}

\urlstyle{sf}
\newtheorem{theorem}{Theorem}[section]
\newtheorem*{theorem*}{Theorem}
\newtheorem{lemma}[theorem]{Lemma}

\newtheorem{prop}[theorem]{Proposition}
\newtheorem{cor}[theorem]{Corollary}
\theoremstyle{definition}

\newtheorem{rem}[theorem]{Remark}

\newtheorem{Question}[theorem]{Question}

\voffset = -32pt \hoffset = -64pt \textwidth = 16.5cm
\textheight = 21cm

\author[S. Ghara]{Soumitra Ghara}
\author[R. Gupta]{Rajeev Gupta}
\author[M. R. Reza]{Md. Ramiz Reza}

\address[S. Ghara]{Department of Mathematics and Statistics\\
Indian Institute of Technology Kanpur, India
}

\address[R. Gupta]{School of Mathematics and Computer Science\\
Indian Institute of Technology Goa, India}

\address[Md. R. Reza]{Department of Mathematics and Statistics\\
Indian Institute of Technology Kanpur, India
 }

\email[S. Ghara]{ghara90@gmail.com}
\email[R. Gupta]{rajeev@iitgoa.ac.in}
\email[M. R. Reza]{ramiz.md@gmail.com}

\thanks{The first author was supported by the SERB through the NPDF post-doctoral fellowship (Ref.No. PDF/2019/002724). The second author was supported through the INSPIRE faculty grant (Ref. No. DST/INSPIRE/04/2017/002367) and the third author was supported by the institute post-doctoral fellowship of IIT Kanpur.}

\keywords{$m$-isometry, $m$-concave operators, wandering subspace property, Wold-type decomposition, Dirichlet-type spaces}
\subjclass[2010]{Primary 46E20, 47B32, 47B38,  Secondary 47A50, 31C25}

\begin{document}
\title [Analytic $m$-isometries and weighted Dirichlet-type spaces]{Analytic $m$-isometries and weighted Dirichlet-type spaces}
\begin{abstract}
Corresponding to any $(m-1)$-tuple of semi-spectral measures on the unit circle, a weighted Dirichlet-type space is introduced and studied. We prove that 
every analytic $m$-isometry which satisfies a certain set of operator inequalities can be represented as the  operator of  multiplication by the coordinate function on such a weighted Dirichlet-type space. This extends a result of Richter as well as of Olofsson on analytic $2$-isometries. 
We also prove that all left invertible
$m$-concave operators satisfying the aforementioned operator inequalities admit a Wold-type decomposition.
This result serves as a key ingredient in our model theorem and it also generalizes a result of Shimorin on a class of $3$-concave operators.
\end{abstract}


\maketitle

\section{Introduction}
In what follows, $\mathcal H$ and $\mathcal E$ will denote  complex separable Hilbert spaces. The notations $\mathcal B(\mathcal H)$ and $\mathcal B(\mathcal E)$ will stand for the set of all bounded linear operators on $\mathcal H$ and $\mathcal E$ respectively. Let $m$ be a positive integer. An operator $T$ in $\mathcal B(\mathcal H)$ is said to be an {\it $m$-isometry} (resp. {\it $m$-concave}) if $\beta_m(T) = 0$ (resp. $\beta_m(T) \leqslant 0$), where the {\it $m$-th 
defect operator} $\beta_m(T)$ is defined by
\begin{eqnarray*}
\beta_m(T):= \sum\limits_{j=0}^m \binom{m}{j}(-1)^{m-j}{T^*}^jT^j.
\end{eqnarray*}
Further, we set $\beta_0(T)=I.$ An operator $T$ in $\mathcal B(\mathcal H)$ is said to be {\it expansive} ({\it norm-increasing}) if $\beta_1(T)\geqslant 0.$ The notion of $m$-isometric operators was introduced and studied by Agler in \cite{Agler}. Later in a series of papers, Agler and Stankus studied the class of $m$-isometries extensively, see \cite{AglerStan1, AglerStan2, AglerStan3}. An operator $T$ in $\mathcal B(\mathcal H)$ is called {\it analytic} if $\mathcal H_{\infty}(T)= \{0\},$ where $\mathcal H_{\infty}(T),$ the {\it hyper-range} of $T,$ is defined to be $\mathcal H_{\infty}(T) := \bigcap_{n=0}^{\infty} T^n(\mathcal H).$ 

Note that a $1$-isometry is simply an isometry on a Hilbert space. 
The classical Wold-Kolmogorov decomposition theorem says that an isometry can be uniquely written as the direct sum of a unitary operator and an analytic isometry, see \cite[Theorem 1.1]{NF}. More generally, it follows from \cite[Proposition 3.4]{Shimorin} that every expansive $m$-isometry can be uniquely written as the direct sum of a unitary operator and an  analytic $m$-isometry. Thus the study of analytic $m$-isometries are natural starting point in order to investigate expansive $m$-isometries.

Let $H^2(\mathcal E)$ denote the Hardy space of $\mathcal E$-valued holomorphic functions on the open unit disc $\mathbb D$, that is,
\begin{align*}
H^2(\mathcal E) = \Big\{\textstyle\sum\limits_{j=0}^{\infty}a_jz^j : a_j \in\mathcal E, \sum\limits_{j=0}^{\infty}\|a_j\|^2_{_{\mathcal E}} < \infty  \Big\}. 
\end{align*} 
The operator $M_z$ of \emph{multiplication by the coordinate function} on $H^2(\mathcal E)$ is an analytic isometry. 
From the  Wold-Kolmogorov decomposition theorem, it follows that every analytic isometry $T$ in $\mathcal B(\mathcal H)$ is unitarily equivalent to the operator $M_z$ on the Hardy space $H^2(\mathcal E)$ with $\mathcal E = \ker T^*$ (cf. \cite[Theorem 1.1]{NF}). 
This is known as the model theorem for analytic isometries.

In 1991, Richter \cite[Theorem 3.7, Theorem 5.1]{R} proved that any cyclic analytic $2$-isometry is unitarily equivalent to the operator of multiplication by the coordinate function on a {\emph{Dirichlet-type space $D(\mu)$}} for some finite positive Borel measure $\mu$ on the \emph{unit circle} $\mathbb T,$ where 
$$D(\mu):= \left\{f\in \mathcal O(\mathbb D) : \int_{\mathbb D}|f^{\prime}(z)|^2 P_{\mu}(z) \,dA(z) < \infty\right\},$$
$P_{\mu}(z)$ denotes the \emph{Poisson integral} of the measure $\mu,$ $dA(z)$ denotes the \emph{normalized Lebesgue area measure} on the \emph{open unit disc} $\mathbb D$  and $\mathcal O(\mathbb D)$ denotes the space of complex valued holomorphic functions on the open unit disc $\mathbb D.$ 
Olofsson generalized this result of Richter and obtained a model for an arbitrary analytic $2$-isometry by considering weighted Dirichlet-type spaces $D(\mu)$ associated to a positive $\mathcal B(\mathcal E)$-valued operator measure $\mu$ on $\mathbb T,$ see \cite[Theorem 3.1 and 4.1]{OLLOF2iso}.

Recently, Rydhe characterized the class of cyclic $m$-isometries in terms of shifts on abstract spaces of weighted Dirichlet-type. It was shown that every cyclic $m$-isometry is unitarily equivalent to the multiplication operator by the coordinate function on a Hilbert space $\mathcal D^2_{\overrightarrow{\mu}}$  (depending on the operator as well as on the cyclic vector chosen), induced by an \emph{allowable} $m$-tuple $\overrightarrow{\mu}= (\mu_0,\ldots,\mu_{m-1})$ of distributions on the unit circle $\mathbb T$, see \cite[Theorem 3.1]{Rydhe}. 
Motivated by Rydhe's model, in this article, we attempt to find Richter-type model as well as Olofsson-type model for a class of analytic $m$-isometries. In case of $m=2,$ this class coincides with the class of all analytic $2$-isometries.

Let $\mathcal E$ be a Hilbert space and $\mathcal O(\mathbb D,\mathcal E)$ denote the space of $\mathcal E$-valued holomorphic functions on $\mathbb D.$ By a $\mathcal B(\mathcal E)$-valued \emph{semi-spectral measure} on the unit circle $\mathbb T,$ we mean a positive $\mathcal B(\mathcal E)$-valued operator measure on $\mathbb T$, see \cite[p. 719]{OLLOF2iso} for definition and basic properties. The notation $\mathcal M_{+}(\mathbb T,\mathcal B(\mathcal E))$ stands for the set of all $\mathcal B(\mathcal E)$-valued \emph{semi-spectral measures} on $\mathbb T$. We will simply write semi-spectral measure in place of $\mathcal B(\mathcal E)$-valued \emph{semi-spectral measure} whenever there is no possibility of ambiguity about the underlying space $\mathcal E$. The set $\mathcal M_+(\mathbb T, \mathcal B(\mathbb C))$ coincides with the set of all finite positive Borel measures on $\mathbb T$ and is abbreviated to $\mathcal M_+(\mathbb T).$ For a  $\mu\in \mathcal M_{+}(\mathbb T,\mathcal B(\mathcal E)),$ $f\in \mathcal O(\mathbb D,\mathcal E)$ and  a positive integer $n,$ we consider $D_{\mu,n}(f),$ the {\it weighted Dirichlet integral of order $n$,}  defined by 
\begin{align*}
D_{\mu,n}(f):= \frac{1}{n! (n-1)!}\int_{\mathbb D} \big\langle P_{\!\mu}(z)f^{(n)}(z), f^{(n)}(z)\big\rangle (1-|z|^2)^{n-1}dA(z),
\end{align*}
where $f^{(n)}$ denotes the $n$-th order complex derivative of the function $f,$ that is, $f^{(n)}(z):= \frac{d^n}{dz^n}f(z),$ and $P_{\mu}(z)$ denotes the \emph{Poisson integral} of  the semi-spectral measure $\mu$. In the case of $\mathcal E= \mathbb C,$ the integral $D_{\mu,n}(\cdot)$ considered here coincides with the notion of {weighted Dirichlet integral of order $n$} as described in \cite{Rydhe}. The novelty here lies in considering the associated semi-inner product space $\mathcal H_{\mu,n}(\mathcal E)$ and the Hilbert space $\mathcal H_{\pmb\mu}(\mathcal E)$ defined below.

For any $\mathcal B(\mathcal E)$-valued semi-spectral measure $\mu$ on $\mathbb T$ and any positive integer $n,$ we associate a natural linear space $$\mathcal H_{\mu,n}(\mathcal E) := \{f\in \mathcal O(\mathbb D,\mathcal E): D_{\mu,n}(f) < \infty\}.$$
It is routine to verify that $\mathcal H_{\mu,n}(\mathcal E)$  is a semi-inner product space with respect to the semi-inner product 
\begin{align*}
\big\langle f, g\big\rangle = \frac{1}{n! (n-1)!}\int_{\mathbb D} \big\langle P_{\!\mu}(z)f^{(n)}(z), g^{(n)}(z)\big\rangle (1-|z|^2)^{n-1}dA(z),\,\,\,\,\,f , g\in \mathcal H_{\mu,n}(\mathcal E).
\end{align*}
The space $\mathcal H_{\mu,1}(\mathbb C),$ that is, the case of $n=1$ and $\mathcal E=\mathbb C,$ has been studied extensively in the literature, see for instance \cite{R, RS, Aleman, Sara}, \cite[Ch.7]{Primer} and references therein. The properties of the space $\mathcal H_{\mu,1}(\mathcal E)$ has been discussed in \cite{OLLOF2iso}. In Section 2 of this article, we explore various properties of the higher order weighted Dirichlet integrals $D_{\mu,n}(\cdot)$ and the associated spaces $\mathcal H_{\mu,n}(\mathcal E).$

One of the interesting properties of the functions in $\mathcal H_{\mu,n}(\mathcal E)$ is that these can be approximated  through $r$-dilations. For $0 < r< 1,$  the $r$-dilation of a function  $f\in\mathcal O(\mathbb D,\mathcal E)$ is denoted by $f_r$ and is defined by $f_r(z) := f(rz),\,z\in\mathbb D.$  
In Section 3 of this article, we show that $D_{\mu,n}( f_r-f)\to 0$ as $r\to 1,$ for every $f\in \mathcal H_{\mu,n}(\mathcal E).$ In case $\mathcal E=\mathbb C$ and $\mu$ is a finite  positive Borel measure in $\mathcal M_+(\mathbb T),$ Richter and Sundberg established the approximation through $r$-dilations 
by showing that, for every $f\in\mathcal H_{\mu,1}(\mathbb C)$ and $0 < r < 1,$ $D_{\mu,1}( f_r)\leqslant C D_{\mu,1}(f)$ with constant $C=4,$ see \cite[Theorem 5.2]{RS}.  Aleman improved this result by replacing the constant $4$ by $5/2$ (cf. \cite[Lemma 4.1]{Aleman}). This remained the best value known for the constant until Sarason, in \cite[Proposition 3]{Sara}, proved  that $D_{\mu,1}(f_r) \leqslant D_{\mu,1}(f)$
for every $f\in \mathcal H_{\mu,1}(\mathbb C),$ and for every $0 <r <1,$ see also \cite[Lemma 7.3.2]{Primer}.  In Section 3 using Theorem \ref{Radial domination}, we show that the result of Sarason remains true even for the functions in $\mathcal H_{\mu,n}(\mathcal E),$ where $n$ is an arbitrary positive integer and $\mathcal E$ is an arbitrary complex separable Hilbert space.  

\begin{theorem}\label{Radial domination}
Let $\mathcal E$ be a complex separable Hilbert space and $\mu \in \mathcal M_+(\mathbb T,\mathcal B(\mathcal E)).$ Then for every positive integer $n$ and  $f\in \mathcal H_{\mu,n}(\mathcal E),$  we have 
\begin{align*}
D_{\mu,n}(f_r) \leqslant  D_{\mu,n}(f),\,\,\,\,\,\,\,\, 0 < r< 1,  
\end{align*}
 and consequently $D_{\mu,n}(f_r-f)\to 0$ as $r\to 1.$
\end{theorem}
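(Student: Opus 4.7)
The proof has two pieces: the norm inequality $D_{\mu,n}(f_r) \leq D_{\mu,n}(f)$, which is the substantive content, and the convergence $D_{\mu,n}(f_r-f) \to 0$, which will follow by a standard Hilbert-space argument. Granting the inequality, the family $\{f_r\}_{r \in (0,1)}$ is bounded in the $D_{\mu,n}^{1/2}$-seminorm; since $f_r^{(n)} \to f^{(n)}$ pointwise on $\mathbb D$ (and uniformly on compact subdiscs), a Fatou-type estimate gives $\liminf_{r \to 1^-} D_{\mu,n}(f_r) \geq D_{\mu,n}(f)$, so combined with the reverse bound one gets $D_{\mu,n}(f_r) \to D_{\mu,n}(f)$; polarization together with pointwise convergence on compact subdiscs and a Cauchy--Schwarz tail estimate near $\partial \mathbb D$ then turns this into $D_{\mu,n}(f_r - f) \to 0$.

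For the inequality itself, the plan is to reduce the operator-valued setting to a scalar pointwise estimate. By Naimark's dilation theorem, write $\mu = V^* E V$ for an isometry $V : \mathcal E \to \mathcal K$ and a projection-valued measure $E$ on $\mathcal K$; since $(Vf)^{(n)} = V f^{(n)}$ and $V(f_r) = (Vf)_r$, one has $D_{\mu,n}(f) = D_{E,n}(Vf)$ and the problem transfers to $E$. Spectrally decomposing $\mathcal K = \int^\oplus_{\mathbb T} \mathcal K_\zeta\, d\lambda(\zeta)$ so that $E(B)$ acts as multiplication by $\mathbf 1_B$, one has $\langle P_E(z) \xi, \xi\rangle_{\mathcal K} = \int_{\mathbb T} P(z,\zeta)\|\xi(\zeta)\|^2_{\mathcal K_\zeta}\, d\lambda(\zeta)$, hence by Fubini
\begin{equation*}
D_{E,n}(F) = \int_{\mathbb T} D_{\zeta,n}\bigl(F(\cdot)(\zeta)\bigr)\, d\lambda(\zeta),
\end{equation*}
where $D_{\zeta,n}(h) = \frac{1}{n!(n-1)!}\int_{\mathbb D}\|h^{(n)}(z)\|^2 P(z,\zeta)(1-|z|^2)^{n-1}\, dA(z)$ is the local Dirichlet integral of order $n$ at $\zeta$. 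Because $r$-dilation commutes both with passing to fibers and with expansion in an orthonormal basis of $\mathcal K_\zeta$, the inequality reduces to the scalar pointwise statement $(\ast)$: for every $\zeta \in \mathbb T$, every $g \in \mathcal O(\mathbb D)$, and every $r \in (0,1)$, $D_{\zeta,n}(g_r) \leq D_{\zeta,n}(g)$.

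The pointwise inequality $(\ast)$ is the technical heart of the argument. For $n=1$ it is Sarason's classical theorem, whose proof rests on the boundary identity $D_{\zeta,1}(g) = \int_{\mathbb T}|q(w)|^2 |dw|/(2\pi)$ with $q(w) = (g(w)-g(\zeta))/(w-\zeta)$, the scaling $q_r(w) = r\, q(rw)$, and contractivity of dilation on $H^2$. For general $n$, the plan is to first invoke the series representation (derivable from the Poisson--Fourier expansion of $P(z,\zeta)$, and presumably developed in Section~2)
\begin{equation*}
D_{\zeta,n}(g) = \sum_{j \geq n} \binom{j-1}{n-1} |R_j(\zeta)|^2, \qquad R_j(w) := \sum_{k \geq j} a_k w^k.
\end{equation*}
Together with the identity $R_j[g_r](\zeta) = R_j[g](r\zeta)$ and a rotation reducing to $\zeta = 1$, $(\ast)$ becomes positive-semidefiniteness of the scalar Gram-type matrix $\bigl(\binom{\min(k,l)}{n}(1-r^{k+l})\bigr)_{k,l \geq n}$. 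The plan is then to mimic Sarason's architecture by constructing, out of the remainders $R_j$, an auxiliary vector-valued holomorphic function whose squared norm realizes $D_{\zeta,n}(g)$ and which transforms contractively under $r$-dilation, thereby reducing the matrix inequality to contractivity of a dilation on a suitable vector-valued Hardy space. The hardest step is identifying the correct higher-order Sarason transform; the naive candidate $(g-T_{n-1}(\zeta;g))/(w-\zeta)^n$ does not scale properly, so a more refined construction is required. Once $(\ast)$ is in hand, the Naimark and direct-integral reductions glue the pieces together, and the convergence conclusion follows as described.
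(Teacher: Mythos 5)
Your proposal has a genuine gap at its core. The reductions you set up (Naimark dilation, direct-integral decomposition into local integrals $D_{\zeta,n}$, the series $D_{\zeta,n}(g)=\sum_{j\geqslant n}\binom{j-1}{n-1}|R_j(\zeta)|^2$, and the equivalence of $(\ast)$ with positive semi-definiteness of $\bigl(\binom{\min(k,l)}{n}(1-r^{k+l})\bigr)_{k,l\geqslant n}$) are all plausible, but they only relocate the difficulty: the matrix positivity for $n\geqslant 2$ is exactly the theorem in disguise, and you explicitly concede that you do not have the "higher-order Sarason transform" needed to prove it. Note that the obvious decomposition $\binom{\min(k,l)}{n}=\sum_{j\geqslant n}\binom{j-1}{n-1}\mathbf 1_{\{j\leqslant k\wedge l\}}$ does not help, since the individual blocks $\bigl((1-r^{k+l})\bigr)_{k,l\geqslant j}$ are not positive semi-definite (test against $c_j=1$, $c_{j+1}=-1$). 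So the substantive content of the theorem is asserted, not proved.

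For contrast, the paper takes a different and more economical route that bypasses the local-integral reduction entirely. The base case $n=1$ is done by observing that $A=\bigl((k\wedge l)\hat\mu(l-k)\bigr)$ is formally positive and annihilated by $(\sigma^*-I)^2$ (a consequence of the difference identity $D_{\mu,1}(z^2p)-2D_{\mu,1}(zp)+D_{\mu,1}(p)=0$), so that a lemma of Shimorin yields formal positivity of $\bigl((1-r^{k+l})A_{k,l}\bigr)$; this is precisely the hypothesis that fails for $n\geqslant 2$, where one only has $(\sigma^*-I)^{n+1}A=0$. The induction step then uses the identity $\sum_{k\geqslant 1}D_{\mu,n}(L^kf)=D_{\mu,n+1}(f)$ (Lemma \ref{backward difference identity vector valued}) together with $L^kf_r=r^k(L^kf)_r$, so that each summand for $f_r$ is $r^{2k}D_{\mu,n}\bigl((L^kf)_r\bigr)\leqslant D_{\mu,n}(L^kf)$ by the induction hypothesis. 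If you want to complete your plan, you would essentially have to rediscover this mechanism (or an equivalent one) at the level of the matrices $\bigl(\binom{\min(k,l)}{n}(1-r^{k+l})\bigr)$. Your treatment of the final convergence statement (Fatou plus convergence of seminorms, versus the paper's parallelogram identity plus Fatou) is fine modulo the inequality.
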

 In Section 4, for $m\geqslant 2,$ we consider $(m-1)$-tuple $\pmb\mu= (\mu_1,\ldots,\mu_{m-1})$ of $\mathcal B(\mathcal E)$-valued semi-spectral measures on $\mathbb T$ and then we introduce the corresponding \emph{weighted Dirichlet-type space} $\mathcal H_{\pmb \mu}(\mathcal E)$ of $\mathcal E$-valued holomorphic functions in the following way : 
\begin{align*}
\mathcal H_{\pmb\mu} (\mathcal E):= \bigcap\limits_{j=1}^{m-1} \mathcal H_{\mu_j,j}(\mathcal E)\bigcap H^2({\mathcal E})
\end{align*}
with the norm $\|\cdot\|_{\pmb \mu}$ given by
\begin{align*}
\|f\|^2_{\pmb \mu}= \|f\|^2_{\!_{H^2(\mathcal E)}}+ \sum\limits_{j=1}^{m-1} D_{\mu_j,j}(f),\quad f\in \mathcal H_{\pmb \mu}(\mathcal E).
\end{align*}
The space $\mathcal H_{\pmb \mu}(\mathcal E)$ with respect to the norm $\|\cdot\|_{\pmb\mu}$ is shown to be a reproducing kernel Hilbert space. 
We also show that the multiplication operator $M_z$ on the Hilbert space $\mathcal H_{\pmb \mu}(\mathcal E)$ is a bounded analytic $m$-isometry, see Theorem \ref{bounded m iso vector valued}. It is easily noted that the set of $\mathcal E$-valued polynomials is contained in $\mathcal H_{\pmb \mu}(\mathcal E).$ It is therefore natural to ask:
For $m\geqslant 2$ and  an $(m-1)$-tuple $\pmb \mu= (\mu_1,\ldots,\mu_{m-1})$ of $\mathcal B(\mathcal E)$-valued semi-spectral measures on $\mathbb T,$ is the set of $\mathcal E$-valued polynomials dense in $\mathcal H_{\pmb \mu}(\mathcal E)?$
In case $m=2,$ the Hilbert space $\mathcal H_{\pmb \mu}(\mathcal E)$ coincides with the model space $D(\mu_1),$ described by Olofsson in \cite{OLLOF2iso}, and an affirmative answer to the question is known in this case, see \cite[Corollary 3.1]{OLLOF2iso}.
Using Theorem \ref{Radial domination}, we answer this question in affirmative in general, see Proposition \ref{polydense through dilation}. 
In Theorem \ref{defect formula for model space vector valued}, we show that the operator $M_z$ on $\mathcal H_{\pmb \mu}(\mathcal E)$ is an analytic $m$-isometry satisfying the following set of operator inequalities: 
\begin{align*}
\beta_r(M_z)\geqslant \sum_{n=1}^{\infty} {L_{M_z}^*}^{\!\!\!\!\!n}\,\beta_{r+1}({M_z})L_{M_z}^n,\quad r=1,\ldots,m-2,
\end{align*} 
where for a left invertible operator $T$ in $\mathcal B(\mathcal H),$ we use the notation $L_T$ to denote the left inverse $(T^*T)^{-1}T^*$ of $T$. This set of operator inequalities turns out to be a characteristic property of an analytic $m$-isometry to represent it as the multiplication operator $M_z$ on $\mathcal H_{\pmb \mu}(\mathcal E)$ for some $(m-1)$-tuple $\pmb \mu = (\mu_1,\ldots,\mu_{m-1})$ of $\mathcal B(\mathcal E)$-valued semi-spectral measures on $\mathbb T.$ In Section 6, we establish the following model theorem which is one of the key findings of this article .
\begin{theorem}\label{model thm}
Let $m\geqslant 2$ and $T\in \mathcal B(\mathcal H).$ Then $T$ is an analytic $m$-isometry satisfying 
\begin{align}\label{mu-positive}
\beta_r(T)\geqslant \displaystyle\sum_{k=1}^{\infty} {L_T^*}^k\beta_{r+1}(T) L_T^k, \quad r=1,\ldots, m-2 
\end{align}
if and only if $T$ is unitarily equivalent to the multiplication operator $M_z$ on $\mathcal H_{\pmb \mu}(\mathcal E)$ for some $(m-1)$-tuple $\pmb \mu =(\mu_1,\ldots,\mu_{m-1})$  of $\mathcal B(\mathcal E)$-valued semi-spectral measures on $\mathbb T$ with $\mathcal E= \ker T^*.$  
\end{theorem}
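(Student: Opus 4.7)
One direction of the biconditional---that $M_z$ on $\mathcal H_{\pmb\mu}(\mathcal E)$ is an analytic $m$-isometry satisfying (\ref{mu-positive})---follows directly from Theorem \ref{defect formula for model space vector valued}. All of the work is therefore in the converse: starting from $T$, build the tuple $\pmb\mu$ and exhibit the unitary equivalence. My plan is to combine the Wold-type decomposition announced in the introduction, a Shimorin-type analytic function model, and an operator-valued Herglotz--Bochner construction of the semi-spectral measures, and then match the two Hilbert-space norms by reducing to polynomials and invoking Theorem \ref{Radial domination}.

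First, put $T$ in a concrete function model. Since $T$ is an analytic $m$-isometry satisfying (\ref{mu-positive}), it is left invertible and $m$-concave, so the Wold-type decomposition of the paper applies; analyticity kills the unitary summand and leaves the wandering subspace property $\mathcal H = \bigvee_{n \geq 0} T^n \mathcal E$ with $\mathcal E = \ker T^*$. The Shimorin map
\[U : \mathcal H \to \mathcal O(\mathbb D, \mathcal E), \qquad (Uh)(z) = \sum_{n \geq 0} \bigl(P_{\mathcal E} L_T^n h\bigr) z^n\]
is then an injective intertwiner between $T$ and $M_z$, and transporting the inner product from $\mathcal H$ to $\widetilde{\mathcal H} := U(\mathcal H)$ realizes $T$ as $M_z$ on a reproducing kernel Hilbert space of $\mathcal E$-valued holomorphic functions on $\mathbb D$ that contains every $\mathcal E$-constant.

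The central step is to produce $\pmb\mu = (\mu_1, \ldots, \mu_{m-1})$ from the defect operators of $T$. For each $r = 1, \ldots, m-1$ and each $e, e' \in \mathcal E$ I would form a two-sided scalar sequence $s^{(r)}_n(e, e')$ from $\beta_r(T)$ and iterates of $L_T, L_T^*$---essentially the Toeplitz data of $\beta_r(T)$ on the wandering orbit of $\mathcal E$---and verify, using the recursion $T^* \beta_r(T) T = \beta_r(T) + \beta_{r+1}(T)$, the identity $\beta_m(T) = 0$, and the inequalities (\ref{mu-positive}), that the resulting $\mathcal B(\mathcal E)$-valued sequence is positive definite on $\mathbb Z$. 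The operator-valued Herglotz--Bochner theorem then furnishes a unique $\mu_r \in \mathcal M_+(\mathbb T, \mathcal B(\mathcal E))$ having this sequence as its Fourier moments. The inequalities (\ref{mu-positive}) are genuinely essential here, since they supply exactly the positivity needed at each level $r$ for such a semi-spectral measure to exist.

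It remains to identify $\widetilde{\mathcal H}$ with $\mathcal H_{\pmb\mu}(\mathcal E)$. For an $\mathcal E$-valued polynomial $p$, the norm $\|p\|_{\widetilde{\mathcal H}}^2$ can be expanded explicitly in terms of $\langle \beta_r(T) \cdot, \cdot\rangle$ applied to the Taylor coefficients of $p$, while $\|p\|_{\pmb\mu}^2$ expands in terms of the Fourier moments of the $\mu_r$; by construction these two expansions agree term by term, so the two norms coincide on polynomials. Polynomial density in $\mathcal H_{\pmb\mu}(\mathcal E)$ is Proposition \ref{polydense through dilation}, and the same $r$-dilation argument, using the $m$-isometric structure of $M_z$ on $\widetilde{\mathcal H}$, yields polynomial density in $\widetilde{\mathcal H}$ as well. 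Hence $\widetilde{\mathcal H} = \mathcal H_{\pmb\mu}(\mathcal E)$ with equal norms, completing the proof. The main obstacle I expect is the middle step: arranging the sequence $s^{(r)}_n$ so that its positive definiteness is equivalent to the inequality (\ref{mu-positive}) at level $r$ and so that the resulting $D_{\mu_r, r}(p)$ is exactly the order-$r$ contribution to $\|p\|_{\widetilde{\mathcal H}}^2$; once those two requirements fit together, the rest is density and polynomial bookkeeping.
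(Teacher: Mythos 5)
Your overall architecture matches the paper's: wandering subspace property from Theorem \ref{Wold-type-mconcave}, semi-spectral measures from an operator-valued Herglotz--Bochner/Naimark construction, and a norm comparison on polynomials followed by density (the backward direction is fine; the paper cites Theorem \ref{bounded m iso vector valued} together with Theorem \ref{defect formula for model space vector valued}, since the former supplies boundedness, analyticity and the $m$-isometry property). But the step you yourself flag as ``the main obstacle'' is a genuine gap, and the candidate you name for the moment data would fail as stated. You propose to take ``the Toeplitz data of $\beta_r(T)$ on the wandering orbit,'' i.e.\ the numbers $\langle\beta_r(T)T^le,T^je'\rangle$. For $1\leqslant r\leqslant m-2$ these are \emph{not} Toeplitz: the recursion is $T^*\beta_r(T)T=\beta_r(T)+\beta_{r+1}(T)$, so $\langle\beta_r(T)T^le,T^je'\rangle$ is not a function of $l-j$ alone (only $\beta_{m-1}(T)$ is $T$-Toeplitz, because $\beta_m(T)=0$). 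Consequently no single semi-spectral measure has these as its Fourier moments, and Herglotz--Bochner cannot be applied to that sequence.

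The missing idea is to apply the measure construction not to $\beta_r(T)$ but to
\[
A_r:=\beta_r(T)-\sum_{k=1}^{\infty}{L_T^*}^k\beta_{r+1}(T)L_T^k .
\]
Using $L_TT=I$ and the recursion above one checks $T^*A_rT=A_r$, so $\langle A_rT^lx,T^jy\rangle$ depends only on $l-j$; and $A_r\geqslant 0$ is precisely hypothesis \eqref{mu-positive} for $r\leqslant m-2$ (and $m$-concavity via Lemma \ref{m-concaveop} for $r=m-1$). This is exactly what Lemma \ref{measure construction vector valued} needs, and it produces $\mu_r$ with $\langle A_rT^lx,T^jy\rangle=\int_{\mathbb T}\zeta^{l-j}d(\mu_r)_{x,y}(\zeta)$. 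The second half of your obstacle --- that $D_{\mu_r,r}(p)$ be ``exactly the order-$r$ contribution'' to the norm --- is then resolved not by a direct Taylor-coefficient expansion but by identity \eqref{Q-j and Q_j+1 vector valued} of Theorem \ref{defect formula for model space vector valued}, which says that on the model side the operator $\beta_r(M_z)-\sum_k {L_{M_z}^*}^k\beta_{r+1}(M_z)L_{M_z}^k$ has moment data $D_{\mu_r,0}$; a downward induction from $r=m-1$ to $r=1$ matches $\langle\beta_r(M_z)z^lx,z^jy\rangle$ with $\langle\beta_r(T)T^lx,T^jy\rangle$, and the case $r=1$ yields the Gram-matrix identity $\langle z^lx,z^jy\rangle=\langle T^lx,T^jy\rangle$ that makes your intertwiner a unitary. (Your extra dilation argument for density of polynomials in $\widetilde{\mathcal H}$ is unnecessary: the map already carries the dense subspace $\operatorname{span}\{T^n\mathcal E\}$ onto the $\mathcal E$-valued polynomials.)
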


Note that for $m=2,$ the operator inequalities in \eqref{mu-positive} are vacuously satisfied.
In case $\ker T^*$ is of dimension one in above theorem, the associated $(m-1)$-tuple of measures is uniquely determined. In the general case, the associated $(m-1)$-tuple of semi-spectral measures is determined upto conjugation by a unitary operator, see Proposition \ref{Uniqueness} for the details.

For the operator $M_z$ on $\mathcal H_{\pmb \mu}(\mathcal E),$ we note that $\ker  M_z^* = \mathcal E.$ Since the set of all $\mathcal E$-valued polynomials is dense in $\mathcal H_{\pmb \mu}(\mathcal E),$ it follows that the operator $M_z$ on $\mathcal H_{\pmb \mu}(\mathcal E)$ has the \emph{wandering subspace property}. Following \cite[Definition 2.4]{Shimorin}, we say that an operator $T$  in $\mathcal B(\mathcal H)$ has the wandering subspace property if it satisfies 
\begin{align*}
\bigvee \big\{T^nx:x\in\ker T^*, \, n\geqslant 0\big\} = \mathcal H.
\end{align*}
The term ``wandering subspace" is attributed to Halmos \cite{HAL}. 
In the literature, this property has been studied extensively, for instance see  \cite{Rinv, ARS,Shimorin,OLW,IZU1,
IZU2,IZU3,AST} and references therein. In view of Theorem \ref{model thm}, it is necessary that an analytic $m$-isometry $T$ with $m\geqslant 2,$ which satisfies the operator inequalities in  \eqref{mu-positive}, must have the wandering subspace property. In Section 5 of this article, we show that all analytic left invertible $m$-concave operators which satisfy the operator inequalities in \eqref{mu-positive} have the  wandering subspace property.
This plays a crucial role in proving Theorem \ref{model thm}.
\begin{theorem}\label{Wold-type-mconcave}
Let $T$ be an analytic left invertible $m$-concave operator in $\mathcal B(\mathcal H)$ for some $m \geqslant 2.$ If $T$ satisfies
\begin{eqnarray*}
\beta_r(T)\geqslant \sum_{k=1}^{\infty} {L_T^*}^k\beta_{r+1}(T) L_T^k, \quad r=1,\ldots, m-2, 
\end{eqnarray*}
then $T$ has the wandering subspace property.
\end{theorem}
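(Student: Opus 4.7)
Set $\mathcal{M} := \bigvee_{n \geqslant 0} T^n \ker T^*$ and $P := P_{\ker T^*}$. Left invertibility of $T$ gives the orthogonal decomposition $\mathcal{H} = \ker T^* \oplus T\mathcal{H}$; iterating this decomposition yields, for every $x \in \mathcal{H}$ and every $n \geqslant 1$,
\[
x = \sum_{k=0}^{n-1} T^k P L_T^k x + T^n L_T^n x,
\]
whose partial sum lies in $\mathcal{M}$. Consequently $\operatorname{dist}(x, \mathcal{M}) \leqslant \|T^n L_T^n x\|$ for every $n$, so the wandering subspace property reduces to proving that $\|T^n L_T^n x\| \to 0$ as $n \to \infty$ for every $x$. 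Note that $T^n L_T^n x \in T^n \mathcal{H}$ and analyticity of $T$ forces any weak limit point of this sequence to be zero, but we need the stronger norm vanishing.

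The main algebraic tool is the Pascal-type identity $\beta_{r+1}(T) = T^* \beta_r(T) T - \beta_r(T)$ (obtained from $\binom{r+1}{k} = \binom{r}{k} + \binom{r}{k-1}$), which iterates to
\[
T^{*n} \beta_r(T) T^n = \beta_r(T) + \sum_{k=0}^{n-1} T^{*k} \beta_{r+1}(T) T^k = \sum_{k=0}^{n} \binom{n}{k} \beta_{r+k}(T).
\]
For $r = m-1$, the $m$-concavity $\beta_m(T) \leqslant 0$ makes the sequence $T^{*n} \beta_{m-1}(T) T^n$ monotonically decreasing in $n$, so the scalar sequence $\langle \beta_{m-1}(T) T^n y, T^n y\rangle$ converges for every fixed $y$.

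The core argument is a backward induction on $r$ from $r = m-1$ down to $r = 0$ of the statement
\[
(\mathrm{P}_r) \qquad \lim_{n \to \infty} \langle \beta_r(T) T^n L_T^n x, T^n L_T^n x \rangle = 0, \quad x \in \mathcal{H}.
\]
Since $\beta_0(T) = I$, the case $r = 0$ is precisely $\|T^n L_T^n x\|^2 \to 0$, giving the wandering subspace property. The base case $r = m-1$ combines the monotonicity above with the iterated defect expansion applied at the vector $L_T^n x$, handling the two varying indices via the identity $TL_T = I - P$. For the inductive step, the hypothesis $\beta_r(T) \geqslant \sum_{k \geqslant 1} L_T^{*k} \beta_{r+1}(T) L_T^k$ rewrites the $\beta_r$-weighted quadratic form at $T^n L_T^n x$ as a controlled sum of $\beta_{r+1}$-weighted quadratic forms along re-indexed orbits, to which $(\mathrm{P}_{r+1})$ applies after manipulation with $L_T T = I$ and $TL_T = I - P$.

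The principal technical obstacle lies in this inductive step. The operator inequality involves an \emph{infinite} series, so strong convergence of the associated sums must be verified; and because $TL_T = I - P$ is only a projection rather than the identity, commuting $L_T$ past $T$ generates boundary contributions involving $P$ that must be absorbed into the controllable terms. This is precisely the generalisation of Shimorin's argument for $3$-concave operators in \cite{Shimorin}, which realises the single step $r = 1 \Rightarrow r = 0$; the extension to arbitrary $m \geqslant 2$ requires iterating the same device through the entire hierarchy of interlocking operator inequalities, with left invertibility ensuring throughout that all series encountered converge in the strong operator topology.
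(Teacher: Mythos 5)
Your reduction to the sequence $T^nL_T^nx$ is the right starting point, and the identity $x=\sum_{k=0}^{n-1}T^kPL_T^kx+T^nL_T^nx$ is exactly how the paper's proof begins. But from there the argument has two genuine problems. First, the goal you set yourself --- norm convergence $\|T^nL_T^nx\|\to 0$ --- is stronger than what is needed and stronger than what the paper proves. Since $\mathcal M=\bigvee_{n\geqslant 0}T^n(\ker T^*)$ is a norm-closed subspace it is weakly closed, so it suffices to produce a subsequence along which $T^{n_k}L_T^{n_k}x\to 0$ \emph{weakly}; then $x$ is the weak limit of $(I-T^{n_k}L_T^{n_k})x\in\mathcal M$ and hence lies in $\mathcal M$. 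You explicitly dismiss this route (``we need the stronger norm vanishing''), which commits you to a harder statement that your sketch does not establish.

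Second, and more seriously, the backward induction that is supposed to prove $(\mathrm P_0)$ is never carried out, and the inductive step as described cannot work: the hypothesis \eqref{mu-positive} is a \emph{lower} bound on $\beta_r(T)$, so evaluating it at $T^nL_T^nx$ yields $\langle\beta_r(T)T^nL_T^nx,T^nL_T^nx\rangle\geqslant(\cdots)$, which is useless for showing that this already nonnegative quantity tends to $0$; an upper bound is what is required, and no mechanism for producing one is indicated. The base case $r=m-1$ is likewise only asserted: monotonicity of $T^{*n}\beta_{m-1}(T)T^n$ controls $\langle\beta_{m-1}(T)T^ny,T^ny\rangle$ for a \emph{fixed} $y$, not for the moving vectors $y=L_T^nx$, and ``handling the two varying indices'' is precisely the unproved content. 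The paper's actual argument is quantitative and different: it combines ${T^*}^nT^n\leqslant\sum_{j=0}^{m-1}\binom{n}{j}\beta_j(T)$ (Lemma \ref{m-concaveop}) with the telescoped consequence $\sum_{n\geqslant r+1}\binom{n-1}{r}{L_T^*}^n\beta_{r+1}(T)L_T^n\leqslant I$ of \eqref{mu-positive} (Lemma \ref{Dm}, proved via the Hockey-stick identity) to obtain $\sum_{n}\tfrac1n\big(\|T^nL_T^nx\|^2-\|L_T^nx\|^2\big)\leqslant\big(\sum_{j=1}^{m-1}\tfrac1j\big)\|x\|^2$; divergence of the harmonic series then gives $\liminf\|T^nL_T^nx\|=\lim\|L_T^nx\|<\infty$, hence a bounded subsequence, a weak limit point which analyticity forces to be $0$, and the conclusion. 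None of this quantitative machinery appears in your proposal, and without it even the boundedness of a subsequence of $\{T^nL_T^nx\}$ --- needed for any weak-compactness argument --- is unjustified.
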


As a consequence of this theorem, it is obtained in Theorem \ref{mconcave new} that any left invertible $m$-concave operator satisfying \eqref{mu-positive} admits a Wold-type decomposition (see Section \ref{sec5} for definition).
For $m=3,$ we find that Theorem \ref{Wold-type-mconcave} improves the result in \cite[Corollary 3.10]{Shimorin}, see Remark \ref{remark strct containment vector valued} for details. 
Note that for $m=2,$ 
Theorem \ref{Wold-type-mconcave} gives us the wandering subspace property for every analytic $2$-concave operator, see \cite[Theorem 1]{Rinv}. Since both, the analyticity and the concavity property, of an operator $T$ in $\mathcal B(\mathcal H)$ remain unchanged under the restriction of $T$ to its invariant subspaces, it follows that for any $\mathcal B(\mathcal E)$-valued semi-spectral measure $\mu$ on $\mathbb T,$ $M_z|_{\mathcal W}$ has the wandering subspace property for every $M_z$ invariant subspace $\mathcal W$ of $\mathcal H_{\mu}(\mathcal E).$
At this point, it is natural to ask if $\pmb \mu = (\mu_1,\ldots,\mu_{m-1})$ is an $(m-1)$ tuple of $\mathcal B(\mathcal E)$-valued semi-spectral measures on $\mathbb T$ with $m\geqslant 3,$ whether $M_z|_{\mathcal W}$ has the wandering subspace property or not, for every $M_z$ invariant subspace $\mathcal W$ of $\mathcal H_{\pmb \mu}(\mathcal E).$ Unfortunately, we are not able to answer this question right now as the conditions \eqref{mu-positive} in Theorem \ref{Wold-type-mconcave} may not be stable under the restriction of $T$ to its invariant subspaces. We hope to discover the answer to this question in our future work.

\section{Properties of Weighted Dirichlet integral of order $n$}

The symbols $\mathbb Z, \mathbb N$ and $\mathbb Z_+$ will denote the set of all integers, positive integers and non-negative integers respectively. By a $\mathcal B(\mathcal E)$-valued {\it semi-spectral measure} $\mu$ on $\mathbb T,$ we mean a finitely additive set function from the Borel $\sigma$-algebra of $\mathbb T$  into the set of all positive operators in $\mathcal B(\mathcal E)$ such that $\mu_{x,y}(\cdot):=\langle{\mu(\cdot)x}, {y}\rangle$  defines a regular complex Borel measure on $ \mathbb T$ for every $x, y \in \mathcal E.$ 
As in the previous section, the notation $\mathcal M_+(\mathbb T, \mathcal B(\mathcal E))$ stands for the set of all $\mathcal B(\mathcal E)$-valued semi-spectral measures on $\mathbb T.$
For $\mu \in \mathcal M_+(\mathbb T, \mathcal B(\mathcal E)),$
consider the {\it Poisson integral} $P_{\!\mu}$ of $\mu$ given by
\begin{align*}
P_{\!\mu}(z):=\int_{ \mathbb T} P(z, \zeta) d\mu(\zeta), \quad z \in \mathbb D,
\end{align*}
where $P(z,\zeta)= \frac{1-|z|^2}{|z-\zeta|^2},$ $z\in\mathbb D,$ $\zeta\in\mathbb T$ is the {\it Poisson kernel} for the open unit disc $\mathbb D.$  Note that for each $z\in \mathbb D,$ $P_{\!\mu}(z)$ is a positive operator in $\mathcal B(\mathcal E)$  
and for every $x,y\in\mathcal E,$ the function $\langle P_{\!\mu}(z)x,y\rangle$ is the {\it Poisson integral} of the complex Borel measure $\mu_{x, y}$ on the unit circle $\mathbb T.$ Thus the map $z\mapsto \langle P_{\!\mu}(z)x,y\rangle$ is a complex valued harmonic function on $\mathbb D.$  
For a positive integer $n,$ consider the {\textit{weighted Dirichlet integral }}$D_{\mu,n}(f)$ {\textit{of order n}},  for $f\in\mathcal O(\mathbb D,\mathcal E)$ defined by
\begin{align*}
D_{\mu,n}(f)= 
\frac{1}{n!(n-1)!}\displaystyle\int_{\mathbb D}\big\langle P_{\!\mu}(z) f^{(n)}(z),f^{(n)}(z)\big\rangle (1-|z|^2)^{n-1}dA(z).
\end{align*} 
It will be also useful to consider the weighted Dirichlet integral $D_{\mu,0}(f)$ of order $0,$ given by 
\begin{align*}
D_{\mu,0}(f) = \lim_{R\to 1}\displaystyle\int_{\mathbb T}\big\langle P_{\!\mu}(R\zeta)f(R\zeta),f(R\zeta)\big\rangle d\sigma(\zeta),
\end{align*}
provided the limit exists, where $d\sigma$ denotes the {\it normalized arc length measure} on the unit circle $\mathbb T.$  We will see in Corollary \ref{Containment of Hmu vector valued} that $D_{\mu,0}(f) < \infty,$ that is, the corresponding limit exists and is finite, whenever $D_{\mu,n}(f) < \infty$  for some $n\in\mathbb N.$


The following lemma, a straightforward generalization of \cite[Lemma 3.2 and Proposition 3.4]{Rydhe} from the case of $\mathcal E= \mathbb C$ to an arbitrary complex separable Hilbert space $\mathcal E,$ provides a formula for computing  $D_{\mu,n}(f)$ for an arbitrary function $f$ in $\mathcal O(\overline{\mathbb D},\mathcal E),$ the space of all $\mathcal E$-valued functions holomorphic in some neighbourhood of the closed unit disc $\overline{\mathbb D}.$ This can be obtained by computing $D_{\mu,n}(f)$ for any $\mathcal E$-valued polynomial $f$ and then using the observation that the power series of every function $f$ in $\mathcal O(\overline{\mathbb D},\mathcal E)$ converges uniformly on the closed unit disc $\overline{\mathbb D},$ one gets the desired formula for every  $f$ in  $\mathcal O(\overline{\mathbb D},\mathcal E),$ cf.  \cite[Equation(3.2)]{OLLOF2iso}.  
The second part of the lemma gives a relationship among the weighted Dirichlet integrals $D_{\mu,n}(f)$ for every $f$ in $\mathcal O(\overline{\mathbb D},\mathcal E),$ which can be easily verified from the formula obtained for $D_{\mu,n}(\cdot)$ in the first part. 
\begin{lemma}\label{Rydhe diff vector valued}
Let $f\in\mathcal O(\overline{\mathbb D},\mathcal E)$ and $\mu$ be a $\mathcal B(\mathcal E)$-valued semi-spectral  measure on the unit circle $\mathbb T.$
\begin{itemize}
\item[\rm (i)]If $f(z)= \displaystyle\sum_{j=0}^{\infty}\hat{f}(j) z^j$ for $z\in\overline{\mathbb D}$ then
\begin{align*}
D_{\mu,n}(f)= \sum\limits_{k,l=n}^{\infty} \binom{k\wedge l}{n}\big\langle \hat{\mu}(l-k)\hat{f}(k),\hat{f}(l)\big\rangle,~~~~n\in \mathbb Z_+,
\end{align*} where $k\wedge l=\min\{k,l\},$  and $\hat{\mu}(j)= \displaystyle\int_{\mathbb T} \zeta^{-j} d\mu(\zeta)$ for every $j\in\mathbb Z.$ Moreover, the series on the right hand side is absolutely convergent.
\item[\rm (ii)]
The following identity holds:

\begin{align*}
D_{\mu,n+1}(zf)- D_{\mu,n+1}(f) = D_{\mu,n}(f),~~~~~~~~n\in \mathbb Z_+.
\end{align*}
\end{itemize}
\end{lemma}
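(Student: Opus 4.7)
The plan is to prove (i) by a direct computation in polar coordinates using the Fourier expansion of the Poisson kernel, and then derive (ii) as a purely combinatorial consequence of (i). The hypothesis $f \in \mathcal O(\overline{\mathbb D}, \mathcal E)$ is what makes all termwise manipulations rigorous with minimal effort.

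For (i), I would write $f(z) = \sum_{j\geq 0} \hat f(j) z^j$, which converges uniformly on $\overline{\mathbb D}$ together with its termwise derivative $f^{(n)}(z) = \sum_{k\geq n} \frac{k!}{(k-n)!}\hat f(k) z^{k-n}$. Using the expansion
$$\langle P_{\!\mu}(re^{i\theta}) x, y\rangle \;=\; \sum_{j\in\mathbb Z} r^{|j|} e^{ij\theta}\, \langle \hat\mu(j) x, y\rangle,$$
one substitutes all three series into the definition of $D_{\mu,n}(f)$ and interchanges sum and integral (justified by absolute/uniform convergence on $\overline{\mathbb D}$). The $\theta$-integral annihilates every term except $j = l-k$, reducing the computation to a single radial integral $\int_0^1 r^{2(\max\{k,l\}-n)+1}(1-r^2)^{n-1}\,dr$. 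By the substitution $u = r^2$ this is a Beta integral equal to $\frac{(\max\{k,l\}-n)!\,(n-1)!}{2\,(\max\{k,l\})!}$. Combining it with the prefactor $\frac{1}{n!(n-1)!}$ and with the derivative constants $\frac{k!}{(k-n)!}\cdot\frac{l!}{(l-n)!}$ yields the coefficient $\binom{k\wedge l}{n}$, after the factorials at the larger index cancel against the Beta factor. Absolute convergence of the resulting double series is inherited from the uniform convergence of the power series on $\overline{\mathbb D}$ and the uniform boundedness of $P_{\!\mu}$ on compact subsets of $\mathbb D$.

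For (ii), I would apply the formula from (i) to both $zf$ and $f$ in order $n+1$. Since $\widehat{zf}(j) = \hat f(j-1)$ for $j\geq 1$ and vanishes at $j=0$, reindexing gives
$$D_{\mu,n+1}(zf) = \sum_{k,l\geq n} \binom{(k\wedge l)+1}{n+1}\, \langle \hat\mu(l-k)\hat f(k), \hat f(l)\rangle.$$
Subtracting $D_{\mu,n+1}(f) = \sum_{k,l\geq n+1} \binom{k\wedge l}{n+1}\, \langle \hat\mu(l-k)\hat f(k), \hat f(l)\rangle$, Pascal's rule $\binom{m+1}{n+1} - \binom{m}{n+1} = \binom{m}{n}$ handles the bulk where $k,l\geq n+1$, while the boundary terms with $k\wedge l=n$ contribute coefficient $\binom{n+1}{n+1} = 1 = \binom{n}{n}$ and hence also match. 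The remainder is exactly the formula for $D_{\mu,n}(f)$ given by (i).

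The only delicate step is the factorial bookkeeping in (i): the Beta integral naturally produces the larger index $\max\{k,l\}$ while the final answer must involve $k\wedge l = \min\{k,l\}$. The cleanest route is to observe the identity $\frac{k!}{(k-n)!}\cdot\frac{l!}{(l-n)!}\cdot\frac{(\max\{k,l\}-n)!}{(\max\{k,l\})!} = \frac{(k\wedge l)!}{(k\wedge l-n)!}$, obtained by cancelling the larger index against the Beta factor; dividing by $n!$ produces $\binom{k\wedge l}{n}$. Everything else is bookkeeping.
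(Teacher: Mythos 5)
Your computation is correct and follows essentially the same route as the paper, which simply defers to Rydhe's scalar-valued Lemma 3.2 and Proposition 3.4 and notes that the vector-valued generalization is routine: expand $f^{(n)}$ and the Poisson kernel in Fourier series, integrate out the angle, and evaluate the resulting Beta integral. I checked the factorial bookkeeping: with $M=\max\{k,l\}$ the radial integral contributes $\tfrac{(M-n)!\,(n-1)!}{M!}$ (against the normalized area measure), and cancelling the larger index indeed leaves $\binom{k\wedge l}{n}$; the absolute convergence follows from the geometric decay of $\|\hat f(k)\|$ (since $f$ is holomorphic past $\overline{\mathbb D}$) together with the uniform bound $\|\hat\mu(j)\|\leqslant\|\mu(\mathbb T)\|$. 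The derivation of (ii) from (i) via the reindexing $\widehat{zf}(j)=\hat f(j-1)$ and Pascal's rule, including the boundary terms $k\wedge l=n$, is exactly what the paper has in mind.

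The one point you have not covered is the case $n=0$, which the statement includes since $\mathbb Z_+$ denotes the non-negative integers here. For $n=0$ the quantity $D_{\mu,0}(f)$ is not an area integral but the boundary limit $\lim_{R\to1}\int_{\mathbb T}\langle P_{\!\mu}(R\zeta)f(R\zeta),f(R\zeta)\rangle\,d\sigma(\zeta)$, and your polar-coordinate/Beta-integral computation does not apply (the weight $(1-|z|^2)^{n-1}$ and the prefactor $\tfrac{1}{n!(n-1)!}$ are meaningless for $n=0$). You need the separate, easier computation $\int_{\mathbb T}\langle P_{\!\mu}(R\zeta)f(R\zeta),f(R\zeta)\rangle\,d\sigma(\zeta)=\sum_{k,l\geqslant0}R^{\,|l-k|+k+l}\langle\hat\mu(l-k)\hat f(k),\hat f(l)\rangle$, followed by $R\to1$. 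This matters because your proof of (ii) for $n=0$, namely $D_{\mu,1}(zf)-D_{\mu,1}(f)=D_{\mu,0}(f)$, invokes the $n=0$ instance of (i), and that identity is used later in the paper (e.g.\ in the proof of Lemma 3.1). With that one-line supplement the argument is complete.
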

In Proposition \ref{R diff vetor valued}, it is noted that the following refined version of the identity in Lemma \ref{Rydhe diff vector valued}(ii) holds for functions in $\mathcal O(\mathbb D,\mathcal E):$ 
\begin{align*}
D(\mu, n+1, R, zf) - R^2D(\mu, n+1, R, f)=R^2 D(\mu, n, R, f)
\end{align*}
where for $0<R<1$ and for any $f\in\mathcal O(\mathbb D,\mathcal E),$ the refined weighted Dirichlet integral of order $n$ is defined by
\begin{align*}
D(\mu, n, R, f):= 
\begin{cases}
\frac{1}{n!(n-1)!}\displaystyle\int_{R\mathbb D}\big\langle P_{\!\mu} (z)f^{(n)}(z), f^{(n)}(z)\big\rangle  (R^2-|z|^2)^{n-1}  ~dA(z),\quad & n\geqslant 1,\\[10pt]
\displaystyle\int_{\mathbb T}\big\langle  P_{\!\mu} (R\zeta)f(R\zeta), f(R\zeta)\big\rangle  d\sigma(\zeta), & n=0.
\end{cases}
\end{align*}
Before we come to the refined version of the concerned identity, in the following proposition, we show that the refined weighted Dirichlet integral $D(\mu, n, R, f)$ is equal to the weighted Dirichlet integral of order $n$ of the function $f_{\!_R},$ the $R$-dilation of $f,$ given by $f_{\!_R}(z)=f(Rz),\,z\in\mathbb D,$ with respect to an appropriate measure.
\begin{prop}\label{MCT of integral}
Let $\mu\in \mathcal M_+(\mathbb T,\mathcal B(\mathcal E))$  and $0 < R < 1.$  Consider the $\mathcal B(\mathcal E)$-valued semi-spectral measure $\lambda _{R}$ on $\mathbb T$ defined by $d\lambda_{R}(\zeta)= P_{\!\mu}(R\zeta)d\sigma(\zeta).$ Then for every $f\in \mathcal O(\mathbb D,\mathcal E)$ and $n\in\mathbb Z_+,$ it follows that 
\begin{align*}
D(\mu, n, R, f) = D_{\lambda _{R},n}\,(f_{\!_R}).
\end{align*}
Furthermore, for every $f\in \mathcal O(\mathbb D,\mathcal E)$ and $n\in\mathbb N,$ it follows that  
$D(\mu, n, R, f)$ increases to $D_{\mu,n}(f)$ as $R$ increases to $1.$
\end{prop}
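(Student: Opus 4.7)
The plan is to address the two assertions separately. The dilation identity $D(\mu,n,R,f) = D_{\lambda_R,n}(f_{\!_R})$ will follow from a change of variables $z = Rw$ together with a Poisson-reproduction identity for the operator-valued Poisson integral; the monotone convergence to $D_{\mu,n}(f)$ as $R \uparrow 1$ will reduce to the monotone convergence theorem once one notes that the integrand is nonnegative and pointwise nondecreasing in $R$.

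For the first identity I would first assume $n \geqslant 1$ and substitute $z = Rw$ in the integral defining $D(\mu,n,R,f)$. This gives $dA(z) = R^2\, dA(w)$, $(R^2 - |z|^2)^{n-1} = R^{2(n-1)}(1-|w|^2)^{n-1}$, and by the chain rule $(f_{\!_R})^{(n)}(w) = R^n f^{(n)}(Rw)$. Collecting the powers of $R$ reduces the assertion to the operator identity
\begin{equation*}
P_{\!\lambda_R}(w) = P_{\!\mu}(Rw), \quad w \in \mathbb D.
\end{equation*}
To verify this, fix $x, y \in \mathcal E$ and let $\phi(z) := \langle P_{\!\mu}(z)x, y\rangle$; this function is harmonic on $\mathbb D$, so $w \mapsto \phi(Rw)$ is harmonic on $\mathbb D$ and continuous on $\overline{\mathbb D}$ (since $R < 1$). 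The Poisson representation formula then yields
\begin{equation*}
\phi(Rw) = \int_{\mathbb T} P(w,\zeta)\, \phi(R\zeta)\, d\sigma(\zeta) = \langle P_{\!\lambda_R}(w)x, y\rangle,
\end{equation*}
where the second equality uses the definition of $\lambda_R$. Since $x,y \in \mathcal E$ are arbitrary, the operator identity follows. The case $n = 0$ is handled by a dominated-convergence argument as $r \uparrow 1$ in the defining limit of $D_{\lambda_R,0}(f_{\!_R})$, using that $\zeta \mapsto \langle P_{\!\mu}(Rr\zeta)f(Rr\zeta), f(Rr\zeta)\rangle$ converges uniformly on $\mathbb T$.

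For the monotone convergence statement with $n \geqslant 1$, I would rewrite $D(\mu,n,R,f)$ as an integral over $\mathbb D$ by inserting the indicator $\mathbf{1}_{R\mathbb D}(z)$. Positivity of $P_{\!\mu}(z)$ makes the integrand nonnegative, and since $n-1 \geqslant 0$, both $\mathbf{1}_{R\mathbb D}(z)$ and $(R^2 - |z|^2)^{n-1}$ (restricted to $R\mathbb D$) are pointwise nondecreasing in $R$, tending as $R \uparrow 1$ to $1$ and $(1-|z|^2)^{n-1}$ respectively. The monotone convergence theorem then yields the conclusion. The only step requiring genuine care is the Poisson-reproduction identity above; everything else is routine bookkeeping.
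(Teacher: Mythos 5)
Your proposal is correct and follows essentially the same route as the paper: both reduce the first identity to the key relation $P_{\!\lambda_R}(w)=P_{\!\mu}(Rw)$ (you via the bilinear forms $\langle\,\cdot\,x,y\rangle$ and Poisson reproduction of a harmonic function continuous on $\overline{\mathbb D}$, the paper via the diagonal forms $\langle\,\cdot\,x,x\rangle$), then perform the same change of variables $z=Rw$ for $n\geqslant 1$, handle $n=0$ by dominated convergence, and obtain the last assertion from the monotone convergence theorem applied to the nonnegative, nondecreasing integrands $\chi_{R\mathbb D}(z)\,(R^2-|z|^2)^{n-1}\langle P_{\!\mu}(z)f^{(n)}(z),f^{(n)}(z)\rangle$. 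No gaps.
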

\begin{proof}
For any $x\in\mathcal E$, note that $({\lambda_R})_{x,x}$ is a finite positive Borel measure on $\mathbb T$ and is given by 
$d({\lambda_R})_{x,x}(\zeta)= \langle P_{\!\mu}(R\zeta)x, x \rangle d\sigma(\zeta).$ Since the function $z\mapsto \langle P_{\!\mu}(Rz)x,x\rangle$ is  a harmonic function on a neighborhood of the closed unit disc $\overline{\mathbb D}$, it follows that 
\begin{equation*}
P_{\!{(\lambda_R)}_{x,x}}(z)= \langle P_{\!\mu}(Rz)x,x\rangle,\quad z\in\mathbb D.
\end{equation*}
 Also, since $P_{\!{(\lambda_R)}_{x,x}}(z)=\langle P_{\!\lambda_R}(z)x,x\rangle$  for every $z\in\mathbb D$ and $x\in\mathcal E$, it follows from the above equality that 
\begin{eqnarray}\label{eqnprop3.3 1}
P_{\!\lambda_R}(z)=P_{\!\mu}(Rz),\quad z\in\mathbb D.
\end{eqnarray}
Note that $f_{\!_R}^{(n)}(w)= R^{n} f^{(n)}(Rw)$ for every $w\in\mathbb D$ and for all $n\in\mathbb Z_+.$  For $n\geqslant 1,$ by a change of variables together with \eqref{eqnprop3.3 1}, we get that
\begin{eqnarray*}
D_{\lambda_R, n}\,(f_{\!_R})
&=& \int_{\mathbb D} \big\langle P_{\!\lambda_R}(w) f_{\!_R}^{(n)}(w), f_{\!_R}^{(n)}(w)\big\rangle (1-|w|^2)^{n-1}dA(w)\\
&=& \int_{R \mathbb D} \big\langle P_{\!\mu}(z)f^{(n)}(z), f^{(n)}(z) \big\rangle (R^2-|z|^2)^{n-1}dA(z)\\
&=& D(\mu, n, R, f).
\end{eqnarray*} In case of $n=0,$ first note that $f_{\!_R}\in\mathcal O(\overline{\mathbb D},\mathcal E)$  and the map $w\mapsto P_{\!\mu}(Rw)$ is continuous on the closed unit disc $\overline{\mathbb D}.$ Thus it follows from \eqref{eqnprop3.3 1} that
 \begin{align*}
\big\langle P_{\!\lambda_R}(r\zeta)f_{\!_R}(r\zeta),f_{\!_R}(r\zeta)\big\rangle \to \big\langle P_{\!\mu}(R\zeta)f(R\zeta),f(R\zeta)\big\rangle\quad \quad \text{as}\quad r\to1.
\end{align*}
Hence an application of dominated convergence theorem will give us 
\begin{eqnarray*}
D_{\lambda_R, 0}(f_{\!_R})
=\displaystyle \int_{\mathbb T} \big\langle P_{\!\mu}(R\zeta)f(R\zeta), f(R\zeta) \big\rangle \,d\sigma(\zeta)= D(\mu, 0, R, f).
\end{eqnarray*} 
This completes the proof of the first part of the proposition. For the last part, let $f\in \mathcal O(\mathbb D,\mathcal E)$ and $n\in\mathbb N.$ Note that for each $z\in\mathbb D,$ the function $\chi_{_{R\mathbb D}}(z)\big\langle P_{\!\mu} (z)f^{(n)}(z), f^{(n)}(z)\big\rangle  (R^2-|z|^2)^{n-1}$ increases to $\big\langle P_{\!\mu} (z)f^{(n)}(z), f^{(n)}(z)\big\rangle  (1^2-|z|^2)^{n-1}$ as $R$ increases to $1,$ where $\chi_A(z)$ denotes the characteristic function supported on $A.$ Hence by an application of monotone convergence theorem desired result follows.
\end{proof}

The following proposition establishes a difference identity which is a refinement of that in Lemma \ref{Rydhe diff vector valued}(ii) for $\mathcal E$-valued holomorphic functions on the open unit disc $\mathbb D.$ 
\begin{prop}\label{R diff vetor valued}
Let $\mu\in \mathcal M_+(\mathbb T,\mathcal B(\mathcal E)),$ $n\in\mathbb Z_+$  and  $0<R<1.$  Then for any $f\in \mathcal O(\mathbb D,\mathcal E),$ 
\begin{align*}
D(\mu, n+1, R, zf) - R^2D(\mu, n+1, R, f)=R^2 D(\mu, n, R, f).
\end{align*}
\end{prop}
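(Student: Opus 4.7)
The plan is to reduce the asserted identity to Lemma \ref{Rydhe diff vector valued}(ii), which is already available for functions in $\mathcal O(\overline{\mathbb D},\mathcal E)$, by exploiting the dilation trick packaged in the preceding Proposition \ref{MCT of integral}. The point is that although the given $f$ is only holomorphic on $\mathbb D$, its $R$-dilation $f_{\!_R}$ lies in $\mathcal O(\overline{\mathbb D},\mathcal E)$, and Proposition \ref{MCT of integral} converts each refined integral $D(\mu,n,R,\cdot)$ into an ordinary weighted Dirichlet integral $D_{\lambda_R,n}(\cdot)$ evaluated on the corresponding dilate, with respect to the auxiliary semi-spectral measure $\lambda_R$.

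The key algebraic observation is how the $R$-dilation interacts with multiplication by $z$: from
\[
(zf)_{\!_R}(w) \;=\; (Rw)\,f(Rw) \;=\; R\,w\,f_{\!_R}(w),
\]
one gets the scaling identity $(zf)_{\!_R} = R\cdot(z f_{\!_R})$ in $\mathcal O(\overline{\mathbb D},\mathcal E)$. Since $D_{\lambda_R,n+1}$ is a quadratic form in its argument, this yields
\[
D_{\lambda_R,n+1}\bigl((zf)_{\!_R}\bigr) \;=\; R^{2}\,D_{\lambda_R,n+1}\bigl(z f_{\!_R}\bigr).
\]
Combining this with Proposition \ref{MCT of integral} rewrites the three terms of the claim as
\begin{align*}
D(\mu,n+1,R,zf) &= R^{2}\,D_{\lambda_R,n+1}(z f_{\!_R}),\\
D(\mu,n+1,R,f) &= D_{\lambda_R,n+1}(f_{\!_R}),\\
D(\mu,n,R,f) &= D_{\lambda_R,n}(f_{\!_R}).
\end{align*}

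With this reformulation, the plan is to apply Lemma \ref{Rydhe diff vector valued}(ii) to the function $f_{\!_R}\in\mathcal O(\overline{\mathbb D},\mathcal E)$ with the semi-spectral measure $\lambda_R$, obtaining
\[
D_{\lambda_R,n+1}(z f_{\!_R}) - D_{\lambda_R,n+1}(f_{\!_R}) \;=\; D_{\lambda_R,n}(f_{\!_R}),
\]
and then multiply through by $R^{2}$ to recover precisely the asserted identity. I do not anticipate a substantive obstacle: once Proposition \ref{MCT of integral} has been invoked, the result is essentially automatic from Lemma \ref{Rydhe diff vector valued}(ii) combined with the quadratic homogeneity of the Dirichlet integrals. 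The only care needed is in bookkeeping the $R^{2}$ factor produced by $(zf)_{\!_R}=R(z f_{\!_R})$ and in confirming the edge case $n=0$, which is fine because $D_{\lambda_R,0}(f_{\!_R})$ is a genuine, convergent integral for $f_{\!_R}\in\mathcal O(\overline{\mathbb D},\mathcal E)$ as shown in the proof of the preceding proposition.
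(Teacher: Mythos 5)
Your proposal is correct and follows essentially the same route as the paper: both introduce the auxiliary measure $\lambda_R$, use Proposition \ref{MCT of integral} to rewrite each refined integral as a $D_{\lambda_R,\cdot}$ integral of a dilate, establish $D(\mu,n+1,R,zf)=R^2D_{\lambda_R,n+1}(zf_{\!_R})$ (the paper via a direct change of variables, you via $(zf)_{\!_R}=R\,(zf_{\!_R})$ and quadratic homogeneity, which amounts to the same thing), and then apply Lemma \ref{Rydhe diff vector valued}(ii) to $f_{\!_R}\in\mathcal O(\overline{\mathbb D},\mathcal E)$. No gaps.
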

\begin{proof}
 
Consider the $\mathcal B(\mathcal E)$-valued semi-spectral measure $\lambda _{R}$ on the unit circle $\mathbb T$ defined by $d\lambda_{R}(\zeta)= P_{\!\mu}(R\zeta)d\sigma(\zeta).$ 
By Proposition \ref{MCT of integral}, we have $D(\mu, n, R, f)= D_{\lambda_R, n}(f_{\!_R}).$ 
Note that $R^{n} (zf)^{(n+1)}(Rw)= (zf_{\!_R})^{(n+1)}(w)$ for every $w\in\mathbb D$.  
Again, by a change of variable, it is easy to verify that $D(\mu, n+1, R, zf) = R^2 D_{\lambda_R, n+1}\,(zf_{\!_R}).$
Hence, it follows that for every $f\in\mathcal O(\mathbb D,\mathcal E),$
\begin{align*}
D(\mu, n+1, R, zf) - R^2D(\mu, n+1, R, f)
=R^2 \big(D_{\lambda_R, n+1}(zf_{\!_R}) - D_{\lambda_R, n+1}(f_{\!_R})\big).
\end{align*}
Since $f_{\!_R}\in\mathcal O(\overline{\mathbb D},\mathcal E),$ an application of Lemma \ref{Rydhe diff vector valued}(ii) yields that
$$D(\mu, n+1, R, zf) - R^2D(\mu, n+1, R, f) = R^2 D_{\lambda_R, n}(f_{\!_R}).$$ 
This completes the proof.
\end{proof}

Let $\mu$ be a $\mathcal B(\mathcal E)$-valued semi-spectral measures on $\mathbb T$ and $n$ be an arbitrary but fixed positive integer. For the weighted Dirichlet integral $D_{\mu,n}(\cdot),$  we consider the associated weighted Dirichlet-type space $\mathcal H_{\mu,n}(\mathcal E)$  defined by
\begin{align*}
\mathcal H_{\mu,n}(\mathcal E):= \big\{f\in \mathcal O(\mathbb D,\mathcal E): D_{\mu,n}(f)<\infty\big\}.
\end{align*} 
It is straightforward to see that $\mathcal H_{\mu,n}(\mathcal E)$ is a linear subspace  of $\mathcal O(\mathbb D,\mathcal E),$ containing the set of all $\mathcal E$-valued polynomials.
Further, $\mathcal H_{\mu,n}(\mathcal E)$   is a semi-inner product space induced by the semi-norm $\sqrt{D_{\mu,n}(\cdot)}.$
If $\mathcal E= \mathbb C$  and $\mu$ is the Lebesgue measure $\sigma$ on $\mathbb T$ then the integral $D_{\sigma,1}(\cdot)$ becomes the usual Dirichlet integral and the space $\mathcal H_{\sigma,1}(\mathbb C)$ coincides with the classical Dirichlet space on $\mathbb D.$ 
More generally, the set of all monomials $\{z^j: j\geqslant 0\}$ forms an orthogonal set in $\mathcal H_{\sigma,n}(\mathbb C)$ and 
\begin{align*}
D_{\sigma,n}(z^j)=\begin{cases}
0 ,\quad 0\leqslant j\leqslant n-1,\\
\binom{j}{n}, \quad j\geqslant n.
\end{cases}
\end{align*}
The space $\mathcal H_{\mu,n}(\mathcal E)$ is a central topic of study in this section. In Proposition \ref{Bounded-H-mu-n vector valued}, we will show that the coordinate function $z$ is a multiplier of $\mathcal H_{\mu,n}(\mathcal E),$ that is, $f \in \mathcal H_{\mu, n}(\mathcal E)$ implies $zf\in \mathcal H_{\mu, n}(\mathcal E).$ 
Converse is also proved in Lemma \ref{Contractivity of L vector valued}(ii). To achieve this goal, we introduce another family of semi-norms which is a generalization of the class of norms introduced in \cite{TaylorDA} to the case of vector valued functions.

Suppose $Q$ is a positive operator in $\mathcal B(\mathcal E)$ and $\alpha\in \mathbb R$. Let $\mathcal D_{\alpha,Q}(\mathcal E):=\big\{f\in \mathcal O(\mathbb D, \mathcal E): \|f\|_{\mathcal D_{\alpha, Q}(\mathcal E)} < \infty\big\},$ where for any $f(z) =\sum_{k=0}^\infty a_kz^k$ in $\mathcal O(\mathbb D,\mathcal E)$, the semi-norm $\|f\|_{\mathcal D_{\alpha, Q}(\mathcal E)}$ is defined by
\begin{align}\label{Dalpha norm vector valued}
\|f\|^2_{\mathcal D_{\alpha, Q}(\mathcal E)}&= \sum\limits_{k=0}^{\infty}(k+1)^{\alpha}\langle Qa_k,a_k\rangle.
\end{align}  
It is easily verified that the space $\mathcal D_{\alpha, Q}(\mathcal E)$ equipped with the semi-norm $\|\cdot\|_{\mathcal D_{\alpha, Q}(\mathcal E)}$ is a semi-inner product space. Furthermore, if $Q$ is invertible then $\|\cdot\|_{\mathcal D_{\alpha, Q}(\mathcal E)}$ turns out to be a norm and the space $\mathcal D_{\alpha, Q}(\mathcal E)$ becomes a Hilbert space. 
Note that, if $Q$ is invertible then the two norms $\|\cdot\|_{\mathcal D_{\alpha, Q}(\mathcal E)}$ and $\|\cdot\|_{\mathcal D_{\alpha, I}(\mathcal E)}$ are equivalent and the associated spaces $\mathcal D_{\alpha, Q}(\mathcal E)$ and $\mathcal D_{\alpha, I}(\mathcal E)$ coincide. For any real number $\beta$ with $\alpha \leqslant \beta,$ it is straightforward to verify that $\|f\|_{\mathcal D_{\alpha, Q}(\mathcal E)}\leqslant \|f\|_{\mathcal D_{\beta, Q}(\mathcal E)}.$
Consequently, in this case, we have $\mathcal D_{\beta, Q}(\mathcal E) \subseteq \mathcal D_{\alpha, Q}(\mathcal E).$ 
When $\mathcal E=\mathbb C$ and $Q =1,$ the Hilbert space  $\mathcal D_{\alpha, Q}(\mathcal E)$ is simply denoted by $\mathcal D_{\alpha}.$ Many classical functional Hilbert spaces are given by $\mathcal D_{\alpha},$ for example, $\mathcal D_{-1},$ $\mathcal D_0$ and $\mathcal D_1$ are the Bergman space, the Hardy space and the Dirichlet space on the open unit disc $\mathbb D$ respectively.
From \eqref{Dalpha norm vector valued}, it can be easily seen that 
\begin{align}\label{derivative continuity vector-valued}
f\in \mathcal D_{\alpha, Q}(\mathcal E) \mbox{ if and only if } f' \in \mathcal D_{{\alpha-2}, Q}(\mathcal E).
\end{align}
For $\alpha <0$, the semi-norms $\|\cdot\|_{\mathcal D_{\alpha, Q}(\mathcal E)}$ and $\|\cdot\|_{\alpha,Q}$ on $\mathcal D_{\alpha, Q}(\mathcal E)$ are equivalent (a straightforward generalization of \cite[Lemma 2]{TaylorDA}), where $\|f\|_{\alpha, Q}$ is given by
\begin{align}\label{Integral form of Dalpha norm vector-valued}
\|f\|^2_{\alpha, Q}:= \int_{\mathbb D}\langle Qf(z), f(z)\rangle (1-|z|^2)^{-\alpha-1}dA(z).
\end{align}

For any $\mu \in \mathcal M_+(\mathbb T,\mathcal B(\mathcal E)),$ the following lemma establishes a relationship between the spaces $\mathcal H_{\mu,n}(\mathcal E)$ and $\mathcal D_{n,\mu(\mathbb T)}(\mathcal E).$ 
\begin{lemma} \label{embedding of spaces vector valued}
Let $\mu$ be a $\mathcal B(\mathcal E)$-valued semi-spectral measure on the unit circle $\mathbb T.$ Then, 
\begin{align*}
&\mathcal H_{\mu,n}(\mathcal E)\subseteq \mathcal D_{{n-1}, \mu(\mathbb T)}(\mathcal E), \quad n\geqslant 1,\\
&\mathcal D_{{n+1}, \mu(\mathbb T)}(\mathcal E) \subseteq \mathcal H_{\mu,n}(\mathcal E), \quad n\geqslant 2.
\end{align*}
\end{lemma}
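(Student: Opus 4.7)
The plan is to reduce each of the two claimed inclusions to a statement about the derivative $f^{(n)}$ lying in an appropriate $\mathcal D_{\alpha,\mu(\mathbb T)}(\mathcal E)$-space, using two ingredients: a pointwise two-sided bound comparing the operator $P_{\!\mu}(z)$ with $\mu(\mathbb T)$, and the integral form \eqref{Integral form of Dalpha norm vector-valued} of the seminorm $\|\cdot\|_{\alpha,Q}$ for negative $\alpha$.

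First, I would record the pointwise operator inequality
\begin{align*}
\tfrac{1-|z|}{1+|z|}\,\mu(\mathbb T)\;\leqslant\;P_{\!\mu}(z)\;\leqslant\;\tfrac{1+|z|}{1-|z|}\,\mu(\mathbb T),\qquad z\in\mathbb D,
\end{align*}
which is just the bound $(1-|z|)^2\leqslant|z-\zeta|^2\leqslant(1+|z|)^2$ integrated against $\mu$. Rewriting the weights, this yields
\begin{align*}
\tfrac{1}{4}(1-|z|^2)^n\,\mu(\mathbb T)\;\leqslant\;P_{\!\mu}(z)(1-|z|^2)^{n-1}\;\leqslant\;4(1-|z|^2)^{n-2}\mu(\mathbb T),
\end{align*}
where the right-hand inequality uses $n\geqslant 2$.

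For the first inclusion ($n\geqslant 1$), I would apply the lower bound inside $D_{\mu,n}(f)$ to get
\begin{align*}
D_{\mu,n}(f)\;\geqslant\;\tfrac{1}{4\,n!(n-1)!}\int_{\mathbb D}\bigl\langle \mu(\mathbb T)f^{(n)}(z),f^{(n)}(z)\bigr\rangle(1-|z|^2)^n\,dA(z)\;=\;\tfrac{1}{4\,n!(n-1)!}\,\|f^{(n)}\|^2_{-n-1,\mu(\mathbb T)},
\end{align*}
using \eqref{Integral form of Dalpha norm vector-valued} with $\alpha=-n-1$. Since $-n-1<0$, the seminorms $\|\cdot\|_{-n-1,\mu(\mathbb T)}$ and $\|\cdot\|_{\mathcal D_{-n-1,\mu(\mathbb T)}(\mathcal E)}$ are equivalent, so $f^{(n)}\in\mathcal D_{-n-1,\mu(\mathbb T)}(\mathcal E)$. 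Now $n$ applications of \eqref{derivative continuity vector-valued} show this is equivalent to $f\in\mathcal D_{-n-1+2n,\mu(\mathbb T)}(\mathcal E)=\mathcal D_{n-1,\mu(\mathbb T)}(\mathcal E)$; the $n$ lowest Taylor coefficients $a_0,\dots,a_{n-1}$ contribute only finitely to $\|\cdot\|_{\mathcal D_{n-1,\mu(\mathbb T)}(\mathcal E)}$ via the trivial estimate $\langle\mu(\mathbb T)a_k,a_k\rangle\leqslant\|\mu(\mathbb T)\|\,\|a_k\|^2$, so no issue arises from the gap between the power-series indices.

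For the second inclusion ($n\geqslant 2$), I would run the same argument in reverse using the upper bound to obtain
\begin{align*}
D_{\mu,n}(f)\;\leqslant\;\tfrac{4}{n!(n-1)!}\int_{\mathbb D}\bigl\langle \mu(\mathbb T)f^{(n)}(z),f^{(n)}(z)\bigr\rangle(1-|z|^2)^{n-2}\,dA(z)\;=\;\tfrac{4}{n!(n-1)!}\,\|f^{(n)}\|^2_{1-n,\mu(\mathbb T)},
\end{align*}
this time with $\alpha=1-n$. Since $n\geqslant 2$ gives $1-n\leqslant -1<0$, \eqref{Integral form of Dalpha norm vector-valued} together with the hypothesis $f\in\mathcal D_{n+1,\mu(\mathbb T)}(\mathcal E)$ and \eqref{derivative continuity vector-valued} (applied $n$ times, so that the parameter drops from $n+1$ to $n+1-2n=1-n$) yields finiteness of the right side, whence $f\in\mathcal H_{\mu,n}(\mathcal E)$. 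The only subtle point in the whole proof is the bookkeeping at the boundary case $n=2$, where $(1-|z|^2)^{n-2}=1$, and the strict need for $n\geqslant 2$ in the second inclusion traces to precisely the fact that $(1-|z|)^{n-2}$ ceases to be bounded when $n=1$; this is the step that I would be most careful about when writing out the argument.
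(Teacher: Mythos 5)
Your proposal is correct and follows essentially the same route as the paper: the two-sided Poisson kernel bound (equivalent to the estimate $\frac{1-|z|^2}{4}\leqslant P(z,\zeta)\leqslant\frac{4}{1-|z|^2}$ used in the paper), the resulting inequalities $D_{\mu,n}(f)\geqslant\frac{1}{4\,n!(n-1)!}\|f^{(n)}\|^2_{-(n+1),\mu(\mathbb T)}$ and $D_{\mu,n}(f)\leqslant\frac{4}{n!(n-1)!}\|f^{(n)}\|^2_{-(n-1),\mu(\mathbb T)}$, and then the equivalence of the two seminorms for negative index together with repeated use of \eqref{derivative continuity vector-valued}. The extra remark about the low-order Taylor coefficients is a harmless (and correct) elaboration of what the paper absorbs into \eqref{derivative continuity vector-valued}.
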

\begin{proof}
From \cite[p. 236]{RudinRC}, it is easy to see that the Poisson kernel satisfies the following estimates:
\begin{align*}
\frac{1-|z|^2}{4}\leqslant  P(z,\zeta) \leqslant \frac{4}{(1-|z|^2)},\quad z\in\mathbb D,~ \zeta\in\mathbb T.
\end{align*}
This in turn implies that
\begin{align}\label{P-1-z vector-valued}
\frac{\mu(\mathbb T)}{4}(1-|z|^2)^n \leqslant P_{\!\mu}(z)(1-|z|^2)^{n-1}\leqslant 4\mu(\mathbb T)(1-|z|^2)^{n-2}, \quad n\in\mathbb N, \,z\in\mathbb D.
\end{align}
 Using \eqref{Integral form of Dalpha norm vector-valued} along with the above estimates in \eqref{P-1-z vector-valued}, it follows that for $f\in\mathcal O(\mathbb D, \mathcal E),$
\begin{align}
\label{embedding inequality vector-valued}
D_{\mu,n}(f) & \geqslant \frac{1}{4n! (n-1)!}\big\|f^{(n)}\big\|_{-(n+1),\mu(\mathbb T)}^2,\quad n\geqslant 1,\\
\label{embedding inequality1 vector-valued}
D_{\mu,n}(f) &\leqslant \frac{4}{n! (n-1)!} \big\|f^{(n)}\big\|_{-(n-1), \mu(\mathbb T)}^2,\quad n\geqslant 2.
\end{align}
Note that for $\alpha < 0,$ the semi-norm $\|f\|_{\alpha, \mu(\mathbb  T)}$ is equivalent to the semi-norm $\|f\|_{\mathcal D_{\alpha,  \mu(\mathbb  T)}(\mathcal E)}.$ Using this fact together with \eqref{derivative continuity vector-valued}, \eqref{embedding inequality vector-valued}, and \eqref{embedding inequality1 vector-valued}, we obtain the desired conclusion.
\end{proof}

\begin{cor}\label{containment in the Hardy space} 
Let  $n\in \mathbb N$ and $\mu\in \mathcal M_+(\mathbb T,\mathcal B(\mathcal E))$.  If $\mu(\mathbb T)$ is invertible then
$\mathcal H_{\mu,n}(\mathcal E)\subseteq H^2(\mathcal E).$ 
\end{cor}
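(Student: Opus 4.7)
The plan is to chain together the inclusion from the preceding lemma with the obvious monotonicity of the scales $\mathcal D_{\alpha,Q}(\mathcal E)$ in $\alpha$, and then identify $\mathcal D_{0,I}(\mathcal E)$ with $H^2(\mathcal E)$. Since the argument is short, I break it down as follows.

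First, I would recall that by the definition of the semi-norm in \eqref{Dalpha norm vector valued}, the space $\mathcal D_{0,I}(\mathcal E)$ is precisely $H^2(\mathcal E)$, because for $f(z)=\sum a_k z^k$ we have $\|f\|^2_{\mathcal D_{0,I}(\mathcal E)}=\sum_{k\geqslant 0}\|a_k\|^2_{\mathcal E}$. Next, when $\mu(\mathbb T)$ is invertible there exist constants $c,C>0$ such that $cI\leqslant \mu(\mathbb T)\leqslant CI$, so the semi-norms $\|\cdot\|_{\mathcal D_{0,\mu(\mathbb T)}(\mathcal E)}$ and $\|\cdot\|_{\mathcal D_{0,I}(\mathcal E)}$ are equivalent and the two spaces coincide; in particular $\mathcal D_{0,\mu(\mathbb T)}(\mathcal E)=H^2(\mathcal E)$.

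Next, I would use the monotonicity remark noted in the text: for $\alpha\leqslant \beta$ one has $\|f\|_{\mathcal D_{\alpha,Q}(\mathcal E)}\leqslant \|f\|_{\mathcal D_{\beta,Q}(\mathcal E)}$, hence $\mathcal D_{\beta,Q}(\mathcal E)\subseteq \mathcal D_{\alpha,Q}(\mathcal E)$. Taking $Q=\mu(\mathbb T)$, $\alpha=0$ and $\beta=n-1\geqslant 0$ (which is valid since $n\geqslant 1$), this gives
\begin{equation*}
\mathcal D_{n-1,\mu(\mathbb T)}(\mathcal E)\subseteq \mathcal D_{0,\mu(\mathbb T)}(\mathcal E)=H^2(\mathcal E).
\end{equation*}

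Finally, I would invoke Lemma \ref{embedding of spaces vector valued}, which yields $\mathcal H_{\mu,n}(\mathcal E)\subseteq \mathcal D_{n-1,\mu(\mathbb T)}(\mathcal E)$ for every $n\geqslant 1$. Combining this with the previous inclusion gives $\mathcal H_{\mu,n}(\mathcal E)\subseteq H^2(\mathcal E)$, which is the claim. There is no real obstacle here; the only subtle point is making sure the monotonicity in $\alpha$ is applied with $\alpha=0$, since at $n=1$ the inclusion from the lemma already lands in $\mathcal D_{0,\mu(\mathbb T)}(\mathcal E)$ and the monotonicity step is vacuous, whereas for $n\geqslant 2$ the step is genuinely needed.
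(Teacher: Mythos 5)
Your argument is correct and is essentially identical to the paper's own proof: both chain the inclusion $\mathcal H_{\mu,n}(\mathcal E)\subseteq \mathcal D_{n-1,\mu(\mathbb T)}(\mathcal E)$ from Lemma \ref{embedding of spaces vector valued} with the monotonicity $\mathcal D_{n-1,\mu(\mathbb T)}(\mathcal E)\subseteq \mathcal D_{0,\mu(\mathbb T)}(\mathcal E)$ and the identification $\mathcal D_{0,\mu(\mathbb T)}(\mathcal E)=\mathcal D_{0,I}(\mathcal E)=H^2(\mathcal E)$ via invertibility of $\mu(\mathbb T)$. No gaps.
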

\begin{proof}
Note that $\mathcal D_{{n-1}, \mu(\mathbb T)}(\mathcal E)\subseteq \mathcal D_{0, \mu(\mathbb T)}(\mathcal E)$ for every $n\in\mathbb N.$ Since $\mu(\mathbb T)$ is invertible, it follows that the two norms $\|\cdot\|_{\mathcal D_{0, \mu(\mathbb T)}(\mathcal E)}$ and $\|\cdot\|_{\mathcal D_{0, I}(\mathcal E)}$ are equivalent and the associated spaces $\mathcal D_{0, \mu(\mathbb T)}(\mathcal E)$ and $\mathcal D_{0, I}(\mathcal E)$ are equal. 
By definition, the space $\mathcal D_{0, I}(\mathcal E)$ is equal to the Hardy space $H^2(\mathcal E)$ of $\mathcal E$-valued holomorphic functions on the open unit disc $\mathbb D.$ The corollary is now immediate in the view of Lemma \ref{embedding of spaces vector valued}.
\end{proof}

Now we are ready to show that the coordinate function $z$ is a multiplier for $\mathcal H_{\mu,n}(\mathcal E).$ The proof is divided into two different cases.
Although the proof for the case of $n=1$ follows from an argument in \cite[Theorem 3.1]{OLLOF2iso}, nevertheless we provide an alternative proof for the sake of completeness.
\begin{prop}\label{Bounded-H-mu-n vector valued}
Let $\mu\in \mathcal M_+(\mathbb T,\mathcal B(\mathcal E))$ and $n\in\mathbb N.$ The coordinate function $z$ is a multiplier for the semi-inner product space $\mathcal H_{\mu,n}(\mathcal E).$
\end{prop}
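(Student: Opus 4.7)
The proof naturally splits into two cases depending on the value of $n$.

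\textit{Case $n\geqslant 2$.} The plan is to exploit the product rule $(zf)^{(n)}(z)=zf^{(n)}(z)+nf^{(n-1)}(z)$ and the semi-norm triangle inequality induced by $P_{\!\mu}(z)$ to obtain the pointwise estimate
\begin{align*}
\big\langle P_{\!\mu}(z)(zf)^{(n)}(z),(zf)^{(n)}(z)\big\rangle \leqslant 2\big\langle P_{\!\mu}(z)f^{(n)}(z),f^{(n)}(z)\big\rangle+2n^2\big\langle P_{\!\mu}(z)f^{(n-1)}(z),f^{(n-1)}(z)\big\rangle.
\end{align*}
Multiplying by $(1-|z|^2)^{n-1}/(n!(n-1)!)$ and integrating over $\mathbb D$ bounds $D_{\mu,n}(zf)$ by $2D_{\mu,n}(f)$ plus a constant multiple of $\int_{\mathbb D}\langle P_{\!\mu}(z)f^{(n-1)}(z),f^{(n-1)}(z)\rangle(1-|z|^2)^{n-1}dA(z)$. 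Invoking the upper estimate $P_{\!\mu}(z)(1-|z|^2)^{n-1}\leqslant 4\mu(\mathbb T)(1-|z|^2)^{n-2}$ from \eqref{P-1-z vector-valued}—which produces an integrable weight precisely because $n\geqslant 2$—this integral is at most $4\|f^{(n-1)}\|^2_{1-n,\mu(\mathbb T)}$. Since $1-n<0$, the latter semi-norm is equivalent to $\|f^{(n-1)}\|^2_{\mathcal D_{1-n,\mu(\mathbb T)}(\mathcal E)}$; iterating the derivative correspondence \eqref{derivative continuity vector-valued} identifies this (up to constants and finitely many omitted terms) with $\|f\|^2_{\mathcal D_{n-1,\mu(\mathbb T)}(\mathcal E)}$, which is finite by Lemma \ref{embedding of spaces vector valued}. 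Therefore $D_{\mu,n}(zf)<\infty$.

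\textit{Case $n=1$.} The preceding recipe breaks down because $(1-|z|^2)^{n-1}=1$ no longer absorbs the boundary blow-up of $P_{\!\mu}(z)$, and the bound furnished by \eqref{P-1-z vector-valued} is no longer locally integrable. Following the idea of \cite[Theorem 3.1]{OLLOF2iso}, I would instead apply the refined difference identity of Proposition \ref{R diff vetor valued} at $n=0$,
\[
D(\mu,1,R,zf)=R^2 D(\mu,1,R,f)+R^2 D(\mu,0,R,f),\quad 0<R<1.
\]
The first summand is uniformly bounded by $D_{\mu,1}(f)<\infty$ via the monotonicity in Proposition \ref{MCT of integral}. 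The delicate step is a uniform bound on the order-zero integral $D(\mu,0,R,f)$ in $R\in(0,1)$. To obtain this, I would work with the $r$-dilations $f_{\!_r}\in\mathcal O(\overline{\mathbb D},\mathcal E)$, for which Lemma \ref{Rydhe diff vector valued}(ii) yields the clean identity $D_{\mu,1}(zf_{\!_r})=D_{\mu,1}(f_{\!_r})+D_{\mu,0}(f_{\!_r})$, combine this with Theorem \ref{Radial domination} (which gives $D_{\mu,1}(f_{\!_r})\leqslant D_{\mu,1}(f)$ and also $D_{\mu,1}((zf)_{\!_r})\leqslant D_{\mu,1}(zf)$), and use the scaling $zf_{\!_r}=(1/r)(zf)_{\!_r}$ to extract a uniform-in-$r$ control of $D_{\mu,0}(f_{\!_r})$. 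A passage to the limit $r\to 1^{-}$ then transfers this to uniform control of $D(\mu,0,R,f)$ in $R$, after which the monotone-convergence statement in Proposition \ref{MCT of integral} gives $D_{\mu,1}(zf)=\lim_{R\to 1^{-}}D(\mu,1,R,zf)<\infty$.

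The principal obstacle is the $n=1$ case: the pointwise Poisson-kernel estimate that drives the case $n\geqslant 2$ is too crude here, and uniform control of $D(\mu,0,R,f)$ must be pieced together indirectly via the refined difference identities of Section 2, Lemma \ref{Rydhe diff vector valued}(ii) applied on the closed-disc-analytic dilations $f_{\!_r}$, and the radial-domination Theorem \ref{Radial domination}. Avoiding circularity in this step—since $D_{\mu,1}(zf)$ is precisely what we are trying to bound—is the subtle point that necessitates working through approximations by $f_{\!_r}$ before passing to the limit.
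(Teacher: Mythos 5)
Your case $n\geqslant 2$ is essentially the paper's argument: the product rule $(zf)^{(n)}=zf^{(n)}+nf^{(n-1)}$ reduces everything to showing that $f^{(n-1)}$ lies in $L^2_a$ of the weight $(1-|z|^2)^{n-1}P_{\!\mu}(z)\,dA(z)$, and the estimate \eqref{P-1-z vector-valued} together with Lemma \ref{embedding of spaces vector valued} and \eqref{derivative continuity vector-valued} does exactly that. This part is correct.

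The case $n=1$ has a genuine gap. Your plan is to bound $D(\mu,1,R,zf)=R^2D(\mu,1,R,f)+R^2D(\mu,0,R,f)$ by producing a uniform-in-$R$ bound on the order-zero term, and the only mechanism you offer for that is the identity $D_{\mu,1}(zf_{\!_r})=D_{\mu,1}(f_{\!_r})+D_{\mu,0}(f_{\!_r})$ combined with $zf_{\!_r}=(1/r)(zf)_{\!_r}$ and radial domination applied to $zf$. But $D_{\mu,1}((zf)_{\!_r})\leqslant D_{\mu,1}(zf)$ is vacuous unless you already know $D_{\mu,1}(zf)<\infty$, which is precisely the statement being proved; you name this circularity but do not break it, and working through the dilations before passing to the limit does not break it either, since every bound you extract on $D_{\mu,0}(f_{\!_r})$ still routes through $D_{\mu,1}(zf)$. (There is also a structural problem: Theorem \ref{Radial domination} is proved downstream of this proposition, via Proposition \ref{general diff. formula vector valued} and Corollary \ref{Containment of Hmu vector valued}, so it cannot be invoked here without first re-deriving its $n=1$ case independently.) The paper breaks the circle by a different device: write $f=f(0)+zg$ with $g=Lf$, observe that differentiation kills the constant so that $D(\mu,1,R,zg)=D(\mu,1,R,f)$, and apply Proposition \ref{R diff vetor valued} to $g$ rather than to $f$, which gives $R^2D(\mu,0,R,g)\leqslant D(\mu,1,R,zg)=D(\mu,1,R,f)$, a bound by a quantity already known to be finite. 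The constant term is then absorbed using the triangle inequality and the mean value property $\int_{\mathbb T}\langle P_{\!\mu}(R\zeta)f(0),f(0)\rangle\,d\sigma(\zeta)=\langle\mu(\mathbb T)f(0),f(0)\rangle$, yielding $D(\mu,0,R,f)\leqslant 2\langle\mu(\mathbb T)f(0),f(0)\rangle+2D(\mu,1,R,f)$ and hence $D_{\mu,1}(zf)\leqslant 2\langle\mu(\mathbb T)f(0),f(0)\rangle+3D_{\mu,1}(f)$. Some device of this kind, controlling the order-zero integral of $Lf$ by the order-one integral of $f$ itself, is what your argument is missing.
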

\begin{proof} 
The proof is divided into two cases.

{\bf{Case $n=1$ :}} Let $f\in \mathcal H_{\mu, 1}(\mathcal E).$ Note that $f=f(0)+zg$ for some $g\in \mathcal O(\mathbb D, \mathcal E).$ As $P_{\!\mu}(z)$ is a positive operator, by an application of triangle inequality, it is straightforward to verify that
\begin{align*}
\big\langle P_{\!\mu}(z)f(z), f(z)\big\rangle \leqslant 2\big\langle P_{\!\mu}(z)f(0), f(0)\big\rangle +2 \big\langle P_{\!\mu}(z)zg(z), zg(z)\big\rangle,\,\,\,z\in\mathbb D.
\end{align*}
Let $R\in (0,1)$. Since the map $z\mapsto \langle P_{\!\mu}(Rz)f(0), f(0)\rangle$ is a positive harmonic function on a neighborhood of the closed unit disc $\overline{\mathbb D},$ by an application of the mean value property for harmonic functions, it follows that
\begin{align*}
\int_{\mathbb T}\big\langle P_{\!\mu}(R\zeta)f(0), f(0)\big\rangle\,d\sigma(\zeta) = \big\langle P_{\!\mu}(0)f(0), f(0)\big\rangle = \big\langle \mu(\mathbb T)f(0), f(0)\big\rangle.
\end{align*}
Thus we get that 
\begin{align*}
D(\mu,0,R,f)&= \displaystyle \int_{\mathbb T}\big\langle P_{\!\mu}(R\zeta)f(R\zeta), f(R\zeta)\big\rangle\, d\sigma(\zeta)\\
&\leqslant 2\big\langle \mu(\mathbb T)f(0), f(0)\big\rangle + 2 R^2 \displaystyle \int_{\mathbb T}\big\langle P_{\!\mu}(R\zeta)g(R\zeta), g(R\zeta)\big\rangle\, d\sigma(\zeta)\\
&= 2\big\langle \mu(\mathbb T)f(0), f(0)\big\rangle + 2 R^2D(\mu,0,R,g).
\end{align*}
Applying Proposition \ref{R diff vetor valued} to the function $g,$ we find that $R^2D(\mu,0,R,g) \leqslant D (\mu,1,R,zg).$ Thus we have 
\begin{align*}
D(\mu,0,R,f)\leqslant 2\big\langle \mu(\mathbb T)f(0), f(0)\big\rangle + 2 D(\mu,1,R,zg).
\end{align*} 
Note that $D(\mu,1,R,zg)= D(\mu,1,R,f).$ This together with Proposition \ref{R diff vetor valued} will give us
\begin{align}\label{Multiplier-case n=1}
D(\mu,1,R,zf)\leqslant 2\big\langle \mu(\mathbb T)f(0), f(0)\big\rangle + 3 D(\mu,1,R,f).
\end{align} 
Now taking limit $R\to1$ on both the sides of \eqref{Multiplier-case n=1}, we get $D_{\mu,1}(zf)\leqslant 2\big\langle \mu(\mathbb T)f(0), f(0)\big\rangle + 3 D_{\mu,1}(f).$ This shows that $zf\in \mathcal H_{\mu, 1}(\mathcal E)$ whenever $f\in \mathcal H_{\mu, 1}(\mathcal E).$

{\bf{Case $n\geqslant 2$ :}} 
Consider  $d\nu(z):=(1-|z|^2)^{n-1} P_{\!\mu}(z) dA(z),$ a $\mathcal B(\mathcal E)$-valued  weighted area measure on the unit disc $\mathbb D.$ 
Note that an $\mathcal E$-valued holomorphic function $f$ on the unit disc $\mathbb D$ is in $\mathcal H_{\mu, n}(\mathcal E)$ if and only if $f^{(n)}$ is in $L_a^2(\mathbb D, \mathcal E, d\nu),$ 
where 
\begin{equation*}
L_a^2(\mathbb D, \mathcal E, d\nu):=
\Big\{g\in \mathcal O(\mathbb D, \mathcal E): \int_{\mathbb D} \big\langle P_{\!\mu}(z)g(z), g(z)\big\rangle (1-|z|^2)^{n-1} dA(z)  < \infty\Big\}.
\end{equation*}
Let $f\in\mathcal H_{\mu, n}(\mathcal E).$ We easily see that $zf^{(n)}\in L_a^2(\mathbb D, \mathcal E, d\nu).$
In order to show that $zf\in \mathcal H_{\mu, n}(\mathcal E),$ using the relation $(zf)^{(n)}=zf^{(n)}+n f^{(n-1)},$ it suffices to prove that $f^{(n-1)}\in  L_a^2(\mathbb D, \mathcal E, d\nu).$
By Lemma \ref{embedding of spaces vector valued}, $\mathcal H_{\mu, n}(\mathcal E)\subseteq \mathcal D_{n-1, \mu(\mathbb T)}(\mathcal E)$ and hence $f\in \mathcal D_{n-1, \mu(\mathbb T)}(\mathcal E).$ By repeated use of \eqref{derivative continuity vector-valued}, one obtains $f^{(n-1)}\in \mathcal D_{-(n-1), \mu(\mathbb T)}(\mathcal E)$ and therefore $\|f^{(n-1)}\|_{-(n-1), \mu(\mathbb T)}< \infty.$
Using \eqref{Integral form of Dalpha norm vector-valued} and \eqref{P-1-z vector-valued}, we get
\begin{eqnarray*}
4\| f^{(n-1)}\|_{-(n-1), \mu(\mathbb T)}&= &4 \int_{\mathbb D}  \langle\mu(\mathbb T)f^{(n-1)}(z), f^{(n-1)}(z)\rangle (1-|z|^2)^{n-2}dA(z)\\
&\geqslant & \int_{\mathbb D} \langle P_{\!\mu}(z)
f^{(n-1)}(z), f^{(n-1)}(z)\rangle (1-|z|^2)^{n-1}dA(z).
\end{eqnarray*}
Hence  $f^{(n-1)}\in  L_a^2(\mathbb D, \mathcal E, d\nu),$ completing the proof of the proposition.
\end{proof}

As the coordinate function $z$ is a multiplier for $\mathcal H_{\mu,n}(\mathcal E),$ we obtain that the difference identity as described in Lemma \ref{Rydhe diff vector valued}(ii) remains valid for a larger class of functions, namely for functions in $\mathcal H_{\mu,n}(\mathcal E).$  
\begin{prop}\label{general diff. formula vector valued}
Let $\mu$ be a $\mathcal B(\mathcal E)$-valued semi-spectral measure on the unit circle $\mathbb T$ and $n$ be a positive integer. Then for every $f$ in $\mathcal H_{\mu,n}(\mathcal E),$ $D_{\mu,n}(zf)- D_{\mu,n}(f) = D_{\mu,n-1}(f).$ 
\end{prop}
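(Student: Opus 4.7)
The plan is to derive the identity by passing to the limit $R \uparrow 1$ in the refined version already established in Proposition \ref{R diff vetor valued}. The key point is that once we know $z$ is a multiplier of $\mathcal{H}_{\mu,n}(\mathcal{E})$ (Proposition \ref{Bounded-H-mu-n vector valued}) and that $D(\mu,n,R,\cdot) \uparrow D_{\mu,n}(\cdot)$ as $R\uparrow 1$ (Proposition \ref{MCT of integral}), both sides of the refined identity behave well in the limit.

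First I would apply Proposition \ref{R diff vetor valued} with its index $n$ replaced by $n-1$ (permissible since $n\geqslant 1$ means $n-1\in\mathbb{Z}_+$), which yields, for every $0<R<1$ and every $f\in\mathcal O(\mathbb D,\mathcal E)$,
\begin{align*}
D(\mu, n, R, zf) - R^{2} D(\mu, n, R, f) \;=\; R^{2}\, D(\mu, n-1, R, f).
\end{align*}
Now assume $f\in\mathcal H_{\mu,n}(\mathcal E)$. By Proposition \ref{Bounded-H-mu-n vector valued}, $zf\in\mathcal H_{\mu,n}(\mathcal E)$ as well, so both $D_{\mu,n}(f)$ and $D_{\mu,n}(zf)$ are finite.

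Next I would take the limit $R\to 1^-$ on each side. By the last statement of Proposition \ref{MCT of integral}, applied once to $f$ and once to $zf$, the left-hand side converges monotonically to $D_{\mu,n}(zf) - D_{\mu,n}(f)$, a finite quantity. For the right-hand side, when $n\geqslant 2$ the same monotone convergence statement (applied with the positive integer $n-1$) gives $R^{2}D(\mu,n-1,R,f)\to D_{\mu,n-1}(f)$, and we are done. The case $n=1$ is the only mildly delicate point, since $D_{\mu,0}$ is defined through a boundary limit rather than an area integral and a priori may not exist; however, because the left-hand side converges to a finite value, the right-hand side $R^{2}D(\mu,0,R,f)$ must also converge, and its limit is by definition exactly $D_{\mu,0}(f)$. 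This simultaneously justifies that $D_{\mu,0}(f)$ exists and identifies it with $D_{\mu,1}(zf)-D_{\mu,1}(f)$.

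There is essentially no serious obstacle: the content of the argument has been front-loaded into Propositions \ref{MCT of integral}, \ref{R diff vetor valued}, and \ref{Bounded-H-mu-n vector valued}. The only care needed is the bookkeeping of the index shift in Proposition \ref{R diff vetor valued} and a separate remark for $n=1$ so that the limit definition of $D_{\mu,0}$ is addressed rather than silently invoked.
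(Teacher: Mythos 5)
Your proposal is correct and follows essentially the same route as the paper: apply Proposition \ref{R diff vetor valued} (with the appropriate index shift), note that $zf\in\mathcal H_{\mu,n}(\mathcal E)$ by Proposition \ref{Bounded-H-mu-n vector valued}, and let $R\to 1$ using the monotone convergence from Proposition \ref{MCT of integral}. Your explicit treatment of the $n=1$ case, where the convergence of $R^2D(\mu,0,R,f)$ is deduced from the finiteness of the left-hand side and then identified with the limit defining $D_{\mu,0}(f)$, is a point the paper leaves implicit, but the argument is the same.
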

\begin{proof}
Let $0 < R< 1$ and  $f\in \mathcal H_{\mu,n}(\mathcal E)$. By Proposition \ref{R diff vetor valued}, we have that 
\begin{align}\label{auxilary}
D(\mu, n, R, zf) - R^2D(\mu, n, R, f)=R^2 D(\mu, n-1, R, f).
\end{align} 
Note that, by Proposition \ref{Bounded-H-mu-n vector valued}, $zf\in \mathcal H_{\mu,n}(\mathcal E)$.
Thus by taking limit as $R\to 1$ on the both sides of \eqref{auxilary}, we obtain the desired identity.
\end{proof}
As an immediate corollary of the above proposition, we get the following interesting inclusion.
\begin{cor}\label{Containment of Hmu vector valued}
Let $\mu\in \mathcal M_+(\mathbb T,\mathcal B(\mathcal E))$ and $n\in\mathbb N$ and $f\in\mathcal O(\mathbb D,\mathcal E).$ If $D_{\mu,n}(f) < \infty,$  then    $D_{\mu,n-1}(f) < \infty.$ Consequently, we have $\mathcal H_{\mu,j+1}(\mathcal E) \subseteq \mathcal H_{\mu,j}(\mathcal E)$ for every $j\in\mathbb N.$
\end{cor}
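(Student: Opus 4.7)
The corollary is a direct descendant of the two key preceding results: Proposition \ref{Bounded-H-mu-n vector valued} (the coordinate function $z$ is a multiplier of $\mathcal H_{\mu,n}(\mathcal E)$) together with Proposition \ref{general diff. formula vector valued} (the difference identity $D_{\mu,n}(zf) - D_{\mu,n}(f) = D_{\mu,n-1}(f)$). So the plan is simply to chain these two facts.

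First I would fix $f \in \mathcal O(\mathbb D,\mathcal E)$ with $D_{\mu,n}(f) < \infty$, so that $f \in \mathcal H_{\mu,n}(\mathcal E)$. By Proposition \ref{Bounded-H-mu-n vector valued}, the function $zf$ also lies in $\mathcal H_{\mu,n}(\mathcal E)$, which is to say $D_{\mu,n}(zf) < \infty$. Plugging both finite quantities into Proposition \ref{general diff. formula vector valued} then yields
\[
D_{\mu,n-1}(f) \;=\; D_{\mu,n}(zf) - D_{\mu,n}(f) \;<\; \infty,
\]
which is the claim. Iterating (or applying the first statement with $n$ replaced by $j+1$) gives the inclusion $\mathcal H_{\mu,j+1}(\mathcal E) \subseteq \mathcal H_{\mu,j}(\mathcal E)$ for every $j\in\mathbb N$.

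The only genuine bookkeeping point, and what I would single out as the main (mild) obstacle, concerns the case $n=1$: there $D_{\mu,0}(f)$ is defined via a limit that must first be shown to exist. For this I would revisit the proof of Proposition \ref{general diff. formula vector valued} in this specific case. The refined identity in Proposition \ref{R diff vetor valued} with $n=0$ gives
\[
R^2 D(\mu, 0, R, f) \;=\; D(\mu, 1, R, zf) - R^2 D(\mu, 1, R, f),
\]
and by Proposition \ref{MCT of integral} each term on the right increases to a finite limit as $R\to 1$ (using that both $f$ and $zf$ lie in $\mathcal H_{\mu,1}(\mathcal E)$). Hence $\lim_{R\to 1} D(\mu, 0, R, f)$ exists and is finite, which is precisely the existence statement built into the definition of $D_{\mu,0}(f)$.

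Beyond this minor bookkeeping, no estimates or new machinery are required; the corollary is essentially a one-line consequence of the difference identity once the multiplier property is in hand.
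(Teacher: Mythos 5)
Your proposal is correct and follows exactly the paper's route: the corollary is stated there as an immediate consequence of Proposition \ref{general diff. formula vector valued}, whose proof already combines the multiplier property (Proposition \ref{Bounded-H-mu-n vector valued}) with the refined difference identity and the limit $R\to 1$, including the existence of the limit defining $D_{\mu,0}(f)$ in the case $n=1$. Your extra bookkeeping remark about that case is exactly the content absorbed into the paper's proof of that proposition.
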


In the following lemma, we present the converse of Proposition \ref{Bounded-H-mu-n vector valued}.
For any $f\in\mathcal O(\mathbb D, \mathcal E),$ let $Lf$ be the $\mathcal E$-valued function on the unit disc $\mathbb D,$ defined by 
\begin{align*}
Lf(z)&= \frac{f(z)-f(0)}{z},\quad z\in \mathbb D.
\end{align*}
Note that $Lf\in\mathcal O(\mathbb D, \mathcal E)$ and $D_{\mu,n}(zLf)=D_{\mu,n}(f)$ for every $n\in\mathbb N$ and $\mu\in \mathcal M_+(\mathbb T,\mathcal B(\mathcal E)).$

\begin{lemma}\label{Contractivity of L vector valued}
Let $\mu\in \mathcal M_+(\mathbb T,\mathcal B(\mathcal E))$ and $n\in\mathbb N.$
Then for every $f\in\mathcal O(\mathbb D, \mathcal E),$ we have  
\begin{itemize}
\item[(i)] $D_{\mu,n}(Lf)\leqslant D_{\mu,n}(f).$
\item[(ii)] $f\in\mathcal H_{\mu,n}(\mathcal E)$ if and only if $zf\in\mathcal H_{\mu,n}(\mathcal E).$
\end{itemize}
 \end{lemma}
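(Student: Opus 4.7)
The plan is to use the refined difference identity from Proposition \ref{R diff vetor valued} together with the passage to the limit $R\to 1$ from Proposition \ref{MCT of integral}. The key observation is that $zLf(z)=f(z)-f(0),$ so for $n\geqslant 1$ the constant $f(0)$ disappears after $n$-fold differentiation, giving $(zLf)^{(n)}=f^{(n)}$ on $\mathbb D.$ Consequently
\begin{align*}
D(\mu,n,R,zLf)=D(\mu,n,R,f),\qquad 0<R<1.
\end{align*}

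For part (i), assume without loss of generality that $D_{\mu,n}(f)<\infty,$ else there is nothing to prove. Apply Proposition \ref{R diff vetor valued} to the function $Lf\in\mathcal O(\mathbb D,\mathcal E)$ with order $n-1$ (so with $n$ in the statement of that proposition replaced by $n-1$), which yields
\begin{align*}
D(\mu,n,R,zLf)-R^{2}D(\mu,n,R,Lf)=R^{2}D(\mu,n-1,R,Lf)\geqslant 0.
\end{align*}
Combined with the identity above, this gives $R^{2}D(\mu,n,R,Lf)\leqslant D(\mu,n,R,f).$ Letting $R\to 1$ and using the monotone convergence statement in Proposition \ref{MCT of integral} (applied once with $Lf$ and once with $f$) then produces $D_{\mu,n}(Lf)\leqslant D_{\mu,n}(f),$ which is part (i).

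For part (ii), the forward implication is exactly Proposition \ref{Bounded-H-mu-n vector valued}. Conversely, suppose $zf\in\mathcal H_{\mu,n}(\mathcal E).$ Since $(zf)(0)=0,$ one has $L(zf)=f,$ so applying part (i) to $zf$ gives $D_{\mu,n}(f)=D_{\mu,n}(L(zf))\leqslant D_{\mu,n}(zf)<\infty,$ hence $f\in \mathcal H_{\mu,n}(\mathcal E).$

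There is no real obstacle here; the only subtlety is to avoid invoking the limiting difference identity of Proposition \ref{general diff. formula vector valued} directly on $Lf,$ because that would require knowing a priori that $Lf\in\mathcal H_{\mu,n}(\mathcal E)$ — which is essentially what we are trying to prove. Working at the level of the truncated integrals $D(\mu,n,R,\cdot),$ where the refined identity of Proposition \ref{R diff vetor valued} holds unconditionally on $\mathcal O(\mathbb D,\mathcal E),$ sidesteps this circularity, and the monotone convergence half of Proposition \ref{MCT of integral} delivers the inequality at the limit.
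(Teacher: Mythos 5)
Your proposal is correct and follows essentially the same route as the paper: both rest on the refined identity of Proposition \ref{R diff vetor valued} at the level of the truncated integrals $D(\mu,n,R,\cdot),$ the observation $D(\mu,n,R,zLf)=D(\mu,n,R,f),$ and the monotone limit $R\to 1$ from Proposition \ref{MCT of integral}. The only cosmetic difference is that the paper first derives $D_{\mu,n}(f)\leqslant D_{\mu,n}(zf)$ for all $f\in\mathcal O(\mathbb D,\mathcal E)$ and then substitutes $Lf$ for $f,$ whereas you apply the identity directly to $Lf;$ your closing remark about avoiding the circularity of Proposition \ref{general diff. formula vector valued} is exactly the point of working at the truncated level.
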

\begin{proof}
Suppose $f\in\mathcal O(\mathbb D, \mathcal E).$ As an application of Proposition \ref{R diff vetor valued}, for any $0<R<1$ and $n\in\mathbb N,$ we obtain  
\begin{align*}
R^2D(\mu, n, R, f) \leqslant D(\mu, n, R, zf).
\end{align*}
Now taking limit as $R\to 1$ on both sides, we get that $D_{\mu,n}(f)\leqslant D_{\mu,n}(zf).$ Since  $D_{\mu,n}(zLf)=D_{\mu,n}(f),$ it follows that $D_{\mu,n}(Lf)\leqslant D_{\mu,n}(f)$ for every $f\in\mathcal O(\mathbb D,\mathcal E).$ This shows that $f\in\mathcal H_{\mu,n}(\mathcal E)$ whenever $zf\in\mathcal H_{\mu,n}(\mathcal E).$ The remaining part follows from Proposition \ref{Bounded-H-mu-n vector valued}.
\end{proof}

The following  lemma can be thought of as a generalization of \cite[Lemma 3.3]{R}. This will be an essential ingredient in proving Theorem \ref{Radial domination}. We will see in Corollary \ref{Improvement tool} of Section \ref{sec3}  that this lemma is also valid for $n\geqslant 1$ and $j=0$.
\begin{lemma}\label{backward difference identity vector valued}
Let $n\geqslant 2$ and $\mu$ be a $\mathcal B(\mathcal E)$-valued semi-spectral measure on $\mathbb T.$ Then for any function $f$ in $\mathcal H_{\mu,n}(\mathcal E)$, we have
\begin{eqnarray}\label{j greater 1}
\sum\limits_{k=1}^{\infty}D_{\mu,j}(L^kf)=D_{\mu,j+1}(f), \quad 1\leqslant j\leqslant n-1.
\end{eqnarray}
\end{lemma}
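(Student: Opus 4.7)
The plan is to first establish the identity for functions in $\mathcal O(\overline{\mathbb D},\mathcal E),$ where the explicit power-series formula from Lemma \ref{Rydhe diff vector valued}(i) is available, and then bootstrap to general $f\in\mathcal H_{\mu,n}(\mathcal E)$ by dilation.

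For the first step, let $f(z)=\sum_{k\geqslant 0} a_k z^k \in \mathcal O(\overline{\mathbb D},\mathcal E).$ Since $L^k f(z)=\sum_{l\geqslant 0}a_{l+k}z^l$ is also in $\mathcal O(\overline{\mathbb D},\mathcal E),$ Lemma \ref{Rydhe diff vector valued}(i) gives
\begin{align*}
D_{\mu,j}(L^k f) = \sum_{p,q\geqslant j}\binom{p\wedge q}{j}\big\langle \hat\mu(q-p)a_{p+k},a_{q+k}\big\rangle
\end{align*}
with absolute convergence. Substituting $p'=p+k,$ $q'=q+k,$ summing over $k\geqslant 1,$ and interchanging the order of summation (justified by Tonelli for the nonnegative dominating sums, together with the finiteness of $\sum_{p',q'\geqslant j+1}\binom{p'\wedge q'}{j+1}|\langle \hat\mu(q'-p')a_{p'},a_{q'}\rangle|$ obtained from Lemma \ref{Rydhe diff vector valued}(i) applied to $f$ at order $j+1$), one collapses the inner sum via the hockey-stick identity $\sum_{k=1}^{M-j}\binom{M-k}{j}=\binom{M}{j+1}$ with $M=p'\wedge q',$ yielding the desired equality $\sum_{k\geqslant 1}D_{\mu,j}(L^k f)=D_{\mu,j+1}(f)$ for such $f.$

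For the second step, fix $f\in\mathcal H_{\mu,n}(\mathcal E)$ and $0<R<1.$ The $R$-dilation $f_R$ lies in $\mathcal O(\overline{\mathbb D},\mathcal E),$ and Proposition \ref{MCT of integral} furnishes an auxiliary semi-spectral measure $\lambda_R$ with $D(\mu,n,R,g)=D_{\lambda_R,n}(g_R)$ for every $g\in\mathcal O(\mathbb D,\mathcal E).$ Applying the first step to $f_R$ and $\lambda_R,$ and using the elementary identity $L^k(f_R)=R^k (L^k f)_R,$ one obtains
\begin{align*}
\sum_{k=1}^{\infty}R^{2k}\,D(\mu,j,R,L^k f)=D(\mu,j+1,R,f).
\end{align*}
As $R\uparrow 1,$ the right-hand side increases to $D_{\mu,j+1}(f)$ by Proposition \ref{MCT of integral} (which requires $j+1\geqslant 1$), while on the left each summand $R^{2k}\,D(\mu,j,R,L^k f)$ is a product of nonnegative factors that are individually nondecreasing in $R$ (the second factor by Proposition \ref{MCT of integral}, which requires $j\geqslant 1$), so monotone convergence on the counting measure of $\mathbb N$ identifies the limit of the left-hand side as $\sum_{k\geqslant 1}D_{\mu,j}(L^k f).$

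The main subtlety is the justification of the termwise interchange of the double summation in the first step, where the absolute convergence in Lemma \ref{Rydhe diff vector valued}(i) plays a crucial role. It is also worth noting that the constraint $1\leqslant j\leqslant n-1$ appearing in the lemma matches exactly what the argument requires: $j\geqslant 1$ to invoke monotonicity in $R,$ and $j+1\leqslant n$ to ensure that the limiting right-hand side $D_{\mu,j+1}(f)$ is finite (via Corollary \ref{Containment of Hmu vector valued}).
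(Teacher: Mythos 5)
Your proof is correct, but it reaches the key intermediate identity $\sum_{k\geqslant 1}R^{2k}D(\mu,j,R,L^kf)=D(\mu,j+1,R,f)$ by a genuinely different route than the paper. The paper never touches Taylor coefficients: it applies the refined difference identity of Proposition \ref{R diff vetor valued} to $f=zLf$ and iterates, producing a telescoping finite sum with remainder $R^{2k}D(\mu,j+1,R,L^kf)$, which it kills as $k\to\infty$ by combining the contractivity of $L$ (Lemma \ref{Contractivity of L vector valued}) with the finiteness of $D_{\mu,j+1}(f)$. You instead prove the full identity $\sum_{k\geqslant 1}D_{\mu,j}(L^kf)=D_{\mu,j+1}(f)$ outright for $f\in\mathcal O(\overline{\mathbb D},\mathcal E)$ from the coefficient formula of Lemma \ref{Rydhe diff vector valued}(i), with the interchange of summation licensed by the stated absolute convergence and the inner sum collapsed by the hockey-stick identity (the same identity the paper uses, but only later, in Lemma \ref{Dm}); you then transfer to general $f$ via the dilation measure $\lambda_R$ of Proposition \ref{MCT of integral}. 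Both arguments share the same dilation machinery and the same final monotone-convergence passage $R\uparrow 1$ (for which your observation that $j\geqslant 1$ is needed for monotonicity of $D(\mu,j,R,\cdot)$ in $R$ is exactly right). What your version buys is an explicit, self-contained combinatorial proof of the core identity that avoids the remainder estimate; what the paper's version buys is independence from the coefficient expansion, staying entirely within the operator-style difference identities already set up in Section 2. The one point worth making explicit if you write this up is the reindexing bound $1\leqslant k\leqslant (p'\wedge q')-j$ and the verification that terms with $p'\wedge q'\leqslant j$ contribute $\binom{p'\wedge q'}{j+1}=0$, so that the collapsed sum is exactly $D_{\mu,j+1}(f)$; as sketched, this is routine and the argument is sound.
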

\begin{proof} Let $f\in \mathcal H_{\mu,n}(\mathcal E)$ and $k\in\mathbb N$. It follows from Lemma \ref{Contractivity of L vector valued} that $L^kf \in \mathcal H_{\mu,n}(\mathcal E)$ and hence by Corollary \ref{Containment of Hmu vector valued}, $D_{\mu, j}(L^kf)$ is also finite for all $j=1,\ldots, n-1$. 
%
Let $0<R<1$ and $j\in\{1,\ldots, n-1\}$. Since $D(\mu, j+1, R, f)
=D(\mu, j+1, R, zLf),$ by Proposition \ref{R diff vetor valued}, we obtain that
\begin{align*}
D(\mu, j+1, R, f)
= R^2D(\mu, j+1, R, Lf)+R^2 D(\mu, j, R, Lf).
\end{align*}
Applying this equality repeatedly, one obtains
\begin{equation}\label{eqn backward diff vector valued}
D(\mu, j+1, R, f)= R^{2k}D(\mu, j+1, R, L^kf)+\sum_{i=1}^k R^{2i}D(\mu, j, R, L^if).
\end{equation}
By Corollary \ref{Containment of Hmu vector valued}, we have $f\in \mathcal H_{\mu,j+1}(\mathcal E)$ and consequently from Lemma \ref{Contractivity of L vector valued} we have $L^k f\in \mathcal H_{\mu,j+1}(\mathcal E)$. Note that $D(\mu, j+1, R, L^kf)$ increases to $D_{\mu,j+1}(L^kf)$ as $R\to1$. Now by repeated applications of Lemma \ref{Contractivity of L vector valued},  we get $D(\mu, j+1, R, L^kf)\leqslant D_{\mu,j+1}(L^kf)\leqslant D_{\mu,j+1}(f).$
Thus, it follows that $\lim_{k\to \infty}R^{2k}D(\mu, j+1, R, L^kf)=0$. Hence, by \eqref{eqn backward diff vector valued}, the series $\sum_{i=1}^\infty R^{2i}D(\mu, j, R, L^if)$ is convergent and 
\begin{equation}\label{eqn backward diff 1 vector valued}
\sum_{i=1}^\infty R^{2i}D(\mu, j, R, L^if)=D(\mu, j+1, R, f).
\end{equation}
Since for all  $i\in \mathbb N$, $D(\mu, j, R, L^if)$ increases to $D_{\mu,j}(L^if)$ as $R\to 1$, an application of the monotone convergence theorem completes the proof.
\end{proof}

\section{Approximations through dilations}\label{sec3}
Let $\mu \in \mathcal M_+(\mathbb T,\mathcal B(\mathcal E))$ and $n$ be a positive integer. For $0<r<1$ and $f\in\mathcal O(\mathbb D, \mathcal E),$ let  $f_r$ denote the $r$-dilation of $f,$ that is,  $f_r(z) := f(rz),\,\,z\in\overline{\mathbb D}.$
In this section, we will show that for every  $f\in \mathcal H_{\mu,n}(\mathcal E),$ we have $f_r \to f$ in $\mathcal H_{\mu,n}(\mathcal E)$ as $r\to 1.$ A standard approach to obtain this is to find a positive constant $C$ such that $D_{\mu,n}( f_r)\leqslant C D_{\mu,n}(f)$ holds for every $0 < r < 1.$ 
In case of $n=1$ and $\mathcal E=\mathbb C,$ in \cite[Proposition 3]{Sara}, Sarason proved that $D_{\mu,1}(f_r) \leqslant  D_{\mu,1}(f)$ for every  $f\in \mathcal H_{\mu,1}(\mathbb C)$ and $0 < r< 1,$ (see also \cite[Theorem 5.2]{RS}, \cite[Lemma 4.1]{Aleman} and \cite[Lemma 7.3.2]{Primer}). 
%
In what follows, we show that $D_{\mu,n}( f_r)\leqslant  D_{\mu,n}(f)$, $0 < r < 1$, $f\in\mathcal H_{\mu,n}(\mathcal E)$ holds for any positive integer $n$ and any complex separable Hilbert space $\mathcal E.$ 
Before we provide a proof of this, we choose to draw a proof for the base case, that is, the case of $n=1$ and arbitrary $\mathcal E$ in the following lemma.

\begin{lemma}\label{contractive of fr}
Let $\mu$ be a $\mathcal B(\mathcal E)$-valued semi-spectral measure on $\mathbb T.$ Then for any $f\in \mathcal H_{\mu,1}(\mathcal E)$ and $0 < r< 1,$ the inequlity  $D_{\mu,1}(f_r) \leqslant  D_{\mu,1}(f)$ holds.  
\end{lemma}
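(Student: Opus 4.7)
The plan is to reduce to the case where $f$ extends holomorphically beyond $\overline{\mathbb D},$ and then to carry out an explicit Fourier-side computation using Lemma~\ref{Rydhe diff vector valued}(i).

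\emph{Reduction.} For any $0 < R < 1,$ both $f_{\!_R}$ and $(f_{\!_R})_r = (f_r)_{\!_R}$ lie in $\mathcal O(\overline{\mathbb D},\mathcal E).$ Proposition~\ref{MCT of integral} gives
\begin{align*}
D(\mu,1,R,f) = D_{\lambda_R,1}(f_{\!_R}), \qquad D(\mu,1,R,f_r) = D_{\lambda_R,1}\bigl((f_{\!_R})_r\bigr),
\end{align*}
where $d\lambda_R(\zeta) = P_{\!\mu}(R\zeta)\,d\sigma(\zeta).$ Hence it suffices to prove the inequality $D_{\nu,1}(g_r) \leqslant D_{\nu,1}(g)$ for every $g \in \mathcal O(\overline{\mathbb D},\mathcal E)$ and every $\nu \in \mathcal M_+(\mathbb T,\mathcal B(\mathcal E));$ applied with $\nu = \lambda_R$ and $g = f_{\!_R}$ this yields $D(\mu,1,R,f_r) \leqslant D(\mu,1,R,f),$ and letting $R \uparrow 1$ via the monotone convergence statement of Proposition~\ref{MCT of integral} closes the argument.

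\emph{Base case.} Let $g \in \mathcal O(\overline{\mathbb D},\mathcal E).$ Applying Lemma~\ref{Rydhe diff vector valued}(i) to $g$ and to $g_r$ (whose $j$-th Taylor coefficient is $r^j \hat g(j)$) gives
\begin{align*}
D_{\nu,1}(g)-D_{\nu,1}(g_r) = \sum_{k,l=1}^\infty (k\wedge l)\bigl(1-r^{k+l}\bigr)\bigl\langle \hat\nu(l-k)\hat g(k),\hat g(l)\bigr\rangle.
\end{align*}
Writing $1-r^{k+l} = \int_r^1 (k+l)s^{k+l-1}\,ds$ and interchanging the sum and the integral (justified by the geometric decay of $\hat g(k)$ together with the bound $\|\hat\nu(j)\|\leqslant \|\nu(\mathbb T)\|$), this difference equals $\int_r^1 \Phi(s)\,ds,$ where
\begin{align*}
\Phi(s) = \sum_{k,l=1}^\infty (k\wedge l)(k+l)\,s^{k+l-1}\bigl\langle\hat\nu(l-k)\hat g(k),\hat g(l)\bigr\rangle.
\end{align*}
The key algebraic identity $(k\wedge l)(k+l) = kl + (k\wedge l)^2$ splits $\Phi(s) = \Phi_1(s)+\Phi_2(s).$ A direct rearrangement using $\hat\nu(l-k)=\int_{\mathbb T}\zeta^{k-l}\,d\nu(\zeta)$ gives
\begin{align*}
\Phi_1(s) = s\int_{\mathbb T}\bigl\langle d\nu(\zeta)\,g'(s\zeta), g'(s\zeta)\bigr\rangle \geqslant 0,
\end{align*}
while the pointwise identity $(k\wedge l)^2 = \sum_{j=1}^\infty (2j-1)\mathbf{1}_{k\geqslant j}\mathbf{1}_{l\geqslant j},$ combined with $L^j g(w) = \sum_{m\geqslant 0}\hat g(m+j)w^m,$ gives
\begin{align*}
\Phi_2(s) = \sum_{j=1}^\infty (2j-1)\,s^{2j-1}\int_{\mathbb T}\bigl\langle d\nu(\zeta)\,(L^j g)(s\zeta),\,(L^j g)(s\zeta)\bigr\rangle \geqslant 0.
\end{align*}
Both quantities are non-negative because $\nu$ is positive operator-valued; hence $\Phi(s) \geqslant 0$ throughout $(r,1),$ completing the base case.

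The main technical issue is bookkeeping the rearrangements of the double series in the base case (the split by $(k\wedge l)(k+l) = kl + (k\wedge l)^2$ and the layer-cake identity for $(k\wedge l)^2$); these are routine once one uses that $g \in \mathcal O(\overline{\mathbb D},\mathcal E)$ has Taylor coefficients with geometric decay, which makes every series encountered absolutely convergent.
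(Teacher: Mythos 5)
Your proof is correct, and the outer structure (reduce to $g\in\mathcal O(\overline{\mathbb D},\mathcal E)$ via the measures $\lambda_R$ and Proposition \ref{MCT of integral}, then let $R\uparrow 1$) is exactly the paper's. Where you genuinely diverge is in the base case. The paper establishes $D_{\nu,1}(g_r)\leqslant D_{\nu,1}(g)$ by observing that the coefficient matrix $A_{k,l}=(k\wedge l)\hat\nu(l-k)$ is formally positive semi-definite and satisfies $(\sigma^*-I)^2A=0$ (via Proposition \ref{general diff. formula vector valued}), and then quoting Shimorin's abstract result \cite[Theorem 3.11]{Shimorin} to conclude that $\bigl((1-r^{k+l})A_{k,l}\bigr)$ is again formally positive semi-definite. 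You instead prove that same positivity by hand: writing $1-r^{k+l}=\int_r^1(k+l)s^{k+l-1}\,ds$, splitting via $(k\wedge l)(k+l)=kl+(k\wedge l)^2$, and identifying the two pieces as $s\int_{\mathbb T}\langle d\nu(\zeta)g'(s\zeta),g'(s\zeta)\rangle$ and $\sum_j(2j-1)s^{2j-1}\int_{\mathbb T}\langle d\nu(\zeta)(L^jg)(s\zeta),(L^jg)(s\zeta)\rangle$, both manifestly nonnegative; I checked the algebra ($(k\wedge l)(k+l)=kl+(k\wedge l)^2$, the layer-cake identity $(k\wedge l)^2=\sum_{j\geqslant 1}(2j-1)\mathbf 1_{k\geqslant j}\mathbf 1_{l\geqslant j}$, and the exponent bookkeeping) and it is right, with all rearrangements justified by the geometric decay of $\hat g$. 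What your route buys is a self-contained, quantitative proof that does not import Shimorin's matrix theorem and that exhibits $D_{\nu,1}(g)-D_{\nu,1}(g_r)$ explicitly as an integral of positive quantities (in the spirit of Sarason's scalar argument); what the paper's route buys is brevity and the reuse of a general principle that applies to any formally positive matrix annihilated by $(\sigma^*-I)^2$. Either is acceptable.
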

\begin{proof}
Let $p$ be a $\mathcal E$-valued polynomial given by $p(z) = \sum_{j=0}^d c_j z^j.$ By Lemma \ref{Rydhe diff vector valued}, we have 
\begin{align}\label{poly form}
D_{\mu,1}(p)= \sum\limits_{k,l=1}^{d} (k\wedge l)\big\langle \hat{\mu}(l-k)c_k, c_l \big\rangle.
\end{align}
Consider the matrix  $A = (\!( A_{k,l})\!)_{k,l=0}^{\infty},$ where $A_{k,l} = (k\wedge l)\hat{\mu}(l-k)$ for $k,l \geqslant 0.$ In view of \eqref{poly form}, it follows that the  matrix $A$ is formally positive semi-definite. 
Let $\sigma^* A $ be the infinite matrix whose $(k,l)$-th element is given by $(\sigma^* A)_{k,l}= A_{k+1,l+1}$ for $k,l\geqslant 0.$ By  Proposition \ref{general diff. formula vector valued}, we also have 
$$D_{\mu,1}(z^2p)-2 D_{\mu,1}(zp) + D_{\mu,1}(p)= D_{\mu,0}(zp)-D_{\mu,0}(p) = 0.$$ 
This is equivalent to saying that $(\sigma ^* -I)^2 A=0.$ Thus from \cite[Theorem 3.11]{Shimorin}, it follows that the matrix $(\!( (1-r^{k+l})A_{k,l})\!)_{k,l=0}^{\infty}$ is formally positive semi-definite for every $0 < r < 1.$ 
This gives us $D_{\mu,1}(p_r)\leqslant D_{\mu,1}(p)$ for every $\mathcal E$-valued polynomial $p$ and $0 < r < 1.$ 
By a simple uniform limit argument, it follows that for each $r\in (0,1),$ we have  
\begin{align*}
D_{\mu,1}(f_r)\leqslant D_{\mu,1}(f),
\end{align*}
 whenever $f$ is a $\mathcal E$-valued holomorphic function defined on a neighbourhood of the closed unit disc, that is, $f\in\mathcal O(\overline{\mathbb D},\mathcal E).$ Now let $f\in \mathcal H_{\mu,1}(\mathcal E)$ and $r$ be an arbitrary but fixed number in $(0,1).$
Let $ 0< R < 1$ and $\lambda_R$ be the $\mathcal B(\mathcal E)$-valued semi-spectral measure on $\mathbb T$ given by $d\lambda_R(\zeta)= P_{\mu}(R\zeta)d\sigma (\zeta).$ Since $f_R$ is in $\mathcal O(\overline{\mathbb D},\mathcal E),$ we obtain $D_{\lambda_R,1}(({f_R})_r)\leqslant D_{\lambda_R,1}(f_R).$ 
Since $(f_R)_r= (f_r)_R,$ it follows that 
\begin{align*}
D_{\lambda_R,1}(({f_r})_R)\leqslant D_{\lambda_R,1}(f_R).
\end{align*}
Thus we obtain that $D(\mu, 1, R, f_r) \leqslant D(\mu, 1, R, f).$ This holds for every $R\in (0,1).$ By taking limit $R\to 1,$ we obtain the desired result. 
\end{proof}

Now we provide the proof of the approximation result, Theorem \ref{Radial domination}.
\begin{proof}[\textbf{Proof of Theorem \ref{Radial domination}}]
 Using induction, we shall first prove that $D_{\mu,n}(f_r)\leqslant D_{\mu,n}(f)$ for each $f\in\mathcal H_{\mu,n}(\mathcal E)$ and $0<r<1.$
Lemma \ref{contractive of fr} precisely deals with the case $n=1$. 
Fix a positive integer $n\geqslant 2$ and let the claim holds for every $f\in\mathcal H_{\mu,n}(\mathcal E)$ and $0<r<1.$ Let $f\in\mathcal H_{\mu,n+1}(\mathcal E)$ and $0 < r < 1.$ 
Note that $(Lf_r)(z)=r(Lf)_r(z) $ for every $z\in\mathbb D.$ A simple induction argument will give us $(L^kf_r)(z)=r^k(L^kf)_r(z) $ for every $k\in\mathbb N,$ $z\in\mathbb D.$ Thus we have $L^kf_r= r^k(L^kf)_r$ for every $k\in\mathbb N.$ It follows that $D_{\mu,n}(L^kf_r)= r^{2k} D_{\mu,n}((L^kf)_r)$ for every $k\in\mathbb N.$ As $L$ acts contractively on $\mathcal H_{\mu,n+1}(\mathcal E),$ we have $L^kf\in \mathcal H_{\mu,n+1}(\mathcal E)$ for every $k\in\mathbb N,$ see Lemma \ref{Contractivity of L vector valued}. Using Corollary \ref{Containment of Hmu vector valued}, we have $L^kf\in \mathcal H_{\mu,n}(\mathcal E)$ for every $k\in\mathbb N.$ Now applying induction hypothesis, we obtain that 
\begin{align*}
D_{\mu,n}(L^kf_r)= r^{2k} D_{\mu,n}((L^kf)_r) \leqslant r^{2k}D_{\mu,n}(L^kf) < D_{\mu,n}(L^kf),\,\,\,k\in\mathbb N.
\end{align*}
An application of Lemma \ref{backward difference identity vector valued} will give us 
\begin{align*}
D_{\mu,n+1}(f_r) = \sum\limits_{k=1}^{\infty}D_{\mu,n}(L^kf_r) \leqslant \sum\limits_{k=1}^{\infty}D_{\mu,n}(L^kf) =  D_{\mu,n+1}(f).
\end{align*}
This completes the proof for the first part of Theorem \ref{Radial domination}.

The technique to prove the remaining part of Theorem \ref{Radial domination} is standard, see for instance \cite[Theorem 7.3.1]{Primer}. Nevertheless we include  the details for the sake of completeness. For every positive integer $n,$ using parallelogram identity, we have 
\begin{align*}
D_{\mu,n}(f_r-f)+D_{\mu,n}(f_r+f)= 2 D_{\mu,n}(f_r) + 2D_{\mu,n}(f), \quad f\in \mathcal H_{\mu,n}(\mathcal E).
\end{align*}
Using the first part of Theorem \ref{Radial domination}, we have $D_{\mu,n}(f_r) \leqslant D_{\mu,n}(f)$ for every $0 < r <1.$ Note that for every $z\in\mathbb D,$ we have $f_r^{(n)}(z)\rightarrow f^{(n)}(z)$ as $r\rightarrow 1.$  Applying  Fatou's lemma, we obtain $D_{\mu,n}(2f) \leqslant \liminf\limits_{r\to 1} D_{\mu,n}(f_r+f).$ Hence it follows that 
\begin{align*}
\limsup\limits_{r\to 1} D_{\mu,n}(f_r-f) \leqslant 0.
\end{align*}
Since $D_{\mu,n}(f_r-f) \geqslant 0$ for every $0 < r <1,$ we conclude that $\lim\limits _{r \to 1}D_{\mu,n}(f_r-f)= 0.$ This completes the proof of the theorem. 
\end{proof}

\begin{cor}\label{polynomial approx}
Let $\mu$ be a $\mathcal B(\mathcal E)$-valued semi-spectral measure on the unit circle $\mathbb T,$ $n\in\mathbb N$ and $f\in \mathcal H_{\mu,n}(\mathcal E).$ Then there exists a sequence of polynomials $\{p_k\}$ such that $D_{\mu,n}(p_k-f)\to 0$ as $k\to\infty.$
\end{cor}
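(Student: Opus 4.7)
The plan is a standard two-step approximation. First, use dilations to reduce to functions holomorphic on a neighbourhood of $\overline{\mathbb D}$, and then approximate each dilation by the partial sums of its Taylor series.

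Fix $f \in \mathcal H_{\mu,n}(\mathcal E)$ and $\varepsilon > 0$. By Theorem \ref{Radial domination}, there exists $r \in (0,1)$ with $D_{\mu,n}(f_r - f) < \varepsilon/4$. Since $f_r \in \mathcal O(\overline{\mathbb D}, \mathcal E)$, its Taylor series $\sum_{j=0}^\infty \hat{f_r}(j) z^j$ converges to $f_r$ uniformly on $\overline{\mathbb D}$; by Cauchy's estimates, the same is true for every derivative. Let $s_N$ denote the $N$-th partial sum of this Taylor series. Then $s_N^{(n)} \to f_r^{(n)}$ uniformly on $\overline{\mathbb D}$. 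Because $\|P_{\!\mu}(z)\|$ is bounded on $\overline{r'\mathbb D}$ for each $r' < 1$ and the total mass $\mu(\mathbb T)$ is finite, the integrand in $D_{\mu,n}(s_N - f_r)$ is dominated by a constant times $\|(s_N - f_r)^{(n)}\|_{\mathcal E}^2$ over $\mathbb D$, which tends to $0$ uniformly. Hence $D_{\mu,n}(s_N - f_r) \to 0$ as $N \to \infty$ (alternatively, one can apply Lemma \ref{Rydhe diff vector valued}(i) directly to $s_N - f_r$ and observe that the absolutely convergent series defining this quantity tends to $0$).

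Choose $N$ large enough that $D_{\mu,n}(s_N - f_r) < \varepsilon/4$, and set $p = s_N$. By the triangle inequality for the semi-norm $\sqrt{D_{\mu,n}(\cdot)}$ on $\mathcal H_{\mu,n}(\mathcal E)$,
\begin{align*}
\sqrt{D_{\mu,n}(p - f)} \leqslant \sqrt{D_{\mu,n}(p - f_r)} + \sqrt{D_{\mu,n}(f_r - f)} < \sqrt{\varepsilon}.
\end{align*}
Taking $\varepsilon = 1/k^2$ for each $k \in \mathbb N$ and letting $p_k$ be the resulting polynomial, we obtain $D_{\mu,n}(p_k - f) \to 0$.

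The only mild technicality is step two, namely passing from uniform convergence of $s_N^{(n)}$ to $f_r^{(n)}$ on $\overline{\mathbb D}$ to convergence in the weighted Dirichlet integral. This is not a real obstacle since $f_r \in \mathcal O(\overline{\mathbb D}, \mathcal E)$ makes both the Poisson integral $P_{\!\mu}(z)$ bounded on the support of integration and the derivatives uniformly small; the formal argument uses either dominated convergence or the explicit Taylor-coefficient formula of Lemma \ref{Rydhe diff vector valued}(i).
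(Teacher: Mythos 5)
Your proof follows the paper's argument exactly: first approximate $f$ by a dilation $f_r$ using Theorem \ref{Radial domination}, then approximate $f_r\in\mathcal O(\overline{\mathbb D},\mathcal E)$ by the partial sums of its Taylor series, and combine via the triangle inequality for the semi-norm $\sqrt{D_{\mu,n}(\cdot)}$ (which you handle slightly more carefully than the paper does). The one caveat concerns your primary justification of the truncation step: the claim that the integrand of $D_{\mu,n}(s_N-f_r)$ is dominated by a constant times $\|(s_N-f_r)^{(n)}\|_{\mathcal E}^2$ requires $P_{\!\mu}(z)(1-|z|^2)^{n-1}$ to be bounded on $\mathbb D$, which by the estimate \eqref{P-1-z vector-valued} holds for $n\geqslant 2$ but fails for $n=1$, where $P_{\!\mu}(z)$ grows like $(1-|z|^2)^{-1}$ near the boundary; so as stated this dominated-convergence route has a gap in the base case (it can be repaired in the scalar case by using $M^2P_{\!\mu}(z)$ as the dominating function, since $\int_{\mathbb D}P_{\!\mu}(z)\,dA(z)=\mu(\mathbb T)$, but this needs more care for operator-valued $\mu$). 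Your parenthetical alternative --- that $D_{\mu,n}(f_r-s_N)$ is the tail of the absolutely convergent double series of Lemma \ref{Rydhe diff vector valued}(i) applied to $f_r$, hence tends to zero --- is precisely the paper's justification, works uniformly in $n$ and in $\mathcal E$, and makes the argument complete.
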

\begin{proof}
Let $\epsilon > 0.$ It is sufficient to show that there exists a polynomial $p$ such that $D_{\mu,n}(p-f) < \epsilon.$ By Theorem \ref{Radial domination}, there exists a $R\in (0,1)$ such that $D_{\mu,n}(f_{\!_R}-f) < \frac{\epsilon }{2}.$ Since  $f_{\!_R}\in \mathcal O(\overline{\mathbb D},\mathcal E),$ the power series expansion of $f_{\!_R}$ about origin converges uniformly on a neighbourhood of the closed unit disc $\overline{\mathbb D}.$ Let  $s_j(f_{\!_R})$ be the $j$-th partial sum of the associated power series of $f_{\!_R}.$ 
By Lemma \ref{Rydhe diff vector valued}(i), it follows that there exists $\ell\in\mathbb N$ such that $D_{\mu,n}(f_{\!_R} - s_{\ell}(f_{\!_R}))<\epsilon/2.$ Hence $D_{\mu,n}(f- s_{\ell}(f_{\!_R}))<\epsilon,$ completing the proof.
\end{proof}

We conclude this section with the following corollary which improves  Lemma \ref{backward difference identity vector valued} as promised earlier. The argument used  in the proof of the first part is essentially same as \cite[Remark, pg 210]{Rinv}.
\begin{cor}\label{Improvement tool}
Let $\mu$ be a $\mathcal B(\mathcal E)$-valued semi-spectral measure on the unit circle $\mathbb T,$ $n\in\mathbb N$ and $f\in \mathcal H_{\mu,n}(\mathcal E).$ Then the following statements hold:
\begin{itemize}
\item[\rm (i)]$D_{\mu,n}(L^kf) \to 0$ as $k\to\infty.$
\item[\rm (ii)]$\sum\limits_{k=1}^{\infty}D_{\mu,0}(L^kf)=D_{\mu,1}(f).$
\end{itemize}
\end{cor}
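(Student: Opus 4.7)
The plan is to establish (i) first by combining the contractivity of $L$ with the fact that $L^k$ annihilates any polynomial of degree less than $k$, and then to deduce (ii) by telescoping the identity in Proposition \ref{general diff. formula vector valued} at order $n = 1$.

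For (i), I would fix $\varepsilon > 0$ and use Corollary \ref{polynomial approx} to select a polynomial $p$ with $D_{\mu,n}(f - p)^{1/2} < \varepsilon$. Since $\sqrt{D_{\mu,n}(\cdot)}$ is a semi-norm on $\mathcal H_{\mu,n}(\mathcal E)$ and $L$ acts contractively on this space by Lemma \ref{Contractivity of L vector valued}(i), iterating the triangle inequality yields
\[
\sqrt{D_{\mu,n}(L^k f)} \;\leqslant\; \sqrt{D_{\mu,n}(L^k(f - p))} + \sqrt{D_{\mu,n}(L^k p)} \;\leqslant\; \sqrt{D_{\mu,n}(f - p)} + \sqrt{D_{\mu,n}(L^k p)}
\]
for every $k \in \mathbb N$. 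Because $L^k p = 0$ as soon as $k$ exceeds the degree of $p$, the right-hand side is less than $\varepsilon$ for all sufficiently large $k$, which gives $D_{\mu,n}(L^k f) \to 0$.

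For (ii), I would first note that $f \in \mathcal H_{\mu,1}(\mathcal E)$ by Corollary \ref{Containment of Hmu vector valued} and that every iterate $L^k f$ stays in $\mathcal H_{\mu,1}(\mathcal E)$ by Lemma \ref{Contractivity of L vector valued}(i). Applying Proposition \ref{general diff. formula vector valued} with $n = 1$ to $L^{k+1} f$, and using $L^k f = (L^k f)(0) + z\, L^{k+1} f$ together with the fact that $D_{\mu,1}$ vanishes on constants, I obtain
\[
D_{\mu,1}(L^k f) - D_{\mu,1}(L^{k+1} f) = D_{\mu,0}(L^{k+1} f), \qquad k \geqslant 0.
\]
Summing this telescope from $k = 0$ to $k = N - 1$ gives $\sum_{k=1}^{N} D_{\mu,0}(L^k f) = D_{\mu,1}(f) - D_{\mu,1}(L^N f)$, and passing to the limit $N \to \infty$ while applying part (i) with $n = 1$ delivers the identity in (ii).

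The main subtlety I anticipate is in invoking Proposition \ref{general diff. formula vector valued} at $n = 1$: one must check that $D_{\mu,0}(L^{k+1} f)$ is genuinely defined, which follows from Corollary \ref{Containment of Hmu vector valued} because $L^{k+1} f \in \mathcal H_{\mu,1}(\mathcal E)$ forces the defining limit to exist and be finite, and that $D_{\mu,1}(c) = 0$ for any constant $c \in \mathcal E$, which is immediate since the first derivative of a constant vanishes. Once these two points are in place the telescoping is routine and (i) closes the argument.
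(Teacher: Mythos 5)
Your proposal is correct and follows essentially the same route as the paper: part (i) via polynomial approximation (Corollary \ref{polynomial approx}), contractivity of $L$, and the vanishing of $L^k p$ for $k > \deg p$; part (ii) by telescoping the order-one difference identity of Proposition \ref{general diff. formula vector valued} together with $D_{\mu,1}(zL^{k}f)=D_{\mu,1}(L^{k-1}f)$ and then invoking part (i) at $n=1$. The only cosmetic difference is that the paper observes directly that $L^k f = L^k(f-p)$ for $k\geqslant \deg(p)+1$ rather than passing through the semi-norm triangle inequality.
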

\begin{proof}
Let $\epsilon > 0.$ By Corollary \ref{polynomial approx}, there exists a polynomial $p$ such that $D_{\mu,n}(f-p)<\epsilon.$ Using Lemma \ref{Contractivity of L vector valued}, we obtain that
for all $k\geqslant {\rm deg}(p)+1$, $$D_{\mu,n}(L^kf)=D_{\mu,n}(L^k(f-p))\leqslant D_{\mu,n}(f-p)<\epsilon.$$
This completes the proof of the first part. For the second part, note that by Lemma \ref{Contractivity of L vector valued} and Corollary \ref{Containment of Hmu vector valued}, $D_{\mu,0}(L^kf)$ is finite for each $k\in\mathbb N$. Using Proposition 2.7, we get that for each $j\in\mathbb N,$
\begin{eqnarray*}
\sum_{k=1}^{j}D_{\mu,0}(L^kf) &=& \sum_{k=1}^{j}\big(D_{\mu,1}(zL^{k}f)-D_{\mu,1}(L^kf)\big)\\ 
&=& \sum_{k=1}^{j}\big(D_{\mu,1}(L^{k-1}f)-D_{\mu,1}(L^kf)\big)\\
&=& D_{\mu,1}(f) - D_{\mu,1}(L^{j}f).
\end{eqnarray*}
The proof is now completed by applying the first part.
\end{proof}

\section{Weighted Dirichlet-type Spaces and analytic $m$-isometric operators}

Let $\mu$ be a $\mathcal B(\mathcal E)$-valued semi-spectral measure on $\mathbb T,$ $j\in\mathbb N$ and $f$ be an arbitrary but fixed function in $\mathcal H_{\mu,j}(\mathcal E).$  For any $n\in \mathbb Z_+,$ we consider $\triangle^{n} D_{\mu,j}(f),$ the $n$-th order forward difference  of $D_{\mu,j}(f),$ defined by 
\begin{align*}
\triangle^{n} D_{\mu,j}(f) := \sum\limits_{k=0}^n (-1)^{n-k} {\binom{n}{k}} D_{\mu,j}(z^kf),\,\,\,f\in \mathcal H_{\mu,j}(\mathcal E).
\end{align*}
Note that $\triangle^{k+1} D_{\mu,j}(f)= \triangle^{k} D_{\mu,j}(zf)-\triangle^{k} D_{\mu,j}(f)$ holds for every $k\in\mathbb Z_+.$ Now an induction argument together with the application of Proposition \ref{general diff. formula vector valued}, it follows that $\triangle^n D_{\mu,j}(f) = D_{\mu,j-n}(f)$ for every $0\leqslant n \leqslant j.$ Since $f\in \mathcal H_{\mu,j}(\mathcal E),$ by Proposition \ref{Bounded-H-mu-n vector valued} and Corollary \ref{Containment of Hmu vector valued}, we have $D_{\mu, 0}(zf) < \infty$ and $D_{\mu, 0}(f) < \infty.$ It is also straightforward to verify that $D_{\mu, 0}(zf)-D_{\mu,0}(f)= 0.$
 Hence we obtain the following
\begin{align}\label{forward diff id vector valued}
\Delta^n D_{\mu, j}(f) &=
\begin{cases}
D_{\mu, j-n}(f),\quad 0\leqslant n \leqslant j,\\
0,\quad \quad \quad \quad \quad n\geqslant j+1,
\end{cases} \quad f\in\mathcal H_{\mu,j}(\mathcal E), \,j\in\mathbb N.
\end{align}

Let $m\geqslant 2$ and  $\pmb\mu=(\mu_1, \ldots ,\mu_{m-1})$ be an $(m-1)$-tuple of $\mathcal B(\mathcal E)$-valued semi-spectral measures on $\mathbb T.$ In this section we will introduce a Hilbert space $\mathcal H_{\pmb\mu}(\mathcal E),$ called weighted Dirichlet-type space associated to $(m-1)$-tuple $\pmb\mu$ of semi-spectral measures, on which the operator $M_z$ acts as an analytic $m$-isometry.  Let $\mathcal H_{\pmb\mu}(\mathcal E)$ denote the linear space given by
\begin{align*}
\mathcal H_{\pmb\mu} (\mathcal E):&= \bigcap\limits_{j=1}^{m-1} \mathcal H_{\mu_j,j}(\mathcal E)\bigcap H^2({\mathcal E})\\&=\Big\{f\in \mathcal O(\mathbb D,\mathcal E): D_{\mu_j,j}(f)<\infty ~\mbox{for}~j=1,\ldots,m-1\Big\}\bigcap H^2({\mathcal E}).
\end{align*}
We associate a norm $\|\cdot\|_{\pmb\mu}$ to the linear space $\mathcal H_{\pmb\mu}(\mathcal E)$ given by
\[\|f\|_{\pmb\mu}^2:=\|f\|^2_{\!_{H^2(\mathcal E)}}+\sum_{j=1}^{m-1} D_{\mu_j,j}(f),\]
where $\|f\|_{\!_{H^2(\mathcal E)}}$ denotes the Hardy norm of $f$ for any $f\in H^2(\mathcal E).$ 
Note that, if there exists a $j\in \{1,\ldots,m-1\}$ such that $\mu_j(\mathbb T)$ is invertible then by Corollary  \ref{containment in the Hardy space}, $\mathcal H_{\mu_j,j}(\mathcal E)\subseteq H^2(\mathcal E)$, and therefore
$\mathcal H_{\pmb\mu} (\mathcal E)$ coincides with $ \cap_{j=1}^{m-1} \mathcal H_{\mu_j,j}(\mathcal E).$
It is straightforward to verify that the linear space $\mathcal H_{\pmb\mu}(\mathcal E)$ is a Hilbert space with respect to the norm $\|\cdot\|_{\pmb\mu}.$ Let $z\in\mathbb D$ and $x\in\mathcal E.$ Consider the evaluation map $ev_{z,x}:\mathcal H_{\pmb\mu}(\mathcal E) \rightarrow \mathbb C$ defined by 
$ev_{z,x}(f)= \langle f(z),x\rangle,$  $f\in\mathcal H_{\pmb\mu}(\mathcal E).$ Since $\|f\|_{\!_{H^2(\mathcal E)}} \leqslant \|f\|_{\pmb\mu}$ for every $f\in \mathcal H_{\pmb\mu}(\mathcal E),$ it follows that the evaluation map $ev_{z,x}$ is bounded for every $z\in\mathbb D$ and $x\in\mathcal E.$ Thus the Hilbert space $\mathcal H_{\pmb\mu}(\mathcal E)$ is a reproducing kernel Hilbert space (see \cite{ARO,PAULRKHS} for definition and other basic properties of reproducing kernel Hilbert spaces). 
\begin{theorem}\label{bounded m iso vector valued}
Suppose $m\geqslant 2$ and $\pmb\mu=(\mu_1,\ldots ,\mu_{m-1})$ is an $(m-1)$-tuple of $\mathcal B(\mathcal E)$-valued semi-spectral measures on $\mathbb T.$
Then the multiplication operator $M_z$ on $\mathcal H_{\pmb\mu}(\mathcal E)$ is a bounded, analytic  $m$-isometry. 
\end{theorem}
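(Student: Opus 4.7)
The plan is to break the theorem into three independent tasks: first, showing that $M_z$ maps $\mathcal{H}_{\pmb\mu}(\mathcal{E})$ into itself and is bounded; second, verifying the $m$-isometry identity $\beta_m(M_z) = 0$; third, checking analyticity.

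To begin, if $f \in \mathcal{H}_{\pmb\mu}(\mathcal{E})$ then $zf \in H^2(\mathcal{E})$ trivially, and Proposition \ref{Bounded-H-mu-n vector valued} guarantees $zf \in \mathcal{H}_{\mu_j, j}(\mathcal{E})$ for each $j = 1, \ldots, m-1$, so $zf \in \mathcal{H}_{\pmb\mu}(\mathcal{E})$ and $M_z$ is well-defined on $\mathcal{H}_{\pmb\mu}(\mathcal{E})$. For boundedness I would appeal to the closed graph theorem: since $\mathcal{H}_{\pmb\mu}(\mathcal{E})$ is a reproducing kernel Hilbert space, norm convergence implies pointwise convergence on $\mathbb{D}$, so if $f_n \to f$ and $z f_n \to g$ in the $\pmb\mu$-norm, evaluating at each point of $\mathbb{D}$ forces $g(z) = z f(z)$, i.e.\ $g = M_z f$. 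Hence the graph of $M_z$ is closed and $M_z$ is bounded.

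For the $m$-isometric identity I would compute the quadratic form $\langle \beta_m(M_z) f, f \rangle$ directly. From the definition of $\beta_m$ and $\|\cdot\|_{\pmb\mu}$,
\[
\langle \beta_m(M_z) f, f \rangle = \sum_{k=0}^{m} (-1)^{m-k} \binom{m}{k} \|z^k f\|_{\pmb\mu}^2 = \sum_{k=0}^{m} (-1)^{m-k} \binom{m}{k} \Big( \|f\|_{H^2(\mathcal{E})}^2 + \sum_{j=1}^{m-1} D_{\mu_j, j}(z^k f) \Big).
\]
The Hardy contribution is constant in $k$ and therefore annihilated by the $m$-th forward difference. The remaining terms organize into $\sum_{j=1}^{m-1} \triangle^m D_{\mu_j, j}(f)$, and identity \eqref{forward diff id vector valued} (itself a consequence of Proposition \ref{general diff. formula vector valued}) asserts that $\triangle^n D_{\mu, j}(f) = 0$ whenever $n \geq j+1$. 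Since $m \geq j+1$ for every $j \in \{1, \ldots, m-1\}$, every summand vanishes. By self-adjointness of $\beta_m(M_z)$, its quadratic form being identically zero yields $\beta_m(M_z) = 0$.

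Finally, analyticity is immediate: any $g \in \bigcap_{n \geq 0} M_z^n \mathcal{H}_{\pmb\mu}(\mathcal{E}) = \bigcap_{n \geq 0} z^n \mathcal{H}_{\pmb\mu}(\mathcal{E})$ must vanish to infinite order at $0$ (as $g/z^n$ is holomorphic on $\mathbb{D}$ for every $n$), so $g \equiv 0$ and $\mathcal{H}_\infty(M_z) = \{0\}$. I do not foresee a serious obstacle in any of these steps; the substantive technical content has already been packaged into the multiplier property (Proposition \ref{Bounded-H-mu-n vector valued}) and the higher-order difference identity \eqref{forward diff id vector valued}, so the proof here amounts to bookkeeping combined with one appeal to the closed graph theorem.
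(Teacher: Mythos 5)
Your proposal is correct and follows essentially the same route as the paper: invariance of $\mathcal H_{\pmb\mu}(\mathcal E)$ under $M_z$ via the multiplier property, boundedness via the closed graph theorem in the reproducing kernel Hilbert space, the $m$-isometry identity by reducing $\langle\beta_m(M_z)f,f\rangle$ to $\sum_{j=1}^{m-1}\triangle^m D_{\mu_j,j}(f)$ and invoking \eqref{forward diff id vector valued}, and analyticity from $\mathcal H_{\pmb\mu}(\mathcal E)\subseteq\mathcal O(\mathbb D,\mathcal E)$. The only difference is cosmetic: you spell out the vanishing-to-infinite-order argument for the hyper-range, which the paper leaves implicit.
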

\begin{proof}
By Corollary \ref{Contractivity of L vector valued}, $zf\in \mathcal H_{\pmb\mu}(\mathcal E)$ whenever $f\in\mathcal H_{\pmb\mu}(\mathcal E).$ Since $\mathcal H_{\pmb\mu}(\mathcal E)$ is a reproducing kernel Hilbert space, by closed graph theorem, it follows that $M_z$ on $\mathcal H_{\pmb\mu}(\mathcal E)$ is bounded. As $\mathcal H_{\pmb\mu}(\mathcal E)$ is contained in $ \mathcal O(\mathbb D,\mathcal E),$ the operator $M_z$ on $\mathcal H_{\pmb\mu}(\mathcal E)$ is analytic. Note that
\begin{align*}
\sum\limits_{j=0}^{m} \binom{m}{j}(-1)^{m-j}\|z^{j}f\|^2_{\pmb\mu}= \sum\limits_{k=1}^{m-1} \Delta^{m} D_{\mu_k,k}(f),\quad f\in\mathcal H_{\pmb\mu}(\mathcal E).
\end{align*}
In view of \eqref{forward diff id vector valued}, it follows that the operator $M_z$ on $\mathcal H_{\pmb\mu}(\mathcal E)$ is an $m$-isometry.
\end{proof}

Following Lemma \ref{Contractivity of L vector valued}(ii), we see that every function $f$ in $\mathcal H_{\pmb\mu}(\mathcal E)$ has the following decomposition.
\begin{align*}
f(z)= f(0) + zg(z), \,\quad g\in \mathcal H_{\pmb\mu}(\mathcal E).
\end{align*}
Since $\langle f(0) , zg\rangle_{\pmb\mu}= 0,$ for every $g\in \mathcal H_{\pmb\mu}(\mathcal E),$ it follows that 
\begin{align}\label{cokernel vector valued}
\ker M_z^*= \mathcal E.
\end{align}
It is straightforward to see that $\mathcal H_{\pmb\mu}(\mathcal E)$ contains the set of all $\mathcal E$-valued polynomials. 
In the following proposition, we show that the set of all $\mathcal E$-valued polynomials is dense in $\mathcal H_{\pmb\mu}(\mathcal E).$ 
In the case of $m=2,$ that is, when $\pmb \mu = \mu_1,$ this result follows from \cite[Theorem 1]{Rinv} together with \eqref{cokernel vector valued}, see also \cite[Corollary 3.1]{OLLOF2iso}. Here we obtain that this result remains true even when $\pmb \mu$ is an arbitrary $(m-1)$-tuple of semi-spectral measures with $m\geqslant 2.$ 

\begin{prop}\label{polydense through dilation}
Let $m\geqslant 2$ and $\pmb\mu=(\mu_1,\ldots ,\mu_{m-1})$ be an $(m-1)$-tuple of $\mathcal B(\mathcal E)$-valued semi-spectral measures on $\mathbb T.$ Let $f\in \mathcal H_{\pmb\mu}(\mathcal E).$ Then $f_r,$ the $r$-dilation of $f,$ converges to $f$ in $\mathcal H_{\pmb\mu}(\mathcal E)$  as $r\to 1$, that is, $\|f_r-f\|_{\pmb \mu}\to 0$ as $r\to 1.$ Consequently, the set of all $\mathcal E$-valued polynomials is dense in $\mathcal H_{\pmb\mu}(\mathcal E).$
\end{prop}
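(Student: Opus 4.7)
The plan is to reduce the convergence $\|f_r - f\|_{\pmb\mu} \to 0$ to the convergence of each of the summands appearing in the definition of $\|\cdot\|_{\pmb\mu}$, and then to handle the polynomial approximation as a straightforward corollary.

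First I would unpack the norm: by definition,
\begin{equation*}
\|f_r - f\|_{\pmb\mu}^2 = \|f_r - f\|_{H^2(\mathcal E)}^2 + \sum_{j=1}^{m-1} D_{\mu_j, j}(f_r - f),
\end{equation*}
so it suffices to show that each term on the right tends to $0$ as $r \to 1$. The Hardy-space term is a classical computation: writing $f = \sum_{k=0}^\infty a_k z^k$ with $a_k \in \mathcal E$, we have $\|f_r - f\|_{H^2(\mathcal E)}^2 = \sum_{k=0}^\infty (1-r^k)^2 \|a_k\|_{\mathcal E}^2$, which tends to $0$ by dominated convergence (dominated by $\sum \|a_k\|_{\mathcal E}^2 < \infty$). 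For each $j \in \{1, \ldots, m-1\}$, since $f \in \mathcal H_{\pmb\mu}(\mathcal E) \subseteq \mathcal H_{\mu_j, j}(\mathcal E)$, Theorem \ref{Radial domination} applies directly to give $D_{\mu_j, j}(f_r - f) \to 0$ as $r \to 1$. Adding these contributions yields $\|f_r - f\|_{\pmb\mu} \to 0$.

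For the consequence about polynomial density, let $f \in \mathcal H_{\pmb\mu}(\mathcal E)$ and $\varepsilon > 0$. By the first part, I can choose $r \in (0,1)$ with $\|f_r - f\|_{\pmb\mu} < \varepsilon/2$. Since $f_r \in \mathcal O(\overline{\mathbb D}, \mathcal E)$, its Taylor series about the origin converges uniformly on $\overline{\mathbb D}$; denote its $N$-th partial sum by $s_N(f_r)$. Uniform convergence gives convergence in $H^2(\mathcal E)$, and the explicit series formula of Lemma \ref{Rydhe diff vector valued}(i) implies $D_{\mu_j, j}(s_N(f_r) - f_r) \to 0$ as $N \to \infty$ for each $j = 1, \ldots, m-1$, since $s_N(f_r) - f_r \in \mathcal O(\overline{\mathbb D}, \mathcal E)$ and the series tails vanish. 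Hence $\|s_N(f_r) - f_r\|_{\pmb\mu} \to 0$, and for $N$ large enough the polynomial $p := s_N(f_r)$ satisfies $\|p - f\|_{\pmb\mu} < \varepsilon$.

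There is no real obstacle here; the work has already been done in Theorem \ref{Radial domination}. The only mild subtlety is verifying the polynomial approximation of $f_r$ simultaneously in all the weighted Dirichlet seminorms $D_{\mu_j, j}$ and in the Hardy norm, but this is transparent once we invoke Lemma \ref{Rydhe diff vector valued}(i) (together with the absolute convergence of that series) to control the tails of the Taylor series of the holomorphic function $f_r$ on a neighborhood of $\overline{\mathbb D}$.
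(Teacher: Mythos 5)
Your proposal is correct and follows essentially the same route as the paper: apply Theorem \ref{Radial domination} to each seminorm $D_{\mu_j,j}$, handle the $H^2(\mathcal E)$ term separately (the paper declares this part straightforward, while you spell out the series computation), and then approximate $f_r\in\mathcal O(\overline{\mathbb D},\mathcal E)$ by its Taylor partial sums using Lemma \ref{Rydhe diff vector valued}(i). No substantive differences.
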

\begin{proof}
 Applying Theorem \ref{Radial domination}, we obtain that $D_{\mu_j,j}(f_r-f)\to 0$ as $r\to 1$ for each $j=1,\ldots,m-1.$ Since $f\in H^2(\mathcal E),$ it is straightforward to  verify that $\|f_r-f\|_{H^2(\mathcal E)}\to 0$ as $r\to 1.$ Hence it follows that $\|f_r-f\|_{\pmb \mu}\to 0$ as $r\to 1.$ So, for any given $\epsilon > 0,$ there exists a $R\in (0,1)$ such that $\|f_{\!_R}-f\|_{\pmb \mu} < \frac{\epsilon }{2}.$ Note that  $f_{\!_R}$ is in $\mathcal O(\overline{\mathbb D},\mathcal E).$ So the power series expansion of $f_{\!_R}$ about origin converges uniformly on a neighbourhood of the closed unit disc $\overline{\mathbb D}.$ Let  $s_n(f_{\!_R})$ be the $n$-th partial sum of the associated power series of $f_{\!_R}.$ Clearly $\|f_{\!_R} - s_n(f_{\!_R}) \|_{H^2(\mathcal E)} \to 0$ as $n\to\infty.$
By Lemma \ref{Rydhe diff vector valued}(i), we obtain that $D_{\mu_j,j}(f_{\!_R} - s_n(f_{\!_R}))\to 0$ as $n\to \infty$ for every $j=1,\ldots, m-1.$ 
This gives us that for any given $\epsilon > 0$ there exists $k\in\mathbb N$ such that $\|f_{\!_R}- s_n(f_{\!_R})\|_{\pmb \mu} < \frac{\epsilon}{2},$ for every $n\geqslant k.$ Consequently, we obtain that $\|f- s_k(f_{\!_R})\|_{\pmb \mu} < \epsilon.$
\end{proof}

\begin{rem}
Note that, if $\mathcal E= \mathbb C$ and $\pmb\mu =(\mu_1,\ldots,\mu_{m-1})$ is an $(m-1)$-tuple of finite positive Borel measures in $\mathcal M_+(\mathbb T)$ then by Theorem \ref{bounded m iso vector valued} and Corollary \ref{poly-dense vector valued}, it follows that the operator $M_z$ on $\mathcal H_{\pmb\mu}(\mathbb C)$ is a cyclic, analytic $m$-isometry. In this case, the space $\mathcal H_{\pmb\mu}(\mathbb C)$ coincides with $\mathcal D_{\overrightarrow{\mu}}^2$, where $\overrightarrow{\mu}= (\sigma,\mu_1,\ldots,\mu_{m-1})$ is an $m$-tuple of finite positive Borel measures in $\mathcal M_+(\mathbb T),$ as described in Rydhe's model for cyclic $m$-isometry, see \cite[p. 735]{Rydhe}.
\end{rem}

Using Lemma \ref{Contractivity of L vector valued}(i), we observe that $L$ is a bounded operator on $\mathcal H_{\pmb\mu}(\mathcal E).$ Thus the operator $L$ is a left inverse of $M_z$ with $\ker L= \ker M_z^*$ (see \eqref{cokernel vector valued}). Hence the operator $L$ on $\mathcal H_{\pmb\mu}(\mathcal E)$ coincides with the operator $L_{M_z}= (M_z^*M_z)^{-1}M_z^*.$ 
In what follows, we will use the notations $L$ and $L_{M_z}$ interchangeably, when the underlying Hilbert space is $\mathcal H_{\pmb\mu}(\mathcal E).$
The following theorem shows that the operator $M_z$ on  $\mathcal H_{\pmb\mu}(\mathcal E)$ satisfies \eqref{mu-positive}. In Section 6 we will show that this set of operator inequalities in \eqref{mu-positive} plays a key role in identifying  the operator $M_z$ on  $\mathcal H_{\pmb\mu}(\mathcal E)$ among the class of all analytic $m$-isometries.

\begin{theorem}\label{defect formula for model space vector valued}
Let $m\geqslant 2$ and $\pmb\mu=(\mu_1,\ldots ,\mu_{m-1})$ be an $(m-1)$-tuple of $\mathcal B(\mathcal E)$-valued semi-spectral measures on $\mathbb T.$ Then the operator $M_z$ on $\mathcal H_{\pmb \mu}(\mathcal E)$ satisfies
\begin{eqnarray}\label{Q-j and Q_j+1 vector valued}
\langle \beta_{r}(M_z) f, f\rangle = D_{\mu_r,0}(f) + \sum_{n=1}^{\infty}\big\langle \beta_{r+1}(M_z) L_{M_z}^n f, L_{M_z}^n f \big\rangle,  f\in\mathcal H_{\pmb \mu}(\mathcal E), r = 1,\ldots, m-1.
\end{eqnarray}
In particular, 
\begin{eqnarray*}
\beta_r(M_z)\geqslant \sum_{n=1}^{\infty} {L_{M_z}^*}^{\!\!\!n}\beta_{r+1}({M_z})L_{M_z}^n, \quad r = 1,\ldots, m-1.
\end{eqnarray*}
\end{theorem}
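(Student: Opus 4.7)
The plan is to compute $\langle \beta_r(M_z) f, f \rangle$ directly in terms of the weighted Dirichlet integrals entering the definition of $\|\cdot\|_{\pmb\mu}$ and then recognize the right-hand side as the same expression reorganised via the backward-difference identities already established. First I would write
\[
\langle \beta_r(M_z) f, f \rangle = \sum_{j=0}^{r} \binom{r}{j}(-1)^{r-j} \|z^{j} f\|_{\pmb\mu}^{2} = \Delta^{r}\|f\|_{\pmb\mu}^{2},
\]
the $r$-th forward difference operator applied to $\|f\|_{\pmb\mu}^{2}$. Since $M_z$ acts as an isometry on $H^{2}(\mathcal E)$, one has $\Delta^{r}\|f\|_{H^{2}(\mathcal E)}^{2} = 0$ for every $r \geqslant 1$, so splitting the norm and using \eqref{forward diff id vector valued} for each summand $D_{\mu_j,j}(f)$ yields the closed-form expression
\[
\langle \beta_r(M_z) f, f \rangle = \sum_{j=r}^{m-1} D_{\mu_j, j-r}(f), \qquad r = 1, \ldots, m-1,
\]
where the sum is empty (hence zero) when $r \geqslant m$, which is consistent with the $m$-isometric property.

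Next I would apply the same formula with $r$ replaced by $r+1$ and with $f$ replaced by $L_{M_z}^{n} f$. By Lemma \ref{Contractivity of L vector valued}, $L_{M_z}^{n} f$ remains in $\mathcal H_{\pmb\mu}(\mathcal E)$, so
\[
\langle \beta_{r+1}(M_z) L_{M_z}^{n} f, L_{M_z}^{n} f \rangle = \sum_{j=r+1}^{m-1} D_{\mu_j,\, j-r-1}(L_{M_z}^{n} f).
\]
Summing over $n \geqslant 1$ and swapping the two sums (everything is nonnegative, so this is justified by Tonelli), the inner sum $\sum_{n=1}^{\infty} D_{\mu_j,\, j-r-1}(L_{M_z}^{n} f)$ for each fixed $j \in \{r+1, \ldots, m-1\}$ is evaluated by Lemma \ref{backward difference identity vector valued} when $j-r-1 \geqslant 1$, and by Corollary \ref{Improvement tool}(ii) when $j - r - 1 = 0$; in both cases the total equals $D_{\mu_j,\, j-r}(f)$. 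Hence
\[
\sum_{n=1}^{\infty} \langle \beta_{r+1}(M_z) L_{M_z}^{n} f, L_{M_z}^{n} f \rangle = \sum_{j=r+1}^{m-1} D_{\mu_j,\, j-r}(f).
\]

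Combining these two computations gives
\[
D_{\mu_r,0}(f) + \sum_{n=1}^{\infty} \langle \beta_{r+1}(M_z) L_{M_z}^{n} f, L_{M_z}^{n} f \rangle = \sum_{j=r}^{m-1} D_{\mu_j,\, j-r}(f) = \langle \beta_r(M_z) f, f \rangle,
\]
which is exactly \eqref{Q-j and Q_j+1 vector valued}. The ``in particular'' assertion is then immediate, since $D_{\mu_r,0}(f) \geqslant 0$ because $P_{\!\mu_r}(R\zeta)$ is a positive operator for every $\zeta \in \mathbb T$ and $R \in (0,1)$. I do not expect a serious obstacle here; the one point requiring a little care is the boundary case $j = r+1$ of the inner summation, which is precisely why the $j=0$ extension in Corollary \ref{Improvement tool}(ii) is needed in addition to Lemma \ref{backward difference identity vector valued}.
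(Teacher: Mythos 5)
Your proposal is correct and follows essentially the same route as the paper: both reduce $\langle \beta_r(M_z)f,f\rangle$ to forward differences of the $D_{\mu_j,j}(f)$ via \eqref{forward diff id vector valued}, then collapse the sum over $n$ of the $\beta_{r+1}$ terms using Lemma \ref{backward difference identity vector valued} together with Corollary \ref{Improvement tool}(ii) for the boundary case. The only cosmetic difference is that you evaluate the differences in closed form ($\sum_{j=r}^{m-1} D_{\mu_j,\,j-r}(f)$) where the paper keeps them packaged in the identity \eqref{Important id}.
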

\begin{proof}

Since $\|zf\|_{H^2(\mathcal E)}=\|f\|_{H^2(\mathcal E)}$ for every $f\in\mathcal H_{\pmb\mu}(\mathcal E),$ it follows that for $r=1,\ldots,m-1,$
\begin{eqnarray*}
\langle \beta_r(M_z) f , f \rangle = \sum_{j=1}^{m-1} \Delta^r D_{\mu_j,j}(f),\quad f\in\mathcal H_{\pmb\mu}(\mathcal E).
\end{eqnarray*}
Note that if $\mu\in \mathcal M_+(\mathbb T,\mathcal B(\mathcal E))$ and $f\in \mathcal H_{\mu,j}(\mathcal E)$ for some $j\in\mathbb N,$ then in view of \eqref{forward diff id vector valued}, we get that 
$$
\triangle ^{s+1}D_{\mu,j}( L^nf) = \begin{cases}
D_{\mu,j-s-1}(L^nf), & ~\mbox {if}~~j-s-1\geqslant 0\\
0, & ~\mbox{otherwise}
\end{cases},\,\,n\in\mathbb N,\,\,s\in\mathbb Z_+.
$$ 
Consecutively using Lemma \ref{backward difference identity vector valued} and Corollary \ref{Improvement tool}, we get the  following identity:
\begin{eqnarray}\label{Important id}
\triangle ^{s}D_{\mu,j}(f) -\sum_{n=1}^{\infty}\triangle ^{s+1}D_{\mu,j}( L^nf)=
\begin{cases}
D_{\mu,0}(f), & s=j\\
0, & s\neq j
\end{cases},\,\,f\in\mathcal H_{\mu,j}(\mathcal E),\,j\in \mathbb N,\, s\in \mathbb Z_+. 
\end{eqnarray}
Using \eqref{Important id}, we obtain that for any $ f\in\mathcal H_{\pmb\mu}(\mathcal E),$
\begin{eqnarray*}
\langle \beta_r({M_z}) f , f \rangle - \sum_{n=1}^{\infty}\big\langle \beta_{r+1}({M_z}) L^n f, L^n f \big\rangle
=&\hspace*{-.2cm} \displaystyle\sum_{j=1}^{m-1} \Delta^r D_{\mu_j,j}(f) - \displaystyle\sum_{j=1}^{m-1}\sum_{n=1}^{\infty} \Delta^{r+1} D_{\mu_j,j}(L^nf)\\
=&\hspace*{-5.7cm} D_{\mu_r,0}(f).
\end{eqnarray*}
This establishes \eqref{Q-j and Q_j+1 vector valued} and completes the proof of the theorem.
\end{proof}


\section{The wandering subspace property for a class of $m$-concave operators}\label{sec5}
The main aim of this section is to establish Theorem \ref{Wold-type-mconcave}, that is, the wandering subspace property for left invertible analytic $m$-concave operators satisfying the operator inequalities \eqref{mu-positive}. This not only becomes a key tool to prove the main theorem of Section 6 but it also provides an alternative way to obtain the density of polynomials in the Hilbert space $\mathcal H_{\pmb\mu}(\mathcal E)$. Before proceeding to the proof, we note down the following lemma which will be crucial in the proof of Theorem \ref{Wold-type-mconcave}. 
This lemma can essentially be found in \cite[Corollary $2.4$ and Theorem $2.5$]{CG}.
\begin{lemma}\label{m-concaveop}
Let $T$ be an $m$-concave operator in $\mathcal B(\mathcal H)$ for some $m\in\mathbb N$. Then, 
\begin{eqnarray*}
{T^*}^nT^n\leqslant \sum_{j=0}^{m-1}\binom{n}{j}\beta_{j}(T),  \quad n\geqslant m.
\end{eqnarray*}
Moreover, $\beta_{m-1}(T)\geqslant 0.$
\end{lemma}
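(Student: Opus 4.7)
The plan is to set $a_n := T^{*n}T^n$ and work with this operator-valued sequence via the forward-difference operator $\Delta a_n = a_{n+1} - a_n$. A quick induction on $j$, based on the telescoping recurrence $\beta_{j+1}(T) = T^*\beta_j(T)T - \beta_j(T)$ (itself a Pascal-type consequence of the definition of $\beta_{j+1}$), shows that
\begin{equation*}
\Delta^j a_k = T^{*k}\beta_j(T)T^k, \qquad j, k \in \mathbb Z_+.
\end{equation*}
In particular $\Delta^j a_0 = \beta_j(T)$, and $m$-concavity ($\beta_m(T) \leqslant 0$) gives $\Delta^m a_k \leqslant 0$ for every $k \geqslant 0$, since sandwiching a negative operator between $T^{*k}$ and $T^k$ preserves the sign.

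The main ingredient is Newton's forward-difference formula with remainder,
\begin{equation*}
a_n = \sum_{j=0}^{m-1}\binom{n}{j}\Delta^j a_0 + \sum_{k=0}^{n-m}\binom{n-k-1}{m-1}\Delta^m a_k, \qquad n \geqslant m,
\end{equation*}
which is a purely combinatorial identity valid for any sequence in an abelian group and can be checked by induction on $n$ (or by iterating summation by parts $m$ times in $a_n = a_0 + \sum_{k=0}^{n-1}\Delta a_k$). Substituting the values of $\Delta^j a_0$ and observing that $\binom{n-k-1}{m-1} \geqslant 0$ while $\Delta^m a_k \leqslant 0$, the remainder is a negative operator and the first assertion of the lemma drops out.

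For the \emph{moreover} clause, I would apply this inequality to an arbitrary $x \in \mathcal H$ to obtain
\begin{equation*}
0 \leqslant \|T^n x\|^2 \leqslant \sum_{j=0}^{m-1}\binom{n}{j}\langle \beta_j(T)x,x\rangle, \qquad n \geqslant m,
\end{equation*}
then divide by $\binom{n}{m-1}$ and let $n \to \infty$. Since $\binom{n}{j}/\binom{n}{m-1} = O(n^{j-(m-1)}) \to 0$ for $j < m-1$, while the ratio equals $1$ at $j = m-1$, the limit forces $\langle \beta_{m-1}(T)x, x\rangle \geqslant 0$, so $\beta_{m-1}(T) \geqslant 0$.

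The only nontrivial bookkeeping step is the Newton-type identity, which I expect to be the main obstacle; however, it is a classical scalar identity that transfers verbatim to operator sequences by pairing with vectors, so once set up it requires no genuinely new idea.
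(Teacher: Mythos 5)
Your proof is correct. Note that the paper itself supplies no argument for this lemma: it simply attributes the statement to \cite[Corollary 2.4 and Theorem 2.5]{CG}, so there is no in-paper proof to compare against. Your route --- the identity $\Delta^j a_k = {T^*}^k\beta_j(T)T^k$ for $a_k={T^*}^kT^k$ (which follows from the recurrence $\beta_{j+1}(T)=T^*\beta_j(T)T-\beta_j(T)$), Newton's forward-difference expansion with the remainder $\sum_{k=0}^{n-m}\binom{n-k-1}{m-1}\Delta^m a_k$, and the observation that $m$-concavity makes every remainder term nonpositive --- is the standard finite-difference argument for such estimates and is essentially the mechanism behind Gu's results. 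All the individual steps check out: the difference identity is a direct induction using Pascal's rule, the Newton identity with that remainder form is verified by induction on $m$ via the hockey-stick identity, the sandwiching ${T^*}^k(\cdot)T^k$ preserves negativity, and the normalization by $\binom{n}{m-1}$ in the limit $n\to\infty$ correctly isolates $\langle\beta_{m-1}(T)x,x\rangle\geqslant 0$ since $\binom{n}{j}/\binom{n}{m-1}\to 0$ for $j<m-1$. Your write-up therefore gives a clean, self-contained proof of a statement the paper only cites.
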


An operator $T$ in $\mathcal B(\mathcal H)$  is left invertible if and only if $T^*T$ is invertible. 
Note that $L_T$ is a left inverse of $T$ satisfying $\ker L_T=\ker T^*.$ Furthermore, if $L$ is a left inverse of $T$ satisfying $\ker L=\ker T^*$ then it follows that $L=L_T.$ The following lemma for a class of left invertible operators will be essential in proving Theorem \ref{Wold-type-mconcave}.
\begin{lemma}\label{Dm}
Let $m\geqslant 2$ and let $T$ be a left invertible operator in $\mathcal B(\mathcal H)$ satisfying 
\begin{itemize}
\item[\rm (i)]$\beta_{m-1}(T)\geqslant 0,$
\item[\rm (ii)]$\beta_{r}(T)\geqslant \displaystyle\sum_{n=1}^\infty {L_T^*}^n\beta_{r+1}(T)L_T^n, \quad  r=1,\ldots,m-2.$ 
\end{itemize}
Then the following inequalities hold
\begin{equation*}
\sum_{n=r+1}^\infty {\binom{n-1}{r}}{L_T^*}^n\beta_{r+1}(T)L_T^n \leqslant I, \quad r=0,\ldots,m-2.
\end{equation*}
\end{lemma}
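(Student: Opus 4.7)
The key is to prove, after re-indexing by $s = r+1$, that
\[
\Sigma_s := \sum_{n=s}^\infty \binom{n-1}{s-1} L_T^{*n} \beta_s(T) L_T^n \leqslant I
\]
for every $s = 1, \ldots, m-1$. A preliminary observation is that iterating hypothesis (ii) downward starting from (i) shows $\beta_s(T) \geqslant 0$ for all $s = 1, \ldots, m-1$: indeed, $\beta_{m-1}(T) \geqslant 0$ by (i), and if $\beta_{s+1}(T) \geqslant 0$ then $\beta_s(T) \geqslant \sum_{n=1}^\infty L_T^{*n} \beta_{s+1}(T) L_T^n \geqslant 0$ by (ii). Consequently every summand in $\Sigma_s$ is a positive operator and its partial sums form a monotone increasing net; once bounded above by $I$, strong convergence to a positive operator $\leqslant I$ is automatic.

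The plan is to handle the base case $\Sigma_1 \leqslant I$ by a direct telescoping computation, and then establish the monotonicity $\Sigma_{s+1} \leqslant \Sigma_s$ by induction using (ii). For the base case, write $\beta_1(T) = T^*T - I$ and use $L_T T = I$ (so that $T^* L_T^{*n} = L_T^{*(n-1)}$ for $n \geqslant 1$), together with the fact that $P := T L_T = T(T^*T)^{-1}T^*$ is the orthogonal projection onto the range of $T$, hence $I - P = P_{\ker T^*}$. A direct computation then yields
\[
L_T^{*n} T^*T L_T^n = L_T^{*(n-1)}(TL_T) L_T^{n-1} = L_T^{*(n-1)} L_T^{n-1} - L_T^{*(n-1)} P_{\ker T^*} L_T^{n-1},
\]
and telescoping the partial sums gives
\[
\sum_{n=1}^N L_T^{*n} \beta_1(T) L_T^n = I - L_T^{*N} L_T^N - \sum_{n=0}^{N-1} L_T^{*n} P_{\ker T^*} L_T^n \leqslant I,
\]
from which $\Sigma_1 \leqslant I$ follows by letting $N \to \infty$.

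For the inductive step, assume $\Sigma_s \leqslant I$ for some $1 \leqslant s \leqslant m-2$. Apply (ii) at index $s$, compress by $\binom{k-1}{s-1} L_T^{*k}(\cdot)L_T^k$ and sum over $k \geqslant s$. After interchanging the two (positive) summations and reindexing with $j = n+k$, the hockey stick identity $\sum_{k=s}^{j-1}\binom{k-1}{s-1} = \binom{j-1}{s}$ gives $\Sigma_s \geqslant \Sigma_{s+1}$, hence $\Sigma_{s+1} \leqslant I$, completing the induction.

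The main technical point is justifying these manipulations of infinite series. This is handled by positivity: each $\Sigma_s$ is a strongly convergent series of positive operators (its partial sums being monotone and, inductively, bounded above by $\Sigma_{s-1} \leqslant I$), and the Fubini--Tonelli principle for positive operator-valued series legitimizes the rearrangement in the inductive step. The only conceptual ingredient beyond bookkeeping is the idempotent identity $TL_T = P$ that produces the clean telescoping in the base case.
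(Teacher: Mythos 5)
Your proof is correct and follows essentially the same route as the paper: the same hockey-stick/reindexing computation establishes the monotonicity chain $\Sigma_{s+1}\leqslant\Sigma_s$ (the paper's $\Psi(r)\leqslant\Psi(r-1)$), and the base case $\Sigma_1\leqslant I$ is the same telescoping identity, which the paper simply imports from Richter's work rather than deriving via $TL_T=I-P_{\ker T^*}$ as you do. The only differences are cosmetic (reversed induction direction, a self-contained base case, and your explicit remarks on convergence of the positive operator series, which the paper leaves implicit).
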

\begin{proof}
It is evident from the hypothesis that $\beta_r(T)\geqslant 0$ for $r=1,\ldots, m-1.$ Also, for $r=0,\ldots,m-2$, set 
$\Psi(r)=\sum_{n=r+1}^\infty {\binom{n-1}{r}}{L_T^*}^n\beta_{r+1}(T)L_T^n$.
We claim that $\Psi(r)\leqslant I$ for $r=0,\ldots,m-2.$ To this end,
note that, for any $r\in \{1,\ldots, m-2\},$ we have
\begin{align*}\Psi(r-1)=
\sum_{i=r}^\infty \binom{i-1}{r-1} {L_T^*}^i\beta_{r}(T)L_T^i & \geqslant \sum_{i=r}^\infty \binom{i-1}{r-1}\sum_{n=1}^\infty {L_T^*}^{n+i}\beta_{r+1}(T)L_T^{n+i}\\
&=\sum_{p=r+1}^\infty \Bigg(\sum_{i=r}^{p-1} \binom{i-1}{r-1}\Bigg){L_T^*}^p\beta_{r+1}(T)L_T^p\\&
=\sum_{p=r+1}^\infty \binom{p-1}{r}{L_T^*}^p\beta_{r+1}(T)L_T^p=\Psi(r),
\end{align*}
where the second last equality follows from a combinatorial identity, known as the Hockey-stick identity (see \cite[p. 46]{DW}). Thus the above inequality shows that 
\begin{equation*}
\Psi(m-2)\leqslant \Psi(m-3)\leqslant\cdots\leqslant\Psi(0).
\end{equation*}
Hence, in order to prove $\Psi(r)\leqslant I$ for $r=0,\ldots,m-2$, it suffices to show that 
\begin{eqnarray*}
\Psi(0)=
\sum_{i=1}^\infty  {L_T^*}^i\beta_{1}(T)L_T^i\leqslant I.
\end{eqnarray*}
To this end, it follows from \cite[p. 209]{Rinv} that,
\begin{eqnarray*}
\|x\|^2= \sum_{i=0}^{n-1}\|PL_T^i x\|^2 + \|L_T^n x\|^2 + \sum_{i=1}^{n}\|DL_T^i x\|^2,
\end{eqnarray*}
where $D$ is the positive square root of $\beta_1(T)$ and $P=I-TL_T$. 
Thus we have, 
\[\Big\langle \sum_{i=1}^{\infty}{L_T^*}^i\beta_1(T) L_T^i x, x \Big\rangle = \sum_{i=1}^{\infty}\|DL_T^i x\|^2 \leqslant \|x\|^2.\]
Consequently, $\displaystyle\sum_{i=1}^\infty  {L_T^*}^i\beta_{1}(T)L_T^i\leqslant I$. 
This completes the proof.
\end{proof}
We are now ready to present the proof of the main  result (Theorem \ref{Wold-type-mconcave}) of this section. 
The techniques involved in the proof are motivated from those in \cite[Theorem 1]{R}.
\begin{proof}[\textbf{Proof of Theorem \ref{Wold-type-mconcave}}]
Since $T$ is $m$-concave, by Lemma \ref{m-concaveop}, $\beta_{m-1}(T)\geqslant 0$. This together with \eqref{mu-positive} implies that $\beta_1(T)\geqslant 0,$ i.e. $T$ is expansive. We claim that 
$$\bigvee\big\{T^n (\ker T^*):n\in\mathbb Z_+\big\}=\mathcal H.$$ 
Using Lemmas \ref{m-concaveop} and \ref{Dm}, note that for $x\in\mathcal H$ and for $k,l\in\mathbb N$ with $l\geqslant k \geqslant m,$ we have
\begin{eqnarray*}
\inf_{k\leqslant n \leqslant l}\big(\|T^nL_T^n x\|^2- \|L_T^nx\|^2\big) \sum_{n=k}^{l}\frac{1}{n}&\leqslant &\sum_{n=k}^{l} \frac{\|T^nL_T^n x\|^2- \|L_T^nx\|^2}{n}\\
&\leqslant & \sum_{n=k}^{l}\sum_{j=1}^{m-1} \frac{1}{j}\binom{n-1}{j-1}\langle {L_T^*}^n\beta_j(T) L_T^n x, x \rangle\\
& = & \sum_{j=1}^{m-1} \frac{1}{j}\sum_{n=k}^{l}{\binom{n-1}{j-1}}\langle {L_T^*}^n\beta_j(T) L_T^n x, x \rangle\\
& \leqslant & \Big(\sum_{j=1}^{m-1} \frac{1}{j}\Big) \|x\|^2.
\end{eqnarray*} 
Since $\{\|L_T^nx\|\}$ is a decreasing sequence of non-negative numbers and $\sum_{n=1}^{\infty}\frac{1}{n}$ is a divergent series, we have $\liminf \|T^n L_T^n x\|=\lim \|L_T^n x\|$. 
Thus the sequence $\{T^n L_T^n x\}$ is bounded in $\mathcal H$ and therefore there exists a subsequence $\{T^{n_k} L_T^{n_k} x\}$, converging to $y$ (say) in weak topology of $\mathcal H$. 
As $T^n$ is expansive, the range ${\rm ran}(T^n)$ is a closed subspace of $\mathcal H$ and $y\in {\rm ran}(T^n)$ for each $n\in\mathbb N.$ 
Since $T$ is analytic, $y=0.$ Thus $(I-T^{n_k}L_T^{n_k})x\to x$ weakly. Note that 
\begin{align*}
(I-T^jL_T^j)= \sum\limits_{p=0}^{j-1}T^p(I-TL_T)L_T^p,\,\,\,j\geqslant 1,
\end{align*}
and $(I-TL_T)$ is the orthogonal projection onto $\ker T^*.$ Thus it follows that $(I-T^{n_k}L_T^{n_k})x \in \bigvee\big\{T^n (\ker T^*):n\in\mathbb Z_+\big\}$. Hence $x\in \bigvee\big\{T^n (\ker T^*):n\in\mathbb Z_+\big\}.$
Thus we conclude that $T$ has the wandering subspace property.
\end{proof}

\begin{cor}\label{poly-dense vector valued}
Let $m\geqslant 2$ and $\pmb\mu=(\mu_1,\ldots ,\mu_{m-1})$ be an $(m-1)$-tuple of $\mathcal B(\mathcal E)$-valued semi-spectral measures on $\mathbb T.$ The multiplication operator $M_z$ on $\mathcal H_{\pmb\mu}(\mathcal E)$ has the wandering subspace property. In particular, the set of $\mathcal E$-valued polynomials is dense in $\mathcal H_{\pmb\mu}(\mathcal E).$
\end{cor}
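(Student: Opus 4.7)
The plan is to verify that the operator $M_z$ on $\mathcal{H}_{\pmb\mu}(\mathcal{E})$ satisfies every hypothesis of Theorem \ref{Wold-type-mconcave}, and then use the identification $\ker M_z^* = \mathcal{E}$ together with the density of linear combinations of $\{M_z^n x : x \in \mathcal{E}, n \geq 0\}$ to deduce the density of $\mathcal{E}$-valued polynomials.

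First I would assemble the four hypotheses of Theorem \ref{Wold-type-mconcave} for $T = M_z$. By Theorem \ref{bounded m iso vector valued}, $M_z$ is a bounded analytic $m$-isometry; in particular $\beta_m(M_z) = 0 \leqslant 0$, so $M_z$ is $m$-concave. To see that $M_z$ is left invertible, observe that for every $f \in \mathcal{H}_{\pmb\mu}(\mathcal{E})$ one has $\|M_z f\|_{H^2(\mathcal{E})} = \|f\|_{H^2(\mathcal{E})}$ and, by Proposition \ref{general diff. formula vector valued}, $D_{\mu_j,j}(zf) - D_{\mu_j,j}(f) = D_{\mu_j,j-1}(f) \geqslant 0$ for $j=1,\ldots,m-1$. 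Summing gives $\|M_z f\|_{\pmb\mu} \geqslant \|f\|_{\pmb\mu}$, so $M_z$ is expansive and hence $M_z^*M_z$ is invertible. Finally, Theorem \ref{defect formula for model space vector valued} supplies the operator inequalities \eqref{mu-positive} for $M_z$ (indeed for the whole range $r = 1, \ldots, m-1$, which is stronger than required).

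With all the hypotheses of Theorem \ref{Wold-type-mconcave} verified, I may conclude that $M_z$ on $\mathcal{H}_{\pmb\mu}(\mathcal{E})$ has the wandering subspace property, that is,
\[
\bigvee\big\{M_z^n x : x \in \ker M_z^*, \ n \geqslant 0\big\} = \mathcal{H}_{\pmb\mu}(\mathcal{E}).
\]
By \eqref{cokernel vector valued}, $\ker M_z^* = \mathcal{E}$, so the closed linear span above is precisely the closure of the set $\{z^n x : x \in \mathcal{E}, n \geqslant 0\}$, which is the set of all $\mathcal{E}$-valued polynomials. Therefore the $\mathcal{E}$-valued polynomials are dense in $\mathcal{H}_{\pmb\mu}(\mathcal{E})$.

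There is no real obstacle, since all of the technical content has already been packaged into Theorems \ref{bounded m iso vector valued}, \ref{defect formula for model space vector valued}, and \ref{Wold-type-mconcave}; the only verification requiring a brief computation is the left invertibility of $M_z$, which falls out immediately from Proposition \ref{general diff. formula vector valued}. It is worth noting that this furnishes an alternative proof of Proposition \ref{polydense through dilation} that does not rely on the radial approximation theorem (Theorem \ref{Radial domination}), and instead goes through the $m$-concave Wold-type decomposition machinery of Section \ref{sec5}.
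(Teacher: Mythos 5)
Your argument is correct and is essentially the paper's own proof: the paper likewise deduces the corollary by combining Theorem \ref{bounded m iso vector valued} (boundedness, analyticity, $m$-isometry, hence $m$-concavity and left invertibility) with Theorem \ref{Wold-type-mconcave} and the identification $\ker M_z^*=\mathcal E$ from \eqref{cokernel vector valued}. You merely make explicit the intermediate verifications (expansivity via Proposition \ref{general diff. formula vector valued} and the inequalities \eqref{mu-positive} via Theorem \ref{defect formula for model space vector valued}) that the paper leaves implicit, and your closing remark about this giving a dilation-free route to polynomial density is exactly the observation the paper makes at the start of Section \ref{sec5}.
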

\begin{proof}
By Theorem \ref{bounded m iso vector valued}, $M_z$ on $\mathcal H_{\pmb\mu}(\mathcal E)$ is a bounded, analytic $m$-isometry. Using Theorem \ref{defect formula for model space vector valued} and Theorem \ref{Wold-type-mconcave}, we get that $M_z$ has wandering subspace property. Since $\ker M_z^*=\mathcal E$ (see \eqref{cokernel vector valued}), the corollary is proved.
\end{proof}

\begin{rem}
We note here that the operator inequalities in \eqref{mu-positive} are not necessary for an expansive analytic $m$-isometry to have the wandering subspace property. For a counter-example (this example is due to Shailesh Trivedi) consider $m=3.$ 
Let $0<\varepsilon<1$ and $p(z)=1+z+\frac{\varepsilon}{2} z^2$ for $z\in\mathbb C.$ Define $\lambda_0:=\sqrt{1-\varepsilon},$ $\lambda_1:=\sqrt{2\varepsilon}$ and $\lambda_n:=\sqrt{\frac{p(n-1)}{p(n-2)}}$ for $n\geqslant 2$. 
Let $\{e_n: n\in\mathbb Z_+\}$ be an orthonormal basis for $\ell^2(\mathbb Z_+)$.
Define a linear operator 
$$S_\lambda : \ell^2(\mathbb Z_+)\to \ell^2(\mathbb Z_+)$$ 
by the rule
\begin{center}
$S_\lambda(e_n)=
\begin{cases}
\lambda_0 e_0 + \lambda_1 e_1 & \mbox{ if }n=0,\\
\lambda_{n+1}e_{n+1} & \mbox{ if } n\geqslant 1.
\end{cases}
$
\end{center}
From \cite[Example 3.1]{AST} (with $a=1$ and $b=\varepsilon/2$), we note that $S_\lambda$ is an analytic expansive $3$-isometry having wandering subspace property. 
We claim that if $\varepsilon$ is in a small neighbourhood of $0$ then $S_\lambda$ does not satisfy \eqref{mu-positive}.
From  \cite[Example 3.1]{AST}, we also note that $L^*_{S_\lambda}=S_\mu,$ where $S_\mu:\ell^2(\mathbb Z_+)\to \ell^2(\mathbb Z_+)$ is a linear operator such that 
\begin{center}
$S_\mu(e_n)=
\begin{cases}
\mu_0 e_0 + \mu_1 e_1 & \mbox{ if }n=0,\\
\mu_{n+1}e_{n+1} & \mbox{ if } n\geqslant 1.
\end{cases}
$
\end{center}
with $\mu_0=\frac{\lambda_0}{\lambda_0^2+\lambda_1^2},$ $\mu_1=\frac{\lambda_1}{\lambda_0^2+\lambda_1^2}$ and $\mu_n=\frac{1}{\lambda_n}$ for $n\geqslant 2.$ It follows that 
\begin{eqnarray}\label{Cauchy Dual}
L_{S_\lambda}^ke_0=\mu_0^k e_0, \quad k\in\mathbb N.
\end{eqnarray}
We have
\begin{eqnarray}\label{LHS}
\langle \beta_1(S_\lambda)e_0, e_0\rangle = \|S_\lambda e_0\|^2 - \|e_0\|^2 = \lambda_0^2+\lambda_1^2-1=\varepsilon
\end{eqnarray}
and from \eqref{Cauchy Dual}, one gets
\begin{eqnarray}\label{RHS}
\sum_{k=1}^{\infty} \Big\langle {{L_{S_\lambda}^*}^{\!\!\!\!k}}\beta_2(S_\lambda) L_{S_\lambda}^k e_0, e_0 \Big\rangle &=& \sum_{k=1}^{\infty} \mu_0^{2k} \langle \beta_2(S_\lambda)e_0, e_0\rangle\notag \\
&=& \frac{\mu_0^2}{1-\mu_0^2}(1-2(\lambda_0^2+\lambda_1^2)+\lambda_0^4+\lambda_1^2(\lambda_0^2+\lambda_2^2)) \notag\\
&=& \frac{2(1-\varepsilon)}{3+\varepsilon}
\end{eqnarray}
\end{rem}
If $\varepsilon$ is in a small neighbourhood of $0$  then we note that the quantity in \eqref{LHS} is lesser than that in \eqref{RHS} and hence the claim stands verified. 

We find that any left invertible $m$-concave operator in $\mathcal B(\mathcal H)$ satisfying \eqref{mu-positive} admits a \emph{Wold-type decomposition}, see Theorem \ref{mconcave new}. Shimorin  introduced the notion of \emph{Wold-type decomposition} in order to study operators close to isometries, see \cite{Shimorin}. An operator $T$ in $\mathcal B(\mathcal H)$ is said to admit a {\it Wold-type decomposition} if the following statements hold: 
\begin{itemize}
\item[(i)] $\mathcal H_{\infty}(T),$ the hyper range of $T,$ is a reducing subspace for $T$ and $T|_{\mathcal H_{\infty}(T)}$ is a unitary operator.
\item[(ii)] The operator $T|_{{\mathcal H_{\infty}(T)}^{\perp}}$  has the wandering subspace property.
\end{itemize}
It follows from \cite[Theorem 1]{Rinv} that every $2$-concave operator admits a Wold-type decomposition, see \cite[Theorem 3.6]{Shimorin}. In the same paper \cite{Shimorin}, Shimorin asked the following question:
\begin{Question}{\cite[p. 185]{Shimorin}}
If an operator $T$ in $\mathcal B(\mathcal H)$ is expansive and $m$-concave for some $m\geqslant 3$, then does $T$ admit a Wold-type decomposition?
\end{Question}
 The answer is not yet known even for the class of expansive $m$-isometries. Recently, it has been shown that there are plenty of non-expansive cyclic analytic $3$-isometries which fail to have the wandering subspace property (see \cite{AST}). In view of the above question, 
in case of $m$-concave operators, the best known result till now to our knowledge is the following theorem due to Shimorin (see \cite[Theorem 3.8]{Shimorin}).
\begin{theorem}[Shimorin]\label{Shimorin 3 concave}
Let $T\in \mathcal B(\mathcal H) $ be expansive and satisfy the operator inequality
\begin{eqnarray*}
{T^*}^2 T^2 - 3 T^*T + 3I - {T^{\prime}}^*T^{\prime} - P_{\ker T^*} \leqslant 0,
\end{eqnarray*}
where $P_{\ker T^*} $ is the orthogonal projection of $\mathcal H$ onto $\ker T^*$ and $T^{\prime}= T(T^*T)^{-1}$ is the Cauchy dual of $T.$ Then $T$ is a $3$-concave operator and admits a Wold-type decomposition.
\end{theorem}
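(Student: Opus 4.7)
The plan is to reduce Shimorin's theorem to Theorem \ref{Wold-type-mconcave} of the present article applied with $m=3$, which supplies the wandering subspace property for the class of analytic, left invertible, $3$-concave operators satisfying the operator inequality \eqref{mu-positive}. Three tasks must be dispatched: (i) deduce $3$-concavity of $T$ from Shimorin's hypothesis; (ii) decompose $\mathcal H = \mathcal H_\infty(T) \oplus \mathcal H_\infty(T)^\perp$ so that $T$ acts as a unitary on the hyper-range and as an analytic operator on its orthogonal complement; (iii) verify that Shimorin's hypothesis forces the operator inequality \eqref{mu-positive} for the analytic summand.

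For (i), I would first rewrite Shimorin's hypothesis in the equivalent form
\[
\beta_2(T) \leqslant \beta_1(T)^{2}\,(T^*T)^{-1} + P_{\ker T^*},
\]
using the identity $(T^*T-I)^2(T^*T)^{-1} = T^*T + (T^*T)^{-1} - 2I$. Then, via $\beta_3(T) = T^*\beta_2(T)T - \beta_2(T)$ and the vanishing $T^*P_{\ker T^*}T = 0$ (since $\operatorname{ran}(T)\perp\ker T^*$), one aims to collapse the resulting inequality to $\beta_3(T)\leqslant 0$. The delicate point is bounding the Cauchy-dual contribution $T^*(T^*T)^{-1}T$, which under Shimorin's hypothesis should be controlled by $I$ on $\operatorname{ran}(T)$. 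For (ii), I would follow the blueprint of Shimorin's Proposition~3.4 for $m$-isometries (mentioned in the introduction), adapted to the $m$-concave setting: the polynomial growth bound $T^{*n}T^n \leqslant \sum_{j=0}^{2}\binom{n}{j}\beta_j(T)$ from Lemma \ref{m-concaveop}, combined with expansiveness, forces $T|_{\mathcal H_\infty(T)}$ to be norm-preserving and surjective, hence unitary, with $\mathcal H_\infty(T)$ reducing $T$. The restriction $A = T|_{\mathcal H_\infty(T)^\perp}$ is then left invertible, analytic, and $3$-concave, and Shimorin's inequality passes to $A$ since each side restricts compatibly.

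The principal obstacle is (iii). My plan is to conjugate the inequality $\beta_2(A) \leqslant \beta_1(A)^2 (A^*A)^{-1} + P_{\ker A^*}$ by ${L_A}^k$ and sum over $k \geqslant 1$, exploiting the telescoping identity $L_A^*A^*A L_A = I - P_{\ker A^*}$ together with
\[
\sum_{k=1}^{\infty} {L_A^*}^k\beta_1(A) {L_A}^k = I - \sum_{j=0}^{\infty} {L_A^*}^j P_{\ker A^*}{L_A}^j,
\]
which is implicit in the norm decomposition $\|x\|^2 = \sum_{i=0}^{n-1} \|P_{\ker A^*}{L_A}^i x\|^2 + \|{L_A}^n x\|^2 + \sum_{i=1}^{n} \|D {L_A}^i x\|^2$ used in the proof of Lemma \ref{Dm} (with $D = \beta_1(A)^{1/2}$), passed to the limit $n\to\infty$ using $\|{L_A}^n x\|\to 0$ by analyticity. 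Controlling the Cauchy-dual contribution $\sum_{k=1}^{\infty} {L_A^*}^k(A^*A)^{-1}{L_A}^k$ via $(A^*A)^{-1}\leqslant I$ (from expansiveness) and the contractivity of $L_A$ on $\mathcal H_\infty(T)^\perp$ should close the estimate. Once (iii) is established, Theorem \ref{Wold-type-mconcave} immediately produces the wandering subspace property on $\mathcal H_\infty(T)^\perp$, and combining this with the unitary action on $\mathcal H_\infty(T)$ completes the Wold-type decomposition.
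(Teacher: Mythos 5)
Your overall strategy --- reduce to Theorem \ref{mconcave new} (equivalently, Theorem \ref{Wold-type-mconcave} on $\mathcal H_\infty(T)^\perp$) by extracting $3$-concavity and the inequality \eqref{mu-positive} from Shimorin's hypothesis --- is exactly the paper's. But both of the steps you yourself flag as ``delicate'' are genuine gaps, and they have a common cause: you grouped the hypothesis as $\beta_2(T)\leqslant \beta_1(T)^2(T^*T)^{-1}+P_{\ker T^*}$, which has no useful structure under conjugation by $T$ or by $L_T$. The paper instead uses the identity $I-P_{\ker T^*}=TL_T=L_T^*(T^*T)L_T$ to recognize the hypothesis as
\begin{align*}
\beta_2(T)\ \leqslant\ \beta_1(T)-L_T^*\beta_1(T)L_T .
\end{align*}
(Here the Cauchy-dual term is read as $L_T^*L_T$, i.e.\ the quadratic form $\|L_Tx\|^2=\|{T'}^*x\|^2$; your literal reading ${T'}^*T'=(T^*T)^{-1}$ is a different operator and does not produce this form.) With this reformulation everything telescopes: conjugating by $T$ and using $L_TT=I$ gives $T^*\beta_2(T)T\leqslant T^*\beta_1(T)T-\beta_1(T)=\beta_2(T)$, i.e.\ $\beta_3(T)\leqslant 0$; conjugating by $L_T^k$ and summing gives $\sum_{k=0}^{n}{L_T^*}^k\beta_2(T)L_T^k\leqslant\beta_1(T)-{L_T^*}^{n+1}\beta_1(T)L_T^{n+1}\leqslant\beta_1(T)$, and since $\beta_2(T)\geqslant0$ by $3$-concavity (Lemma \ref{m-concaveop}) this yields \eqref{mu-positive} for $m=3$. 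Theorem \ref{mconcave new} then finishes the argument; your step (ii) essentially re-proves that theorem inline.

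Concretely, your proposed repair of step (i) --- controlling $T^*(T^*T)^{-1}T$ by $I$ --- fails: for a left-invertible weighted shift with weights $w_n$ one has $T^*(T^*T)^{-1}Te_n=(w_n/w_{n+1})^2e_n$, so that bound holds only when the weights are nondecreasing, and it already fails for the Dirichlet shift ($w_n=\sqrt{(n+2)/(n+1)}$), which satisfies every hypothesis of the theorem. Your step (iii) is likewise not closed: the sum $\sum_{k\geqslant 1} {L_T^*}^k\bigl(\beta_1(T)^2(T^*T)^{-1}+P_{\ker T^*}\bigr)L_T^k$ does not telescope, and the auxiliary claim that $\|L_A^nx\|\to0$ ``by analyticity'' is unjustified --- analyticity gives $\bigcap_nA^n\mathcal H=\{0\}$, while the decay of $\|L_A^nx\|$ to zero is, via the very norm identity you quote, essentially equivalent to the wandering subspace property you are trying to establish. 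Both gaps disappear once the hypothesis is rewritten in the telescoping form displayed above.
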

In the following theorem we provide an improvement of Theorem \ref{Shimorin 3 concave}. Also note that the inequalities stated in this theorem are nothing but the inequalities in \eqref{mu-positive}.
\begin{theorem}\label{mconcave new}
Let $T$ be a left invertible $m$-concave operator in $\mathcal B(\mathcal H)$ for some $m \geqslant 2.$ If $T$ satisfies the following inequalities:
\begin{eqnarray*}
\beta_r(T)\geqslant \sum_{k=1}^{\infty} {L_T^*}^k\beta_{r+1}(T) L_T^k, \quad r=1,\ldots, m-2 
\end{eqnarray*}
then $T$ admits a Wold-type decomposition.
\end{theorem}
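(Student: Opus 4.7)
The strategy is to cut $\mathcal H$ along the hyper-range $\mathcal H_\infty:=\mathcal H_\infty(T)$, prove this splitting reduces $T$ with $T|_{\mathcal H_\infty}$ unitary, and then invoke Theorem~\ref{Wold-type-mconcave} on the complementary piece. Left-invertibility of $T$ makes $T\mathcal H$ closed and, iterating the observation that $L_T x\in T^{n-1}\mathcal H$ whenever $x\in T^n\mathcal H$, gives $L_T(\mathcal H_\infty)\subseteq\mathcal H_\infty$ and hence $T(\mathcal H_\infty)=\mathcal H_\infty$. In particular $TL_Tx=x$, so $T^nL_T^n x=x$, for every $x\in\mathcal H_\infty$ and every $n\ge 0$.

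The decisive step will be to upgrade these identities to the claim that $T|_{\mathcal H_\infty}$ is an isometry. I would recycle the orthogonal identity
\[
\|x\|^2=\sum_{i=0}^{n-1}\|PL_T^ix\|^2+\|L_T^nx\|^2+\sum_{i=1}^n\|DL_T^ix\|^2
\]
that appears in the proof of Theorem~\ref{Wold-type-mconcave} (with $P=I-TL_T$ and $D=\beta_1(T)^{1/2}$, noting that $\beta_1(T)\ge 0$ follows from Lemma~\ref{m-concaveop} combined with \eqref{mu-positive}); it immediately shows that $n\mapsto\|L_T^nx\|$ is non-increasing. Combining the polynomial growth bound of Lemma~\ref{m-concaveop} with the summability estimate of Lemma~\ref{Dm}, I would then rerun the averaging argument from the proof of Theorem~\ref{Wold-type-mconcave} to conclude that $\liminf_n\bigl(\|T^nL_T^nx\|^2-\|L_T^nx\|^2\bigr)=0$. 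For $x\in\mathcal H_\infty$ the first term is identically $\|x\|^2$, so monotonicity forces $\|L_T^nx\|\equiv\|x\|$, and the displayed identity collapses to $PL_T^ix=0=DL_T^ix$ for all $i\ge 1$. Substituting $Tx\in\mathcal H_\infty$ for $x$ yields $Dx=0$, i.e.\ $\|Tx\|=\|x\|$, for every $x\in\mathcal H_\infty$.

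Isometry in hand, everything is routine. From $T^*Tx=x$ on $\mathcal H_\infty$ one deduces $T^*y=T^*TL_Ty=L_Ty\in\mathcal H_\infty$ for $y\in\mathcal H_\infty$, so $\mathcal H_\infty$ is $T^*$-invariant, hence reducing, and $T|_{\mathcal H_\infty}$ is a surjective isometry, i.e.\ unitary. On the reducing complement, $T|_{\mathcal H_\infty^\perp}$ is analytic (its hyper-range lies in $\mathcal H_\infty\cap\mathcal H_\infty^\perp=\{0\}$), left invertible and $m$-concave, and because $L_T$ and every $\beta_r(T)$ restrict to a reducing subspace, the inequalities \eqref{mu-positive} survive the restriction. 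Theorem~\ref{Wold-type-mconcave} then supplies the wandering subspace property on $\mathcal H_\infty^\perp$, completing the Wold-type decomposition. I expect the isometry step to be the only genuine obstacle, since it is the unique place that uses both the $m$-concavity and the inequalities \eqref{mu-positive} substantively; the rest is the standard reducing-subspace calculus together with Theorem~\ref{Wold-type-mconcave}.
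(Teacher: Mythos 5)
Your proof is correct, and its skeleton is the same as the paper's: split $\mathcal H$ along the hyper-range $\mathcal H_\infty(T)$, show this subspace reduces $T$ with unitary restriction, and apply Theorem~\ref{Wold-type-mconcave} to $T|_{\mathcal H_\infty(T)^\perp}$ after checking that analyticity, $m$-concavity, left invertibility and the inequalities \eqref{mu-positive} all pass to the reducing complement. The one genuine difference is how the ``reducing and unitary'' step is obtained. The paper observes that Lemma~\ref{m-concaveop} together with \eqref{mu-positive} forces $\beta_1(T)\geqslant 0$ and then simply cites Shimorin's Proposition 3.4, which asserts that for any expansive operator the hyper-range is reducing and the restriction to it is unitary; this makes the proof two lines long and makes clear that this sub-step needs nothing beyond expansivity. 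You instead reprove that sub-step from scratch by rerunning the averaging argument from the proof of Theorem~\ref{Wold-type-mconcave} (via Lemmas~\ref{m-concaveop} and~\ref{Dm}) to get $\liminf_n\bigl(\|T^nL_T^nx\|^2-\|L_T^nx\|^2\bigr)=0$, noting that $T^nL_T^nx=x$ on the hyper-range, and letting the telescoping norm identity collapse; this is a valid and self-contained alternative, at the cost of invoking the full hypotheses \eqref{mu-positive} for a fact that holds for all expansive operators. Both routes are sound; the paper's buys brevity and generality of the intermediate fact, yours buys independence from the external reference.
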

\begin{proof}
Since $T$ is $m$-concave, by Lemma \ref{m-concaveop}, $\beta_{m-1}(T)\geqslant 0$. This together with \eqref{mu-positive} implies that $\beta_1(T)\geqslant 0,$ i.e. $T$ is expansive. Hence,
by \cite[Proposition 3.4]{Shimorin}, $\mathcal H_\infty(T)$ is a reducing subspace for $T$ and $T|_{\mathcal H_\infty(T)}$ is a unitary operator. Define $S:=T|_{\mathcal H_\infty(T)^\perp}$ and note that $L_S=L_T|_{\mathcal H_\infty(T)^\perp}.$ 
It is straightforward to see that $S$ is analytic, $m$-concave and satisfies \eqref{mu-positive}. The wandering subspace property of $S$ now follows from Theorem \ref{Wold-type-mconcave}. This completes the proof.
\end{proof}
We also find that the result of Shimorin \cite[Theorem 3.8]{Shimorin} follows as a special case of Theorem \ref{mconcave new}. We provide the details below.
\begin{proof}[\textbf{A proof of Theorem \ref{Shimorin 3 concave}}]
Let $T$ be an expansive operator in $\mathcal B(\mathcal H)$ satisfying the inequality 
\begin{eqnarray}\label{Shimorin}
{T^*}^2T^2-3T^*T+3I-L_T^*L_T- P \leqslant 0,
\end{eqnarray}
where $P$ is the orthogonal projection onto $\ker T^*.$
Since $TL_T$ is an orthogonal projection onto the range of $T,$  we have $I-P=TL_T= L_T^*T^*TL_T. $ Note that \eqref{Shimorin} 
is equivalent to 
\begin{eqnarray}\label{Shimorin-transformed}
\beta_2(T) \leqslant \beta_1(T) - L_T^*\beta_1(T) L_T.
\end{eqnarray}
From \eqref{Shimorin-transformed}, we see that $T^*\beta_2(T)T\leqslant T^*\beta_1(T)T - \beta_1(T)=\beta_2(T),$ which in turn implies that $\beta_3(T)\leqslant 0.$ Hence $T$ is $3$-concave. Again, from \eqref{Shimorin-transformed},  we get 
\begin{eqnarray}\label{telescoping}
\sum_{k=0}^{n}{L_T^*}^k\beta_2(T) L_T^k \leqslant  \beta_1(T) - {L_T^*}^{n+1}\beta_1(T) L_T^{n+1}\leqslant  \beta_1(T), \quad n\in\mathbb N.
\end{eqnarray}
Since $T$ is $3$-concave, $\beta_2(T)\geqslant 0.$ Hence, \eqref{telescoping} in particular implies that
\[\sum_{k=1}^{\infty}{L_T^*}^k\beta_2(T) L_T^k \leqslant  \beta_1(T).\]
Now applying Theorem \ref{mconcave new} for $m=3$, we see that $T$ admits a Wold-type decomposition.
\end{proof}


\begin{rem}\label{remark strct containment vector valued} Note that by Theorem \ref{bounded m iso vector valued} and Theorem \ref{defect formula for model space vector valued},  for any $\pmb\mu=(\mu_1,\ldots ,\mu_{m-1}),$ an $(m-1)$-tuple of finite positive Borel measures on $\mathbb T,$ the multiplication operator $M_z$ on $\mathcal H_{\pmb \mu}(\mathbb C)$ is an $m$-isometry and satisfies \eqref{mu-positive}.  Now let $m=3$ and suppose that $M_z$ on $\mathcal H_{\pmb \mu}(\mathbb C)$ satisfies \eqref{Shimorin}. Then using the equivalence of \eqref{Shimorin} and \eqref{Shimorin-transformed} we have
\begin{equation*}
\big\langle\beta_2(M_z)1,1\big\rangle\leqslant\big\langle\big(\beta_1(M_z)-L_{M_z}^*\beta_1(M_z)L_{M_z}\big)1,1\big\rangle=\big\langle\beta_1(M_z)1,1\big\rangle.
\end{equation*}
Thus, by \eqref{Q-j and Q_j+1 vector valued}, we obtain $D_{\mu_2,0}(1)\leqslant D_{\mu_1,0}(1)$, that is, $\mu_2(\mathbb T)\leqslant \mu_1(\mathbb T).$
Hence we infer that the class of left invertible $3$-concave operators which satisfy \eqref{mu-positive} is strictly larger than that of expansive operators which satisfy \eqref{Shimorin}.
\end{rem}

\begin{rem}
 
Let $T$ be a left invertible $m$-concave operator in $\mathcal B(\mathcal H)$ with $m\geqslant 2.$
Note that for $e\in \ker T^*,$  we have
\begin{align}\label{mu-positive-diagonal}
\Big\langle\Big(\beta_r(T)-\sum_{k=1}^{\infty} {L_T^*}^k \beta_{r+1}(T) L_T^k\Big) T^n e, T^n e\Big\rangle &=\Big\langle\big({T^*}^n\beta_r(T)T^n-\sum_{k=1}^{n}{T^*}^{n-k} \beta_{r+1}(T)T^{n-k}\big)e, e\Big\rangle\nonumber\\
&=\Big\langle\big({T^*}^n\beta_r(T)T^n-\sum_{k=0}^{n-1}{T^*}^k \beta_{r+1}(T)T^k\big)e, e\Big\rangle\nonumber\\
&= \big\langle \beta_r(T) e,e\big\rangle, \quad r\in\mathbb N,\,\,n\in\mathbb Z_+.
\end{align}
Here the last equality follows from the relation
${T^*}^n\beta_r(T)T^n-\sum_{k=0}^{n-1}{T^*}^k \beta_{r+1}(T)T^k=\beta_r(T),$ which can easily be verified  by induction on $n$.
Further, suppose that $T$ is a unilateral weighted shift operator with non-zero weights 
(see \cite{SH} for definition and other basic properties of unilateral weighted shift). 
Then for any fixed non-zero vector $e\in\ker T^*,$ the set $\{T^ne:n\in\mathbb Z_+\}$ forms an orthogonal basis of $\mathcal H$. 
%
Note that with respect to this basis, the operator ${L_T^*}^k {T^*}^jT^j L_T^k$ is diagonal for each $j,\ k\in\mathbb Z_+$ and consequently the operator $ {L_T^*}^k \beta_{n}(T) L_T^k$ is also diagonal for each $n,k \in\mathbb Z_+.$ Hence in view of \eqref{mu-positive-diagonal}, it follows that if $\beta_r(T) \geqslant 0$ for $r=1,\ldots,m-2,$ then $\beta_r(T)\geqslant  \sum_{k=1}^{\infty} {L_T^*}^k \beta_{r+1}(T) L_T^k$ holds for $r=1,\ldots,m-2.$
Thus by an application of Lemma \ref{m-concaveop}, we obtain that for any left invertible $m$-concave unilateral weighted shift $T$ with $m\geqslant 2,$ the following two conditions are equivalent:
\begin{itemize}
\item[(i)] $\beta_r(T)\geqslant \sum_{k=1}^{\infty} {L_T^*}^k \beta_{r+1}(T) L_T^k$ for every $r=1,\ldots,m-2,$
\item[(ii)] $\beta_r(T)\geqslant 0$ for every $r=1,\ldots,m-2.$
\end{itemize}

\end{rem}

\section{Model for a class of $m$-isometries}
In this section, we obtain a model for a class of analytic $m$-isometries satisfying \eqref{mu-positive}. We start with a couple of lemmas which will be crucial for the proof of the main theorem of this section.

\begin{lemma}\label{measure construction vector valued}
Let $A$ and $T$ be two operators in $\mathcal B(\mathcal H)$ and $\mathcal E= \ker T^*.$ Suppose $A$ is positive and $T^*AT=A$. Then there exists a $\mathcal B(\mathcal E)$-valued semi-spectral measure $\mu$ on $\mathbb T$ such that  
\begin{align*}
 \langle AT^{l}x,T^{j}y\rangle = \int_{\mathbb T}\zeta^{l-j}d\mu_{x,y}(\zeta),\,\,\,\,l,j\in\mathbb Z_+,\,\,x,y\in\mathcal E.
\end{align*}
\end{lemma}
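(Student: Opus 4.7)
The plan is to recognize the data $\{\langle AT^l x, T^j y\rangle : l,j\in\mathbb Z_+,\, x,y\in\mathcal E\}$ as the Fourier moments of a prospective $\mathcal B(\mathcal E)$-valued semi-spectral measure $\mu$ and then to invoke the operator-valued Herglotz (trigonometric moment) theorem on $\mathbb Z$. The preparatory algebraic step is to iterate $T^*AT=A$ to obtain ${T^*}^t AT^t=A$ for every $t\in\mathbb Z_+$, which immediately yields the shift invariance
\[\langle AT^{l+t}x, T^{j+t}y\rangle = \langle {T^*}^tAT^t\,T^l x, T^j y\rangle = \langle AT^l x, T^j y\rangle,\]
so that $\langle AT^l x, T^j y\rangle$ depends only on $l-j$. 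We may therefore unambiguously define, for each $n\in\mathbb Z$, a sesquilinear form on $\mathcal E$ by $\Phi_n(x,y):=\langle AT^a x, T^b y\rangle$ for any $a,b\in\mathbb Z_+$ with $a-b=n$. The Cauchy--Schwarz inequality for $A\geqslant 0$ together with $\langle AT^a x, T^a x\rangle=\langle Ax,x\rangle$ gives $|\Phi_n(x,y)|^2 \leqslant \langle Ax,x\rangle\langle Ay,y\rangle \leqslant \|A\|^2\|x\|^2\|y\|^2$, so $\Phi_n$ is represented by a bounded operator in $\mathcal B(\mathcal E)$ (still denoted $\Phi_n$) with $\|\Phi_n\|\leqslant \|A\|$ and $\Phi_{-n}=\Phi_n^*$.

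The crux of the argument is operator-valued positive definiteness of $\{\Phi_n\}_{n\in\mathbb Z}$: for every $N\geqslant 0$ and $x_0,\ldots,x_N\in\mathcal E$,
\[\sum_{j,k=0}^{N} \langle \Phi_{j-k}x_k, x_j\rangle \geqslant 0.\]
The trick is to select the particular representatives $(a,b)=(N-k,\, N-j)$, which are nonnegative because $0\leqslant j,k\leqslant N$ and satisfy $a-b=j-k$. This yields the uniform identity
\[\langle \Phi_{j-k}x_k, x_j\rangle = \langle AT^{N-k}x_k,\, T^{N-j}x_j\rangle,\]
and the double sum telescopes to $\langle Au, u\rangle$ with $u:=\sum_{k=0}^N T^{N-k}x_k \in\mathcal H$, which is nonnegative since $A\geqslant 0$.

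With positive definiteness in hand, the conclusion is immediate from a classical operator-valued version of the Herglotz representation theorem: any bounded positive-definite sequence $\{\Phi_n\}_{n\in\mathbb Z}$ in $\mathcal B(\mathcal E)$ admits a unique representation $\Phi_n=\int_{\mathbb T}\zeta^n\, d\mu(\zeta)$ for some $\mu\in\mathcal M_+(\mathbb T,\mathcal B(\mathcal E))$. Evaluating on $x,y\in\mathcal E$ and invoking the reduction from the first step then gives
\[\int_{\mathbb T}\zeta^{l-j}\, d\mu_{x,y}(\zeta) = \langle \Phi_{l-j}x, y\rangle = \langle AT^l x, T^j y\rangle,\]
as required. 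I expect the only real conceptual point to be spotting the shift $(a,b)=(N-k,\,N-j)$ that makes the positive-definiteness computation factor through the positivity of $A$; once this is seen, the remaining algebra is bookkeeping built on the iterated identity ${T^*}^tAT^t=A$, and the final extraction of $\mu$ is a standard appeal to the operator Herglotz theorem.
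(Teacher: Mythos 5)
Your argument is correct, and it reaches the conclusion by a genuinely different route from the paper. You reduce the statement to the operator-valued Herglotz (trigonometric moment) theorem: you iterate $T^*AT=A$ to get ${T^*}^tAT^t=A$, deduce that $\langle AT^lx,T^jy\rangle$ depends only on $l-j$, and then verify positive definiteness of the resulting sequence $\{\Phi_n\}$ by the representative choice $(a,b)=(N-k,N-j)$, which collapses the double sum to $\langle Au,u\rangle$ with $u=\sum_k T^{N-k}x_k$ --- all of these steps check out, including the well-definedness of $\Phi_n$ and the bound $\|\Phi_n\|\leqslant\|A\|$ via Cauchy--Schwarz for the form induced by $A$. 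The paper instead works constructively: it observes that $T^*AT=A$ makes the map $A^{1/2}x\mapsto A^{1/2}Tx$ a well-defined isometry $S$ on $\overline{{\rm ran}(A^{1/2})}$, extends $S$ to a unitary $V$ on a larger space (citing Sz.-Nagy--Foias), and defines $\mu(\Delta)=P_{\mathcal E}A^{1/2}P_{\mathcal A}E(\Delta)A^{1/2}|_{\mathcal E}$ from the spectral measure $E$ of $V$; the moment identity then falls out of the functional calculus. The two proofs carry the same algebraic content --- your positive-definiteness computation is exactly what makes the paper's $S$ an isometry --- but they package it differently. Your version is shorter and avoids constructing any auxiliary space, at the price of invoking a heavier black box (the operator Herglotz theorem, whose standard proof is itself a Naimark-type dilation essentially identical to what the paper does by hand); the paper's version is self-contained modulo the unitary extension of isometries and yields an explicit formula for $\mu$. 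One cosmetic caveat: whether the representation reads $\Phi_n=\int\zeta^n\,d\mu$ or $\int\zeta^{-n}\,d\mu$ depends on the convention in the moment theorem you cite, but since replacing $\mu$ by its pushforward under $\zeta\mapsto\bar\zeta$ flips the sign, existence of the required $\mu$ is unaffected.
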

\begin{proof}
Let $\mathcal A$ be the subspace of $\mathcal H$ given by $\overline{{\rm ran}(A^{1/2})},$ the closure of the range of $A^{1/2}.$ Consider the operator $S$ on ${\rm ran}(A^{1/2})$ defined by $S(A^{1/2}x):= A^{1/2}Tx,$ for $x\in \mathcal H.$ 
Since  $T^*AT=A,$ it follows that $S$ extends to an isometry on $\mathcal A.$ 
By abuse of language, let it be denoted by $S$ itself. 
Suppose $V$ is a unitary extension of $S$ on some Hilbert space $\mathcal K$ containing $\mathcal A$ (see \cite[Proposition I.2.3]{NF}) and $E$ is the spectral measure associated to $V.$ 
Since $V$ is unitary, the support of $E$ is contained in $\mathbb T.$ Let $P_{\mathcal E}$  denote the orthogonal projection of $\mathcal H$ onto $\mathcal E$ and  $P_{\mathcal A}$ denote the orthogonal projection of $\mathcal K$ onto $\mathcal A.$  Now consider the $\mathcal B(\mathcal E)$-valued semi-spectral measure $\mu$ on $\mathbb T$ defined by 
\begin{align*}
\mu(\Delta):= P_{\mathcal E}A^{\frac{1}{2}}P_{\mathcal A}E(\Delta) A^{\frac{1}{2}}|_{\mathcal E},
\end{align*}
for all $\Delta$ in the Borel sigma algebra of $\mathbb T.$ Note that for every $x,y\in\mathcal E,$ we have 
\begin{align*}
\mu_{x,y}(\Delta)= \langle \mu(\Delta)x, y\rangle = \langle E(\Delta)A^{\frac{1}{2}}x, A^{\frac{1}{2}}y\rangle = E_{A^{\frac{1}{2}}x, A^{\frac{1}{2}}y}(\Delta).
\end{align*}
 Thus it follows that 
\begin{align*} 
\langle {V^*}^jV^{l}A^{\frac{1}{2}} x, A^{\frac{1}{2}}y \rangle= \int_{\mathbb T}\zeta^{l-j}d\mu_{x,y}(\zeta),\,\,\,\,l,j\in\mathbb Z_+,\,\,x,y\in\mathcal E.
\end{align*}
Since $V^n A^{\frac{1}{2}} x = S^n A^{\frac{1}{2}} x=  A^{\frac{1}{2}} T^nx,$ for every $x\in\mathcal E$ and $n\in\mathbb Z_+,$ we obtain that 
\begin{align*}
\langle  AT^l x, T^jy \rangle =\langle V^{l}A^{\frac{1}{2}} x, {V}^jA^{\frac{1}{2}}y \rangle= \int_{\mathbb T}\zeta^{l-j}d\mu_{x,y}(\zeta),\,\,\,\,l,j\in\mathbb Z_+,\,\,x,y\in\mathcal E.
\end{align*}
This completes the proof.
\end{proof}
The following lemma shows that the weighted Dirichlet integral  $D_{\mu,n}(\cdot)$ determines the semi-spectral measure $\mu.$  
For a scalar valued polynomial $p$ and $e\in\mathcal E,$ we use the notation $pe$ to denote the $\mathcal E$-valued polynomial given by
$pe(z)=p(z)e$ for every $z\in\mathbb C.$
\begin{lemma}\label{seminorm and measure}
Let $\mu$ and $\nu$ be two $\mathcal B(\mathcal E)$-valued semi-spectral measures on the unit circle $\mathbb T$  and $n\in\mathbb Z_+.$ Suppose $D_{\mu,n}(pe)= D_{\nu,n}(pe)$ for every scalar valued polynomial $p$ and for every $e\in \mathcal E$, then $\mu=\nu.$
\end{lemma}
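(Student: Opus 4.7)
The plan is to recover the Fourier coefficients of the scalar measure $\mu_{e,e}-\nu_{e,e}$ from the weighted Dirichlet integrals, and then invoke uniqueness of Fourier coefficients for complex regular Borel measures on $\mathbb T$, together with polarization. Since every $\mathcal E$-valued polynomial $pe$ lies in $\mathcal O(\overline{\mathbb D},\mathcal E)$, Lemma \ref{Rydhe diff vector valued}(i) gives, for $p(z)=\sum_j a_j z^j$ and $e\in\mathcal E$, the explicit formula
\begin{align*}
D_{\mu,n}(pe) = \sum_{k,l\geqslant n}\binom{k\wedge l}{n}\, a_k\overline{a_l}\,\bigl\langle\hat\mu(l-k)e,e\bigr\rangle,
\end{align*}
and analogously for $\nu$.

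First I would plug in the monomial $p(z)=z^{n+s}$, $s\geqslant 0$, for which the double sum collapses to the single term $\binom{n+s}{n}\langle\hat\mu(0)e,e\rangle$. The hypothesis immediately forces $\langle\hat\mu(0)e,e\rangle=\langle\hat\nu(0)e,e\rangle$ for every $e\in\mathcal E$. Next I would apply the formula to the two-term polynomial $p(z)=z^{n+s}+\alpha z^{n+s+t}$ for an arbitrary scalar $\alpha\in\mathbb C$ and an integer $t\geqslant 1$. Only four pairs $(k,l)$ contribute to the resulting double sum, and after subtracting the identity for $\nu$ from that for $\mu$ and cancelling the two diagonal terms using the already-established agreement at frequency $0$, the remaining equation reduces to
\begin{align*}
\bar\alpha\,\bigl\langle\bigl(\hat\mu(t)-\hat\nu(t)\bigr)e,e\bigr\rangle + \alpha\,\bigl\langle\bigl(\hat\mu(-t)-\hat\nu(-t)\bigr)e,e\bigr\rangle = 0.
\end{align*}
This must hold for every $\alpha\in\mathbb C$; specializing to $\alpha=1$ and $\alpha=i$ forces both coefficients to vanish. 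Thus $\langle\hat\mu(m)e,e\rangle=\langle\hat\nu(m)e,e\rangle$ for every $m\in\mathbb Z$, i.e.\ the complex measures $\mu_{e,e}$ and $\nu_{e,e}$ on $\mathbb T$ have identical Fourier coefficients, so $\mu_{e,e}=\nu_{e,e}$.

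Finally, I would use the polarization identity
\begin{align*}
4\,\mu_{x,y} = \mu_{x+y,x+y}-\mu_{x-y,x-y}+i\,\mu_{x+iy,x+iy}-i\,\mu_{x-iy,x-iy},
\end{align*}
together with the analogous identity for $\nu$, to conclude that $\mu_{x,y}=\nu_{x,y}$ for all $x,y\in\mathcal E$, hence $\mu=\nu$. There is no serious obstacle: the only point needing care is the bookkeeping of surviving terms in the double sum when one specializes to a two-term polynomial, after which the argument is a standard Fourier-uniqueness plus polarization routine.
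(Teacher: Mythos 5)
Your proof is correct, but it follows a genuinely different route from the paper's. The paper first collapses the order-$n$ hypothesis down to order $0$: using the forward-difference identity $\Delta^n D_{\mu,n}(f)=D_{\mu,0}(f)$ (a consequence of Proposition \ref{general diff. formula vector valued}), the equality $D_{\mu,n}(pe)=D_{\nu,n}(pe)$ for \emph{all} polynomials yields $D_{\mu,0}(pe)=D_{\nu,0}(pe)$; it then identifies $D_{\mu,0}(pe)=\int_{\mathbb T}|p(\zeta)|^2\,d\mu_{e,e}(\zeta)$ via the weak*-convergence of $P_{\mu_{e,e}}(R\zeta)\,d\sigma(\zeta)$ to $\mu_{e,e}$, polarizes in the polynomial variable to get $\int p\bar q\,d\mu_{e,e}=\int p\bar q\,d\nu_{e,e}$, and finally polarizes in $e$. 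You instead work directly with the explicit double-series formula of Lemma \ref{Rydhe diff vector valued}(i) and extract the Fourier--Stieltjes coefficients of $\mu_{e,e}-\nu_{e,e}$ by testing against monomials and two-term polynomials $z^{n+s}+\alpha z^{n+s+t}$; your bookkeeping of the four surviving terms is right, the nonvanishing of $\binom{n+s}{n}$ lets you cancel it, and the $\alpha=1$, $\alpha=i$ specialization correctly kills both off-diagonal coefficients. Your argument is more elementary and self-contained, needing only Lemma \ref{Rydhe diff vector valued}(i) plus uniqueness of Fourier coefficients for regular complex Borel measures, and it bypasses both the difference identity and the Poisson-kernel weak* limit. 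What the paper's route buys is the intermediate identity $D_{\mu,0}(pe)=\int_{\mathbb T}|p(\zeta)|^2\,d\mu_{e,e}(\zeta)$, which is conceptually transparent and reused implicitly elsewhere (e.g.\ in Proposition \ref{Uniqueness}, where $D_{\mu_r,0}$ is exactly the quantity matched between the two models).
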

\begin{proof}
Let $D_{\mu,n}(pe)= D_{\nu,n}(pe)$ for every scalar valued polynomial $p$ and for each $e\in\mathcal E.$ In view of Proposition \ref{general diff. formula vector valued}, we find that $D_{\mu,0}(pe)= D_{\nu,0}(pe)$ for every scalar valued polynomial $p$ and $e\in\mathcal E.$ Now observe that
\begin{align*}
D_{\mu,0}(pe)= \lim_{R\to 1}\displaystyle\int_{\mathbb T} |p(R\zeta)|^2 P_{\!\mu_{e,e}}(R\zeta) d\sigma(\zeta).
\end{align*}
It is well-known that for a positive measure $m\in \mathcal M_+(\mathbb T),$ the measure $P_m(R\zeta)d\sigma(\zeta)$ converges to $m$ in weak*-topology as $R\to 1$ (see \cite[Theorem 3.3.4]{Ru}). Consequently, by continuity of $p$ on $\overline{\mathbb D},$ we obtain that
\begin{align*}
D_{\mu,0}(pe) = \displaystyle\int_{\mathbb T} |p(\zeta)|^2d\mu_{e,e}(\zeta).
\end{align*}
By our assumption along with the polarization identity, we find that for any scalar valued polynomials $p$ and $q,$ 
\begin{align*}
\displaystyle\int_{\mathbb T} p(\zeta) \overline{q(\zeta)}d\mu_{e,e}(\zeta)= \displaystyle\int_{\mathbb T} p(\zeta) \overline{q(\zeta)}d\nu_{e,e}(\zeta).
\end{align*} 
Thus it follows that $\mu_{e,e}=\nu_{e,e},$ that is $\langle\mu(\Delta)e,e\rangle= \langle\nu(\Delta)e,e\rangle$ for every Borel subset $\Delta\subseteq \mathbb T$ and $e\in\mathcal E.$ Hence we  get that $\mu(\Delta)= \nu(\Delta)$ for every Borel subset $\Delta\subseteq \mathbb T.$ This completes the proof.   
\end{proof}

Before we state the main result of this section, in the following proposition, we prove that any $(m-1)$-tuple of $\mathcal B(\mathcal E)$-valued semi-spectral measures $\pmb \mu$ on $\mathbb T$ determines the associated unitary equivalence class of multiplication operator $M_z$ on $\mathcal H_{\pmb\mu}(\mathcal E)$ spaces. This generalizes the result in \cite[Theorem 4.2]{OLLOF2iso} by Olofsson for the case of $m=2.$
\begin{prop}\label{Uniqueness}
Let $m\geqslant 2,$ and $\mathcal E$ and $\mathcal F$ be complex separable Hilbert spaces. Let $\pmb{\mu}=(\mu_1,\ldots,\mu_{m-1})$ and $\pmb{\nu}=(\nu_1,\ldots,\nu_{m-1})$ be two $(m-1)$-tuples of measures in $\mathcal M_+(\mathbb T,\mathcal B(\mathcal E))$ and $\mathcal M_+(\mathbb T,\mathcal B(\mathcal F))$ respectively. Then the operators $M_z$ on $\mathcal H_{\pmb{\mu}}(\mathcal E)$ and $M_z$ on $\mathcal H_{\pmb{\nu}}(\mathcal F)$ are unitarily equivalent if and only if there exists a unitary $V$ in $\mathcal B(\mathcal E,\mathcal F)$ such that $\mu_r(\Delta)=V^*\nu_r(\Delta)V$ for every Borel subset $\Delta \subseteq \mathbb T$ and for all $r=1,\ldots,m-1$.
\end{prop}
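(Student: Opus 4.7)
The plan is to prove the two directions separately.

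\textbf{Sufficiency.} Suppose $\mu_r = V^* \nu_r V$ for some unitary $V \in \mathcal B(\mathcal E, \mathcal F)$. Integrating against the Poisson kernel yields $P_{\mu_r}(z) = V^* P_{\nu_r}(z) V$ for every $z \in \mathbb D$. Define $U : \mathcal H_{\pmb{\mu}}(\mathcal E) \to \mathcal H_{\pmb{\nu}}(\mathcal F)$ by $(Uf)(z) := V f(z)$. Since $(Uf)^{(r)}(z) = V f^{(r)}(z)$, a direct substitution gives $D_{\nu_r, r}(Uf) = D_{\mu_r, r}(f)$ for each $r$, while $\|Uf\|_{H^2(\mathcal F)} = \|f\|_{H^2(\mathcal E)}$ since $V$ is unitary. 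Thus $U$ is a well-defined isometry; the map $g \mapsto V^* g$ is its inverse, so $U$ is unitary, and evidently $U M_z = M_z U$.

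\textbf{Necessity.} Let $U : \mathcal H_{\pmb{\mu}}(\mathcal E) \to \mathcal H_{\pmb{\nu}}(\mathcal F)$ be a unitary intertwining the two multiplication operators. By \eqref{cokernel vector valued}, the cokernels of the two copies of $M_z$ are identified with $\mathcal E$ and $\mathcal F$ (as constants), so $U$ restricts to a unitary $V := U|_{\mathcal E} : \mathcal E \to \mathcal F$. The intertwining forces $U(z^k e) = z^k (Ve)$ for every $k \in \mathbb Z_+$ and $e \in \mathcal E$, so $U$ acts as pointwise application of $V$ on the set of $\mathcal E$-valued polynomials (dense by Corollary \ref{poly-dense vector valued}). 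Taking adjoints, $U$ also intertwines $M_z^*$, hence intertwines both $\beta_r(M_z)$ and $L_{M_z} = (M_z^* M_z)^{-1} M_z^*$.

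The final task is to extract each $\mu_r$ individually, and Theorem \ref{defect formula for model space vector valued} is tailor-made for this. Since both multiplication operators are $m$-isometries we have $\beta_m(M_z) = 0$; applying the telescoping identity \eqref{Q-j and Q_j+1 vector valued} simultaneously on both spaces and using the intertwinings above,
\begin{align*}
D_{\mu_r, 0}(f) & = \langle \beta_r(M_z) f, f\rangle - \sum_{n=1}^{\infty} \langle \beta_{r+1}(M_z) L_{M_z}^n f, L_{M_z}^n f\rangle \\
& = \langle \beta_r(M_z) Uf, Uf\rangle - \sum_{n=1}^{\infty} \langle \beta_{r+1}(M_z) L_{M_z}^n Uf, L_{M_z}^n Uf\rangle \\
& = D_{\nu_r, 0}(Uf)
\end{align*}
for every $f \in \mathcal H_{\pmb{\mu}}(\mathcal E)$ and every $r = 1, \ldots, m-1$ (with $M_z$ and $L_{M_z}$ interpreted on the appropriate space in each line). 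Specializing $f = pe$ for a scalar polynomial $p$ and $e \in \mathcal E$ gives $Uf = p \cdot Ve$, hence
\[
\int_{\mathbb T} |p(\zeta)|^2 \, d\langle \mu_r(\zeta) e, e\rangle = \int_{\mathbb T} |p(\zeta)|^2 \, d\langle \nu_r(\zeta) Ve, Ve\rangle,
\]
exactly as computed in the proof of Lemma \ref{seminorm and measure}. Standard polynomial approximation on $\mathbb T$ forces $\langle \mu_r(\cdot) e, e\rangle = \langle \nu_r(\cdot) Ve, Ve\rangle$ as Borel measures, and polarization in $e$ then yields $\mu_r(\Delta) = V^* \nu_r(\Delta) V$ for every Borel set $\Delta \subseteq \mathbb T$. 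The main subtlety is that one might at first expect a descending induction on $r$ (since \eqref{Q-j and Q_j+1 vector valued} couples $\mu_r$ to higher-index terms through $\beta_{r+1}$); but the unitary equivalence eliminates the remainder sum \emph{en bloc}, so a single comparison of zero-order Dirichlet integrals suffices for each $r$.
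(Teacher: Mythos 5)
Your proposal is correct and follows essentially the same route as the paper: identify $V=U|_{\mathcal E}$ via the cokernel identification \eqref{cokernel vector valued}, use the intertwining of $\beta_r(M_z)$ and $L_{M_z}$ together with the identity \eqref{Q-j and Q_j+1 vector valued} from Theorem \ref{defect formula for model space vector valued} to deduce $D_{\mu_r,0}(pe)=D_{\nu_r,0}(p(Ve))$, and then recover the measures (the paper delegates this last step to Lemma \ref{seminorm and measure}, which you reproduce inline), with the converse handled by the same composition operator induced by $V$. No gaps; your closing remark that the remainder sums cancel \emph{en bloc} without a descending induction is exactly how the paper's argument works as well.
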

\begin{proof}
Let $M_z^{(\pmb{\mu})}$ and $M_z^{(\pmb{\nu})}$ denote the multiplication operators on the Hilbert spaces $\mathcal H_{\pmb{\mu}}(\mathcal E)$ and  $\mathcal H_{\pmb{\nu}}(\mathcal F)$ respectively. 
Suppose that $M_z^{(\pmb{\mu})}$ and $M_z^{(\pmb{\nu})}$ are unitarily equivalent. Then there exists a unitary operator $U:\mathcal H_{\pmb{\mu}}(\mathcal E)\to \mathcal H_{\pmb{\nu}}(\mathcal F)$ such that $UM_z^{(\pmb{\mu})}=M_z^{(\pmb{\nu})}U.$ Since 
$\ker {M_z^{(\pmb{\mu})}}^*= \mathcal E$ and $\ker {M_z^{(\pmb{\nu})}}^* = \mathcal F,$ 
it follows that $U^*(\mathcal F)=\mathcal E$ and consequently we have $U(\mathcal E)=\mathcal F.$ 
Let $V$ be the unitary operator in $\mathcal B(\mathcal E,\mathcal F)$ given by $V= U|_{\mathcal E}.$ Since $UM_z^{(\pmb{\mu})}=M_z^{(\pmb{\nu})}U,$ for any scalar valued polynomial $p$ and $e$ in $\mathcal E,$ $U(pe)=Up(M_z^{(\pmb{\mu})})(e)=p(M_z^{(\pmb{\nu})})(Ue)=p(Ve)
.$
 Also, for $r=1,\ldots,m-1$, 
by a routine verification, we see that 
\begin{align*}
U\Big(\beta_r(M_z^{(\pmb{\mu})})- \sum_{k=1}^{\infty} {L^*}_{\!\!\!\!M_z^{(\pmb{\mu})}}^{k}\beta_{r+1}(M_z^{(\pmb{\mu})}) L_{\!M_z^{(\pmb{\mu})}}^k \Big)=\Big(\beta_r(M_z^{(\pmb{\nu})})- \sum_{k=1}^{\infty} {L^*}_{\!\!\!\!M_z^{(\pmb{\nu})}}^{k}\beta_{r+1}(M_z^{(\pmb{\nu})}) L_{\!M_z^{(\pmb{\nu})}}^k \Big)U.
\end{align*} 
Hence by Theorem \ref{defect formula for model space vector valued}, it follows that 
$D_{\mu_r,0}(f)=D_{\nu_r,0}(Uf)$ for all $f$ in $\mathcal H_{\pmb{\mu}}(\mathcal E)$, and consequently, we obtain that $D_{\mu_r,0}(pe)=D_{\nu_r,0}(p(Ve))$ for any polynomial $p$ and $e\in \mathcal E.$ 
Note that $V^*\nu_r V\in \mathcal M_+(\mathbb T, \mathcal B(\mathcal E))$ and 
\begin{align*}
D_{\nu_r,0}(p(Ve))= D_{V^*\nu_r V,0}(pe),
\end{align*}
for every polynomial $p$ and $r=1,\ldots,m-1.$
Thus in view of Lemma \ref{seminorm and measure}, we conclude that $\mu_r(\Delta)=V^*\nu_r(\Delta)V$ for every Borel subset $\Delta \subseteq \mathbb T$ and for all $r=1,\ldots,m-1.$

For the reverse implication, 
suppose $V$ is a unitary map in $\mathcal B(\mathcal E,\mathcal F)$ such that $\mu_r(\cdot)=V^*\nu_r(\cdot)V$ for $r=1,\ldots,m-1.$ 
Note that $P_{\!\mu_r}(z)=V^*P_{\!\nu_r}(z)V$ for every $z\in\mathbb D$ and for each $r=1,\ldots,m-1.$ The map $V$ induces a linear map $U$ from $\mathcal H_{\pmb{\mu}}(\mathcal E)$ into $\mathcal H_{\pmb{\nu}}(\mathcal F)$ given by
$(Uf)(z)= V(f(z))$ for $f\in \mathcal H_{\pmb{\mu}}(\mathcal E)$ and $z\in\mathbb D.$  It is straightforward to verify that $U$ is unitary from $\mathcal H_{\pmb{\mu}}(\mathcal E)$ onto $\mathcal H_{\pmb{\nu}}(\mathcal F)$ satisfying $UM_z^{(\pmb{\mu})}=M_z^{(\pmb{\nu})}U.$ 
\end{proof}

Now we are ready to prove the main theorem of this section, namely the Theorem \ref{model thm}. This provides a canonical model for the class of analytic $m$-isometries satisfying \eqref{mu-positive} and generalizes \cite[Theorem 5.1]{R} and \cite[Theorem 4.1]{OLLOF2iso}. Note that, if $T$ is an $m$-isometry then the approximate point spectrum $\sigma_{ap}(T)$ is a subset of $\mathbb T$ (see \cite[Lemma 1.21]{AglerStan1}). As $0\notin \sigma_{ap}(T),$ it follows that an $m$-isometry $T$ is always left invertible.
\begin{proof}[\textbf{Proof of Theorem \ref{model thm}}]
The backward implication follows directly from Theorem \ref{bounded m iso vector valued} and Theorem \ref{defect formula for model space vector valued}. 

For the forward implication assume that $T$ is an analytic $m$-isometry satisfying \eqref{mu-positive}. 
From Theorem \ref{Wold-type-mconcave}, we obtain that $T$ has the wandering subspace property. Thus the linear span of $\big\{T^n \mathcal E:n\in\mathbb Z_+\big\}$ is dense in $\mathcal H,$ where $\mathcal E= \ker T^*.$  
Also note that $T^*\beta_{m-1}(T)T= \beta_{m-1}(T).$ From Lemma \ref{m-concaveop}, it follows that $\beta_{m-1}(T) \geqslant 0.$ Moreover, using the relation $L_T T=I$ together with $T^*\beta_r(T)T-\beta_{r+1}(T)= \beta_r(T),$ it follows that for each $r=1,\ldots, m-1,$
\begin{align*}
T^*\Big(\beta_r(T)- \sum_{k=1}^{\infty} {L_T^*}^k\beta_{r+1}(T) L_T^k\Big) T& =\beta_r(T)+\beta_{r+1}(T)- \sum_{k=1}^{\infty} T^*{L_T^*}^k\beta_{r+1}(T) L_T^kT\\
&= \beta_r(T)- \sum_{k=1}^{\infty} {L_T^*}^k\beta_{r+1}(T) L_T^k.
\end{align*} 
For $r=1,\ldots, m-1,$ since the operator $\beta_r(T)- \sum_{k=1}^{\infty} {L_T^*}^k\beta_{r+1}(T) L_T^k$ is positive by hypothesis, as an application of Lemma \ref{measure construction vector valued}, there exists an $(m-1)$-tuple of $\mathcal B(\mathcal E)$-valued semi-spectral measures $\pmb \mu =(\mu_1,\ldots,\mu_{m-1})$  on $\mathbb T$ such that 
\begin{align}\label{difference-identity}
\Big\langle \Big(\beta_r(T)- \sum_{k=1}^{\infty} {L_T^*}^k\beta_{r+1}(T) L_T^k \Big)T^l(x),T^j(y)\Big \rangle = \int _{\mathbb T} \zeta^{l-j}d{(\mu_r)}_{x,y}(\zeta), \quad j,l\in\mathbb Z_+,\quad x,y\in\mathcal E.
\end{align} 
We claim that $T$ is unitarily equivalent to the multiplication operator $M_z$ by the coordinate function on $\mathcal H_{\pmb\mu}(\mathcal E).$
In view of Theorem \ref{defect formula for model space vector valued} together with the polarization identity and \eqref{difference-identity}, we have 
\begin{align}\label{compare of defect}
\langle \beta_{m-1}(M_z) z^lx, z^jy\rangle = \int _{\mathbb T} \zeta^{l-j}d{(\mu_{m-1})}_{x,y}(\zeta) =\langle \beta_{m-1}(T) T^lx, T^jy \rangle,\,\,\,j,l\in\mathbb Z_+,\,\,\, x,y\in\mathcal E.
\end{align}
Let $e\in\mathcal E.$ Since $L_TT=I$ and $L_T(e)=0,$ we get that  
\begin{eqnarray*}
L_T^n T^j(e)=
\begin{cases}
T^{j-n} e & \mbox{ if } n\leqslant j,\\
0 & \mbox{ if } n>j.
\end{cases}
\end{eqnarray*}
In a similar manner, we also have 
\begin{eqnarray*}
L_{M_z}^n z^je=
\begin{cases}
z^{j-n}e & \mbox{ if } n\leqslant j,\\
0 & \mbox{ if } n>j.
\end{cases}
\end{eqnarray*}
This combined with \eqref{compare of defect} gives 
\begin{align}\label{compare}
\Big \langle \beta_{m-1}(M_z) {L^k_{M_z}} z^lx,{L^k_{M_z}}z^jy\Big \rangle = \Big\langle \beta_{m-1}(T) {L^k_T} T^lx,{L^k_T}T^jy 
\Big\rangle,\,\,\,k,l,j\in\mathbb Z_+,\,x,y\in\mathcal E.
\end{align}
Again using Theorem \ref{defect formula for model space vector valued} along with the polarization identity and \eqref{difference-identity}, one obtains
\begin{align*}
\Bigg\langle \Big(\beta_r(M_z)- \sum_{k=1}^{\infty} {L_{M_z}^*}^k\beta_{r+1}(M_z) L_{M_z}^k \Big)z^lx,z^jy\Bigg \rangle
=\Bigg\langle \Big(\beta_r(T)- \sum_{k=1}^{\infty} {L_T^*}^k\beta_{r+1}(T) L_T^k \Big)T^lx,T^jy\Bigg \rangle, 
 \end{align*} 
 for every $j,l\in\mathbb Z_+,$ and $r=1,\ldots,m-2.$ Using this together with \eqref{compare}, inductively, we have 
 \begin{align*}
 \big\langle \beta_{r}(M_z) z^lx, z^jy\big\rangle  =\big\langle \beta_{r}(T) T^lx, T^jy \big\rangle,\,\,\,j,l\in\mathbb Z_+,\,x,y\in\mathcal E,
 \end{align*} for every $r=m-1,\ldots,1.$ From the case of $r=1,$ it follows that 
 \begin{align*}
 \big\langle z^{l+1}x,z^{j+1}y\big\rangle - \big\langle z^{l}x,z^{j}y\big\rangle = \big\langle T^{l+1} x,T^{j+1} y\big\rangle - \big\langle T^{l} x,T^{j} y \big\rangle\,\,\,j,l\in\mathbb Z_+,\,x,y\in\mathcal E. 
 \end{align*}
We also have $\langle x,y\rangle_{\pmb \mu}= \langle x,y\rangle_{\!_{\mathcal H}}$ and $\langle z^lx,y\rangle = \langle T^l x,y\rangle=0$ for every $l\geqslant 1$ and $x,y\in\mathcal E.$ Hence inductively we obtain that 
\begin{align}\label{Grammian match}
\big\langle z^{l}x,z^{j}y\big\rangle =  \big\langle T^{l} x,T^{j} y \big\rangle, \quad j,l\in\mathbb Z_+,\,x,y\in\mathcal E. 
\end{align}
Now consider the map $U$ defined on the linear span of $\big\{T^n \mathcal E:n\in\mathbb Z_+\big\}$ given by 
\begin{align*}
U\Big(\sum\limits_{j=0}^k T^j x_j \Big):=\sum\limits_{j=0}^k z^jx_j, \quad x_0,\ldots,x_k\in\mathcal E.
\end{align*}
From \eqref{Grammian match}, it follows that $U$ is an isometry from the linear span of $\big\{T^n \mathcal E:n\in\mathbb Z_+\big\}$ 
onto the set of all $\mathcal E$-valued polynomials in $\mathcal H_{\pmb\mu}(\mathcal E)$ and the equality $UT=M_zU$ holds on the linear span of $\big\{T^n \mathcal E:n\in\mathbb Z_+\big\}.$  
Since $\mathcal E$-valued polynomials are dense in $\mathcal H_{\pmb\mu}(\mathcal E)$ (by Corollary \ref{poly-dense vector valued}), 
the map $U$ extends as a unitary map from $\mathcal H$ onto $\mathcal H_{\pmb\mu}(\mathcal E)$ satisfying $UT=M_zU.$ 
This completes the proof.
\end{proof}

We conclude this section by showing that the adjoint of every analytic $m$-isometry with $n$ dimensional kernel,  $n\in \mathbb N$, lies in the class $B_n(\mathbb D)$ of  Cowen-Douglas class operators associated to the open unit disc $\mathbb D,$ see \cite{CD} for the definition and other properties. Similar kinds of spectral behaviour for expansive analytic $m$-isometries have been studied in \cite[Lemma 2.6 and Corollary 3.4]{Sameeriso}.

\begin{prop}\label{CD Class}
Let $T \in \mathcal B(\mathcal H)$ be an analytic $m$-isometry with $\dim(\ker T^*)=n,$ for some positive integers $m$ and $n.$ Then it follows that
\begin{itemize}
\item[(i)] $\sigma(T)=\overline{\mathbb D}$ and $\sigma_{ap}(T)=\mathbb T,$
\item[(ii)] $T^*\in B_n(\mathbb D),$
\end{itemize}
where $\sigma (T)$ and $\sigma_{ap}(T)$ denote the spectrum and the approximate point spectrum of the operator $T$ respectively. 
\end{prop}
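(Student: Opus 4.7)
The plan is to handle (i) via standard spectral arguments for $m$-isometries and (ii) by verifying the four defining conditions of the Cowen--Douglas class $B_n(\mathbb D)$ for $T^*$, with the spanning condition being the principal obstacle.

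For (i), I would first invoke the Agler--Stankus result that $\sigma_{ap}(T)\subseteq \mathbb T$ for any $m$-isometry (already used in the proof of Theorem \ref{model thm}); in particular $0\notin\sigma_{ap}(T)$, so $T$ is bounded below and hence left invertible. The $m$-isometry identity $\|T^kx\|^2=\sum_{j=0}^{m-1}\binom{k}{j}\langle\beta_j(T)x,x\rangle$ (valid because $\beta_m(T)=0$) yields $\|T^k\|=O(k^{(m-1)/2})$ and so $r(T)\leqslant 1$, i.e.\ $\sigma(T)\subseteq\overline{\mathbb D}$. Since $\dim\ker T^*=n\geqslant 1$, the operator $T$ fails to be surjective, hence $0\in\sigma(T)$. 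Because $\partial\sigma(T)\subseteq\sigma_{ap}(T)\subseteq\mathbb T$ and $\sigma(T)$ is closed, the connectedness of $\mathbb D$ forces $\mathbb D\subseteq\sigma(T)$, giving $\sigma(T)=\overline{\mathbb D}$; consequently $\mathbb T=\partial\sigma(T)\subseteq\sigma_{ap}(T)\subseteq\mathbb T$, and $\sigma_{ap}(T)=\mathbb T$.

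For (ii) I would verify the four Cowen--Douglas conditions in turn. First, $\mathbb D\subseteq\sigma(T^*)=\overline{\mathbb D}$ by (i). Second, for $w\in\mathbb D$ the operator $T-\bar w$ is bounded below (as $\bar w\notin\mathbb T=\sigma_{ap}(T)$), so by the closed range theorem $T^*-w$ is surjective. Third, $T$ itself is Fredholm with $\ker T=\{0\}$, closed range, and $\dim\ker T^*=n$, so $\operatorname{ind}(T)=-n$; for $\lambda\in\mathbb D$ the operator $T-\lambda$ is upper semi-Fredholm, and the set $\{\lambda\in\mathbb D:T-\lambda\text{ is Fredholm of index }-n\}$ is open by stability of the semi-Fredholm index and closed in $\mathbb D$ since any norm limit of such operators that is itself upper semi-Fredholm (which $T-\lambda_0$ is, for $\lambda_0\in\mathbb D$) inherits the same finite index. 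By connectedness this set equals $\mathbb D$, so $\dim\ker(T^*-w)=\dim\operatorname{coker}(T-\bar w)=n$ for every $w\in\mathbb D$.

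The hard part will be the spanning condition $\bigvee_{w\in\mathbb D}\ker(T^*-w)=\mathcal H$; this is where analyticity of $T$ enters decisively. Let $\mathcal M$ be this closed linear span, so $\mathcal M^{\perp}=\bigcap_{w\in\mathbb D}\operatorname{ran}(T-\bar w)$ (ranges being closed). Given $x\in\mathcal M^{\perp}$, for each $w\in\mathbb D$ there is a unique $y_w$ with $(T-\bar w)y_w=x$, explicitly $y_w=((T-\bar w)^*(T-\bar w))^{-1}(T-\bar w)^*x$, and this formula exhibits $y_w$ as an antiholomorphic function of $w$ in a neighborhood of $0$ (since $T-\bar w$ is bounded below there). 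Taylor-expanding $y_w=\sum_{k\geqslant 0}\bar w^k c_k$ at $w=0$ and matching coefficients in $Ty_w-\bar w y_w=x$ forces $Tc_0=x$ and $Tc_k=c_{k-1}$ for $k\geqslant 1$, hence $T^{k+1}c_k=x$ for every $k\geqslant 0$. Therefore $x\in\bigcap_{n\geqslant 1}T^n(\mathcal H)=\mathcal H_{\infty}(T)=\{0\}$ by analyticity of $T$, so $\mathcal M^{\perp}=\{0\}$, completing the verification. The subtle point I would take care to justify is the antiholomorphic dependence of $y_w$, which is what converts the abstract condition $x\in\bigcap_w \operatorname{ran}(T-\bar w)$ into the Taylor-coefficient tower that meets the hyper-range.
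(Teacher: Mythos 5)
Your proof is correct, and parts (i) and the constancy of $\dim\ker(T^*-w)$ run along essentially the same lines as the paper (Agler--Stankus for $\sigma_{ap}(T)\subseteq\mathbb T$, boundary-of-spectrum, semi-Fredholm index continuity); your extra step bounding $\|T^k\|$ polynomially to get $r(T)\leqslant 1$ is harmless but unnecessary, since $\partial\sigma(T)\subseteq\mathbb T$ already forces $\sigma(T)\subseteq\overline{\mathbb D}$. Where you genuinely diverge is the spanning condition. The paper invokes the Cowen--Douglas machinery: a local holomorphic frame for $\ker(T^*-zI)$ near $0$ (their Proposition 1.11), the computation $\dim\ker{T^*}^k=kn$, and the fact (from their Lemma 1.22) that the derivatives of the frame at $0$ span $\ker{T^*}^k$, so that $\bigvee_k\ker{T^*}^k\subseteq\bigvee_w\ker(T^*-wI)$ and analyticity finishes via $\bigl(\bigvee_k\ker{T^*}^k\bigr)^{\perp}=\mathcal H_{\infty}(T)$. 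You instead identify $\mathcal M^{\perp}$ with $\bigcap_w\operatorname{ran}(T-\bar w)$ and push any such $x$ into the hyper-range by Taylor-expanding the local solution $y_w$; this is more self-contained (no citation of the local cross-section results) at the cost of the regularity argument for $y_w$. Both proofs bottom out in the same place: analyticity of $T$.

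One point in your argument deserves more care than you give it: the formula $y_w=\bigl((T-\bar w)^*(T-\bar w)\bigr)^{-1}(T-\bar w)^*x$ does \emph{not} by itself exhibit $y_w$ as antiholomorphic, since it manifestly involves both $w$ and $\bar w$. What it does give is real-analyticity of $w\mapsto y_w$ on $\mathbb D$ (inversion is analytic on the invertibles, and $(T-\bar w)^*(T-\bar w)$ is a polynomial in $w,\bar w$). Antiholomorphy then follows by applying $\partial/\partial w$ to the identity $(T-\bar w)y_w=x$, which yields $(T-\bar w)\,\partial_w y_w=0$, and using that $T-\bar w$ is injective; alternatively one can expand $y_w=\sum_{j,k}w^j\bar w^k c_{jk}$ and check that coefficient matching kills every $c_{jk}$ with $j\geqslant 1$. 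With that repair the tower $Tc_0=x$, $Tc_k=c_{k-1}$, hence $x\in\bigcap_k T^k(\mathcal H)=\{0\}$, goes through exactly as you describe.
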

\begin{proof}
Since $T$ is an $m$-isometry, it follows that $\sigma_{ap}(T)\subseteq \mathbb T$ (see \cite[Lemma 1.21]{AglerStan1}). Thus $T-\lambda I$ is bounded below for every $\lambda \in\mathbb D.$ This gives us that $T-\lambda I$ is semi-Fredholm for every $\lambda \in\mathbb D.$ Since $\ker T^*$ is $n$ dimensional, using continuity of the semi-Fredholm index, we conclude that $\dim \big(\ker(T^*-\lambda I)\big)=n,$ for every $\lambda\in \mathbb D.$ Since the boundary of spectrum of an operator  is contained in the approximate point spectrum (see \cite[Ch.7,6.7]{Con}), it follows that $\sigma(T)=\overline{\mathbb D}$ and $\sigma_{ap}(T)=\mathbb T.$ 
In order to show that $T^*\in B_n(\mathbb D),$ it is sufficient to show that 
\begin{align*}
\bigvee \big\{\ker (T^*-\lambda I): \lambda \in \mathbb D\big\} = \mathcal H.
\end{align*} Note that $(T^*-\lambda I)$ is Fredholm for every $\lambda \in \mathbb D$ with Fredholm index equal to $n.$ 
Now following \cite[Proposition 1.11]{CD}, we obtain that there exist holomorphic $\mathcal H$-valued functions $\{e_i(z): i=1,\ldots,n\}$ defined on some neighborhood $\Omega $ of $0$ such that $\{e_1(z),\ldots,e_n(z)\}$ forms a basis for $\ker(T^*-zI)$ for every $z\in \Omega.$ 
As $T$ is bounded below, it follows that $T^k$ is also bounded below for every $k\in\mathbb N.$ Now from general properties of Fredholm index, see \cite[Ch.11, 3.7]{Con}, we obtain that $\dim(\ker{T^*}^k)=kn,$ for every $k\in\mathbb N.$  Furthermore, from the proof of \cite[Lemma 1.22]{CD}, one may infer that
\begin{align*}
\text{span}\big\{e_1(0),\ldots,e_n(0),\ldots,e_1^{(k-1)}(0),\ldots,e_n^{(k-1)}(0)\big\}= \ker {T^*}^k,\quad k\in\mathbb N.
\end{align*}
Thus, it follows that $\ker{T^*}^k$ is contained in $\bigvee \big\{\ker (T^*-\lambda I): \lambda \in \mathbb D\big\}$ for every $k\in\mathbb N.$ Note that the hyper-range $\mathcal H_{\infty}(T)$ of $T$ is given by
\begin{align*}
\bigg(\bigvee\big\{\ker{T^*}^k:k\in\mathbb N\big\}\bigg)^{\perp}= \mathcal H_{\infty}(T).
\end{align*}
As $T$ is analytic by assumption, we obtain that $\bigvee \{\ker (T^*-\lambda I): \lambda \in \mathbb D\} = \mathcal H.$
\end{proof}

In view of Theorem \ref{bounded m iso vector valued} and Proposition \ref{CD Class} together with  \eqref{cokernel vector valued}, the following corollary is now immediate. The result in \cite[Corollary 3.8(b,c)]{R} can be seen as a special case of the following corollary.
\begin{cor}
Let $\mathcal E$ be a complex Hilbert space of dimension $n$ for some $n\in\mathbb N$ and $\pmb{\mu}=(\mu_1,\ldots,\mu_{m-1})$ be an $(m-1)$-tuple of semi-spectral measures on $\mathbb T.$ The operator $M_z$ on $\mathcal H_{\pmb\mu}(\mathcal E)$ has the following properties:
\begin{itemize}
\item[(i)] $\sigma(M_z)=\overline{\mathbb D}$ and $\sigma_{ap}(M_z)=\mathbb T.$
\item[(ii)] $M_z^*\in B_n(\mathbb D).$
\end{itemize}
\end{cor}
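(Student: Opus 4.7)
The plan is to combine three previously established ingredients and observe that the corollary is an immediate consequence. First, I would invoke Theorem \ref{bounded m iso vector valued} to guarantee that the multiplication operator $M_z$ on the weighted Dirichlet-type space $\mathcal H_{\pmb\mu}(\mathcal E)$ is a bounded analytic $m$-isometry. This places $M_z$ precisely in the hypothesis class of Proposition \ref{CD Class}.

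Next, I would identify the kernel of $M_z^*$. By the decomposition $f(z) = f(0) + zg(z)$ valid for every $f\in \mathcal H_{\pmb\mu}(\mathcal E)$ (a consequence of Lemma \ref{Contractivity of L vector valued}(ii)) together with orthogonality of the constants to $z\mathcal H_{\pmb\mu}(\mathcal E)$, equation \eqref{cokernel vector valued} gives $\ker M_z^* = \mathcal E$. Since $\dim \mathcal E = n$ by hypothesis, we have $\dim \ker M_z^* = n$.

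With these two observations in hand, Proposition \ref{CD Class} applies directly to the operator $T = M_z$, yielding at once $\sigma(M_z) = \overline{\mathbb D}$, $\sigma_{ap}(M_z) = \mathbb T$, and $M_z^* \in B_n(\mathbb D)$. Since the entire argument reduces to three citations, there is no genuine obstacle; the only point worth briefly checking is that the dimension count from \eqref{cokernel vector valued} indeed matches the integer $n$ appearing in the Cowen-Douglas index, which is immediate. Hence both conclusions (i) and (ii) follow without further computation.
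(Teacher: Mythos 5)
Your argument is correct and matches the paper's own proof exactly: the paper likewise derives the corollary as an immediate consequence of Theorem \ref{bounded m iso vector valued}, equation \eqref{cokernel vector valued}, and Proposition \ref{CD Class}. Nothing further is needed.
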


\medskip \textit{Acknowledgement}.
The authors are grateful to Sameer Chavan for his constant support and many  valuable suggestions in the preparation of this article. The authors also wish to express their sincere thanks to Shailesh Trivedi for several comments and fruitful discussions.
We sincerely thank the anonymous referee for several constructive comments which improved the presentation of this article.

\end{document}